\theoremstyle{plain}
\newtheorem{thm}{Theorem}[section]
\newtheorem{prop}[thm]{Proposition}
\newtheorem{lem}[thm]{Lemma}
\newtheorem{cor}[thm]{Corollary}
\newtheorem*{mainthm}{Theorem \ref{thm:IntroSkewing}}
\newtheorem*{combimain}{Theorem \ref{thm: combi main}}
\theoremstyle{remark}
\newtheorem{rmk}[thm]{Remark}
\theoremstyle{definition}
\newtheorem{defn}[thm]{Definition}
\newtheorem{example}[thm]{Example}
\newtheorem{question}[thm]{Question}
\newcommand{\bC}{\mathbb{C}}
\newcommand{\bQ}{\mathbb{Q}}
\newcommand{\inv}{\mathrm{inv}}
\newcommand{\area}{\mathrm{area}}
\newcommand{\dinv}{\mathrm{dinv}}
\newcommand{\Inv}{\mathrm{Inv}}
\newcommand{\arm}{\mathrm{arm}}
\newcommand{\leg}{\mathrm{leg}}
\newcommand{\tdinv}{\mathrm{tdinv}}
\newcommand{\maxtdinv}{\mathrm{maxtdinv}}
\newcommand{\maxInv}{\mathrm{maxInv}}
\newcommand{\hdinv}{\mathrm{hdinv}}
\newcommand{\pathdinv}{\mathrm{pathdinv}}
\newcommand{\WPF}{\mathrm{WPF}}
\newcommand{\LD}{\mathrm{WLD}}
\newcommand{\Stack}{\mathrm{Stack}}
\newcommand{\stack}{\mathrm{stack}}
\newcommand{\sm}{\mathrm{small}}
\newcommand{\bg}{\mathrm{big}}
\newcommand{\pol}{\mathrm{pol}}
\newcommand{\GL}{\mathrm{GL}}
\newcommand{\ch}{\mathrm{ch}}
\newcommand{\Hom}{\mathrm{Hom}}
\newcommand{\Span}{\mathrm{Span}}
\newcommand{\cE}{\mathcal{E}}
\newcommand{\cS}{\mathcal{S}}
\newcommand{\Dyck}{\mathrm{Dyck}}
\newcommand{\sgn}{\mathrm{sgn}}
\newcommand{\talpha}{\widetilde{\alpha}}
\title[A skewing formula for the Rise Delta Theorem]{A combinatorial skewing formula for\\the Rise Delta Theorem}
\author{Maria Gillespie} 
\address{Department of Mathematics, Colorado State University, Fort Collins, CO 80523, USA}
\email{maria.gillespie@colostate.edu}
\thanks{The first author was partially supported by NSF DMS award number 2054391.}
\author{Eugene Gorsky}
\address{Department of Mathematics, University of California Davis\\ One Shields Ave, Davis CA 95616, USA}
\email{egorskiy@math.ucdavis.edu}
\thanks{The second author was partially supported by NSF DMS award number 2302305.}
\author{Sean T. Griffin}
\address{Faculty of Mathematics, University of Vienna, Oskar-Morgenstern-Platz 1, 1090 Vienna, Austria}
\email{sean.griffin@univie.ac.at}
\date{\today}
\begin{document}

\begin{abstract}
    We prove that the symmetric function $\Delta'_{e_{k-1}}e_n$ appearing in the Delta Conjecture can be obtained from the symmetric function in the Rational Shuffle Theorem by applying a Schur skewing operator.  This generalizes a formula by the first and third authors for the Delta Conjecture at $t=0$, and follows from work of Blasiak, Haiman, Morse, Pun, and Seelinger.
    
    Our main result is that we also provide a purely combinatorial proof of this skewing identity, giving a new proof of the Rise Delta Theorem from the Rational Shuffle Theorem.  
\end{abstract}

\maketitle

%\tableofcontents

\section{Introduction}

In the last several decades, many connections have been discovered between Catalan combinatorics and algebraic geometry. One of the most important results in this direction has been Haiman's $(n+1)^{n-1}$ Theorem which asserts that the total dimension of the ring of diagonal coinvariants for the group $S_n$ is $(n+1)^{n-1}$. This formula was proven by Haiman~\cite{Haiman,Haiman2} using deep results on the Hilbert scheme of $n$ points in $\bC^2$. In fact, this ring carries an extra structure as a bigraded $\bC S_n$-module. The bigraded (Frobenius) character of this module has also been shown by Haiman to be equal to $\nabla e_n$, where $e_n$ is an elementary symmetric function and $\nabla$ is a Macdonald eigenoperator \cite{Nabla}. The Shuffle Theorem, which was conjectured in \cite{HHLRU} and proven in \cite{CM} gives a beautiful combinatorial formula \eqref{eq: nabla intro} for $\nabla e_n$ in terms of labeled Dyck paths.

Two prominent generalizations of the Shuffle Theorem are the Rational Shuffle Theorem, which gives a combinatorial formula for $E_{ka,kb}\cdot 1$ where $E_{ka,kb}$ is a certain operator from the elliptic Hall Algebra, and the Delta Conjecture, which gives a combinatorial formula for $\Delta'_{e_{k-1}}e_n$ where $\Delta'_f$ is a class of Macdonald eigenoperators generalizing $\nabla$. In this article, we show that the two are directly related by a Schur skewing operator, as follows.

\begin{thm}\label{thm:IntroSkewing}
    Letting $K = k(n-k+1)$ and $\lambda = (k-1)^{n-k}$, we have
    \begin{equation}\label{eq:IntroSkewingEqn}
    \Delta'_{e_{k-1}}e_n = s_{\lambda}^\perp (E_{K,k}\cdot 1),
    \end{equation}
    where $s_\lambda^\perp$ is the adjoint to multiplication by the Schur function $s_\lambda$. 
\end{thm}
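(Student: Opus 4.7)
My plan is to give a combinatorial proof by starting from the Rational Shuffle Theorem, which expresses $E_{K,k}\cdot 1$ as a weighted sum over labeled lattice paths in a $K\times k$ rectangle with $(q,t)$-weighting based on $\area$ and $\dinv$, and then analyzing how the Schur skewing operator $s_\lambda^\perp$ with $\lambda=(k-1)^{n-k}$ acts on this combinatorial expansion. The target is the Rise Delta Theorem expansion of $\Delta'_{e_{k-1}}e_n$ as a sum over decorated labeled Dyck paths in the $n\times n$ staircase with $n-k$ decorated rises.

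The first step is to unfold $s_\lambda^\perp$ into something manageable. Because $\lambda=(k-1)^{n-k}$ is a rectangle, I can express $s_\lambda^\perp$ via Jacobi--Trudi as a determinant in the $h_j^\perp$ operators (or dually $e_j^\perp$), each of which has a transparent combinatorial effect on labeled objects: stripping horizontal or vertical strips from the label content. A sign-reversing involution on the Jacobi--Trudi expansion should reduce $s_\lambda^\perp$ to a positive selection operation, keeping only those rational parking functions whose content admits the appropriate rectangular strip removal and recording where the strip sits. Next, I would construct a bijection between the surviving labeled $K\times k$ objects, equipped with the book-keeping of the strip removal, and the decorated labeled Dyck paths counted by the Rise Delta Theorem. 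A natural template is a sweep-/scaling-type map exploiting the factorization $K=k(n-k+1)$: the $K\times k$ rectangle decomposes into $n-k+1$ horizontal bands of height $k$, which should translate into an $n\times n$ staircase with $n-k$ extra rises that become the decorations. The $t=0$ specialization proven by the first and third authors gives a template for what this bijection must do on the $\area$ side.

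The main obstacle will be matching the $\dinv$ statistic, since rational $\dinv$ is defined using arm/leg comparisons relative to the $K\times k$ geometry while Delta-side $\dinv$ involves the decorated rises in an $n\times n$ staircase and counts attacking pairs in a superficially very different way. I expect to need either an inductive reduction (on $k$, or on $n-k$, reducing to the $k=1$ shuffle case or the known $t=0$ case), or a careful local-move analysis in the spirit of sweep-map techniques, to verify that $\dinv$ contributions redistribute consistently under the bijection. A secondary subtlety is that $\gcd(K,k)=k>1$, so the Rational Shuffle Theorem input already involves an ``amplified'' family of parking functions, and one must be careful that the skewing selects exactly one representative per $S_k$-orbit in a way compatible with the decorated-rise count.
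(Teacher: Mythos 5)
Your high-level strategy coincides with the paper's combinatorial proof of this identity (Theorem \ref{thm: combi main}): expand the rectangular Schur function by Jacobi--Trudi, cancel terms with a sign-reversing involution, and biject the survivors onto the Delta-side objects. However, two of the ideas that actually make this work are missing from your plan, and one mechanism you propose would not go through.

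First, you have no device that localizes the action of $s_\lambda^\perp$. The paper splits the label alphabet into ``small'' labels ($\le n$) and ``big'' labels ($>n$) and, for each fixed Dyck path $D$, uses the coproduct identity
$$
f_D[X+Y;q]=\sum_{\underline{b},\underline{s}}q^{d(\underline{s},\underline{b})}f_{D,\underline{s}}[X;q]\,f_{D,\underline{b}}[Y;q],
$$
so that $s_{(k-1)^{n-k}}^\perp f_D=\sum_{\underline{b},\underline{s}} q^{d(\underline{s},\underline{b})}f_{D,\underline{s}}\langle f_{D,\underline{b}},s_{(k-1)^{n-k}}\rangle$. Everything then reduces to showing that the single inner product $\langle f_{D,\underline{b}},s_{(k-1)^{n-k}}\rangle$ is the monomial $q^{c_{D,\underline{b}}}$ (Theorem \ref{thm: perping big main}), and it is to \emph{this} pairing --- against an LLT-type polynomial supported on the big boxes only --- that Jacobi--Trudi and the involution are applied. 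Your picture of $h_j^\perp$ as ``stripping strips'' from the whole parking function does not produce such a reduction, so your ``positive selection operation'' is not pinned down. Second, your proposed bijection via $n-k+1$ horizontal bands and a sweep-type map is not the right one: the map that works deletes the boxes carrying big labels together with their up-steps, sending an admissible $(K,k)$ parking function to a stacked parking function with stack heights $w_i=n-k+1-b_i$; the $\dinv$ match is then a direct coordinate computation showing attacking pairs are preserved, and there is no $S_k$-orbit selection issue at all. Finally, the genuinely hard step, which you defer to ``an inductive reduction or local-move analysis,'' is proving that the unique fixed point of the involution contributes exactly $q^{\maxtdinv(D)-\pathdinv(D)-d(\underline{s},\underline{b})}$; this is where most of the work lies and cannot be waved through. (The paper also gives an independent algebraic proof via the shuffle-algebra realization of $E_{K,k}$ and the identity $s_{(n-k)^{k-1}}(x_1^{-1},\dots,x_k^{-1})=h_{n-k}(x)/(x_1\cdots x_k)^{n-k}$, which you do not attempt and which avoids all of this combinatorics.)
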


We give two proofs of Theorem \ref{thm:IntroSkewing}; one is algebraic in nature and uses the relation between the Elliptic Hall Algebra and the Shuffle Algebra and certain identities for $E_{K,k}$ studied in  \cite{BHMPS,BHMPS2,Negut}.  The second is a direct combinatorial proof linking the parking functions and statistics from the Rise Delta Conjecture and the Rational Shuffle Theorem, see Theorem~\ref{thm: combi main} and Corollary~\ref{cor: commutative-diagram} below. 

In a forthcoming paper \cite{GGG2}, we give geometric realizations of the Rational Shuffle Theorem (in the integer slope case $(km,k)$, also known as the ``rectangular'' case) and of the Delta Conjecture in terms of affine Springer fibers.  This also makes use of Theorem \ref{thm:IntroSkewing}.

\subsection{Shuffle Theorems}

A \textbf{labeled $(n,n)$ Dyck path} or \textbf{$(n,n)$ word parking function} is a Dyck path in the $n\times n$ grid whose vertical steps are labeled with positive integers, such that the labeling strictly increases up each vertical run. See the left-most example in Figure~\ref{fig:PF-example}. The Shuffle Theorem~\cite{CM} gives the following remarkable combinatorial formula for the evaluation $\nabla e_n$,
\begin{equation}
\label{eq: nabla intro}
\nabla e_n = \sum_{P \in \WPF_{n,n}} t^{\area(P)} q^{\dinv(P)} x^P,
\end{equation}
see Section \ref{sec: background} for relevant definitions. Similarly, given $\gcd(a,b)=1$ one can define a {\bf $(ka,kb)$ rational word parking function} as a lattice path (also knowns as a rational Dyck path) in the $(ka)\times (kb)$ grid that stays weakly above the line $y=ax/b$, starts in the southwest corner $(0,0)$, and ends in the northeast corner $(kb,ka)$, together with a column-strictly-increasing labeling of the up steps (see the middle path in Figure~\ref{fig:PF-example} for an example where $k=3$, $a=2$ and $b=1$). The combinatorial statistics $\area$ and $\dinv$ in the ``combinatorial" right hand side of \eqref{eq: nabla intro} has a natural generalization to rational parking functions.

\begin{figure}
    \centering
    \includegraphics[scale=0.5]{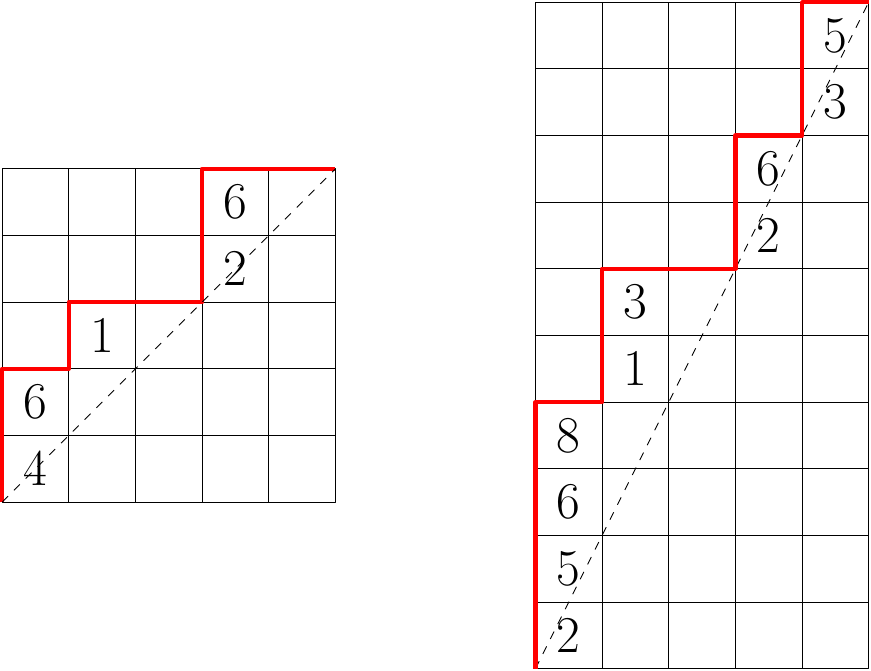}
    \caption{Examples of $(3,3)$ and $(6,3)$ word parking functions}
    \label{fig:PF-example}
\end{figure}

However, to generalize the ``algebraic'' left-hand side one needs to consider the {\bf Elliptic Hall Algebra} $\cE_{q,t}$ defined in \cite{EHA} and extensively studied in the last two decades \cite{RationalShuffle,BHMPS,BHMPS2,GN,Negut,SV2,SV1}. This is a remarkable algebra acting on the space $\Lambda(q,t)$ of symmetric functions in infinitely many variables with coefficients in $\bQ(q,t)$. The Rational Shuffle Theorem, 
proposed by Bergeron, Garsia, Leven and Xin~\cite{RationalShuffle} (see also \cite{GN} for $k=1$ case and a connection to Hilbert schemes), subsequently proven by Mellit~\cite{Mellit}, states that
\[
E_{ka,kb} \cdot (-1)^{k(a+1)} = \sum_{P\in \WPF_{ka,kb}} t^{\area(P)} q^{\dinv(P)} x^P,
\]
where $E_{ka,kb}$ is a particular element of  $\cE_{q,t}$.
Note that our conventions for $a$ and $b$ are flipped from~\cite{RationalShuffle}. 

We will be primarily interested in the case $(ka,kb)=(K,k)$ where $K=k(n-k+1)$ (so that $a=n-k+1$ and $b=1$)  and consider parking functions in the $K\times k$ rectangle. The corresponding symmetric function $E_{K,k}\cdot 1$ has degree $K$.

\begin{rmk}
For $k=n$ we have $K=k$, and the symmetric function in question is $E_{n,n}\cdot 1=\nabla e_n$.
\end{rmk}

\subsection{Delta Conjecture}

A second generalization of the Shuffle Theorem called the {\bf Delta Conjecture} was formulated by Haglund, Remmel, and Wilson \cite{HRW}. It involves a more general Macdonald eigenoperator $\Delta'_{e_{k-1}}$ and relates it to \emph{stacked parking functions} (see Figure \ref{fig:map-F} at right for a stacked parking function).  The (Rise) Delta Conjecture, in terms of stacked parking functions, states 
\[
\Delta'_{e_{k-1}}e_n = \sum_{P\in \LD^{\mathrm{stack}}_{n,k}} t^{\area(P)}q^{\hdinv(P)} x^P.
\]
This version of the Delta Conjecture was proven by~\cite{DAdderio} and independently by~\cite{BHMPS}. To the authors' knowledge, the Valley version (involving a statistic $\mathrm{wdinv}$) of the conjecture remains open.

Our next main result is a combinatorial proof of the skewing identity starting from the Rise Delta and Rational Shuffle combinatorial formulas.

\begin{thm}\label{thm: combi main}
We have
$$
s_{(k-1)^{n-k}}^{\perp}\sum_{\pi\in \WPF_{K,k}}t^{\area(\pi)}q^{\dinv(\pi)}x^{\pi}=\sum_{P\in \LD_{n,k}^{\stack}} t^{\area(P)} q^{\hdinv(P)} x^P,
$$  where the sums are over word parking functions.   In other words, Theorem \ref{thm:IntroSkewing}
holds combinatorially.
\end{thm}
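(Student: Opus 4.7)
The plan is to prove the identity by constructing an explicit statistic-preserving bijection, proceeding in three main steps: (i) realize $s_{(k-1)^{n-k}}^{\perp}$ combinatorially as a weighted sum over rational parking functions enriched with a chosen substructure; (ii) define a bijection from these enriched objects to stacked parking functions; (iii) verify that the map preserves $\area$ and matches $\dinv$ with $\hdinv$.

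For step (i), I would use the plethystic addition identity $F[X+Y] = \sum_{\mu} (s_{\mu}^{\perp} F)(X)\, s_{\mu}(Y)$, combined with either the Jacobi--Trudi expression $s_{(k-1)^{n-k}} = \det(h_{k-1+j-i})$ or the antisymmetrization $s_{\lambda}(y) = a_{\lambda+\delta}(y)/a_{\delta}(y)$, to extract the $s_{(k-1)^{n-k}}$-component of $F[X+Y]$ applied to the Rational Shuffle generating function. After a Lindström--Gessel--Viennot-style sign-reversing involution cancels the negative terms arising from the determinant, this would yield an unsigned sum indexed by pairs $(\pi, T)$ with $\pi \in \WPF_{K,k}$ and $T$ a semistandard tableau of shape $\lambda = (k-1)^{n-k}$ whose entries form a designated sub-multiset of the labels of $\pi$, embedded column-compatibly in $\pi$.

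For step (ii), the numerical identity $K = k(n-k+1) = n + (k-1)(n-k)$ indicates what the bijection should do: peel off the $(k-1)(n-k)$ entries of $T$ from $\pi$, leaving exactly $n$ labels to serve as the labels of a stacked Dyck path $P \in \LD^{\stack}_{n,k}$. I would define $\Phi(\pi, T) = P$ by collapsing the rational $(K,k)$-Dyck path of slope $n-k+1$ to an $(n,n)$-Dyck path while recording which vertical steps were marked by $T$; the columns of $T$ would then encode the rise and stacking data of $P$. The $t=0$ bijection of the first and third authors should serve as a useful guide for the correct combinatorial recipe in general $t$.

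The central challenge is step (iii), in particular converting $\dinv(\pi)$ into $\hdinv(P)$. Matching $\area$ should reduce to cell-counting once the collapse rule is specified, since each column of $T$ contributes a predictable area offset. By contrast, $\dinv$ and $\hdinv$ are defined on different grids using essentially different rules (arms and legs relative to the slope-$a$ diagonal versus a more intricate rise-decoration count), so I would classify pairs contributing to $\dinv(\pi)$ according to whether each of their two coordinates lies in $T$ or outside $T$, and show that the three resulting pair-types correspond precisely to contributions to $\hdinv(P)$ or else cancel via the column-compatibility conditions imposed on $T$. An explicit inverse that canonically reconstructs $T$ from the rise data of $P$ would then confirm that $\Phi$ is a bijection.
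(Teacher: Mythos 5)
Your overall architecture --- plethystic addition $F[X+Y]=\sum_\mu (s_\mu^\perp F)(X)\,s_\mu(Y)$, Jacobi--Trudi expansion of $s_{(k-1)^{n-k}}$, a sign-reversing involution, and a final bijection peeling off $(k-1)(n-k)$ ``big'' labels to land in $\LD^{\stack}_{n,k}$ --- matches the paper's. But there is a genuine gap at the heart of your plan. You expect the cancellation to leave an \emph{unsigned sum over pairs} $(\pi,T)$ and then propose a bijection $\Phi(\pi,T)=P$. That cannot work as stated: $\hdinv(P)$ and $x^P$ see only the $n$ small labels, so every pair $(\pi,T)$ sharing the same path $D$, big/small decomposition $(\underline{b},\underline{s})$, and small labels must map to the same $P$, and hence the total $q$-weight carried by the big-label filling and the tableau $T$ must collapse to a \emph{single monomial} in $q$ for each $(D,\underline{b})$. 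This is exactly the paper's key Theorem: writing $f_{D,\underline{b}}$ for the generating function of big-label fillings weighted by $q^{\tdinv_{\bg}}$, one has $\langle f_{D,\underline{b}},s_{(k-1)^{n-k}}\rangle=q^{c_{D,\underline{b}}}$ with $c_{D,\underline{b}}=\maxtdinv(D)-\pathdinv(D)-d(\underline{s},\underline{b})$ when $\underline{b}$ is admissible (and $0$ otherwise, which is what enforces the stack condition $b_i\le n-k$). Your proposal contains no mechanism for producing this single-term collapse or for identifying the exponent; an LGV-style involution generically leaves many surviving terms, whereas here the involution must have a \emph{unique} fixed point per $(D,\underline{b})$, and computing $\tdinv_{\bg}$ of that fixed point is the hardest part of the whole argument (it occupies two full sections of the paper, via a ``shrinking'' of diagrams and a delicate induction).

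A second, related omission: the exponent $c_{D,\underline{b}}$ is precisely what reconciles the normalization $\dinv(\pi)=\pathdinv(D)+\tdinv(\pi)-\maxtdinv(D)$ with the decomposition $\tdinv(\pi)=\tdinv_{\sm}+d(\underline{s},\underline{b})+\tdinv_{\bg}$, so that after skewing only $q^{\tdinv_{\sm}}$ survives, and one then needs the (comparatively easy) facts that the peeling map preserves area and sends $\tdinv_{\sm}$ to $\hdinv$. Your step (iii) gestures at a case analysis of inversion pairs by membership in $T$, which is the right spirit for the small-small pairs, but without the monomial identity above the small-big and big-big contributions do not cancel pair-by-pair against anything in $\hdinv$ --- they must be summed over all big fillings and shown to telescope to one power of $q$. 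I would advise restructuring your proof around establishing that inner-product identity first; everything else in your outline then falls into place.
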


\begin{figure}
    \centering
    \includegraphics{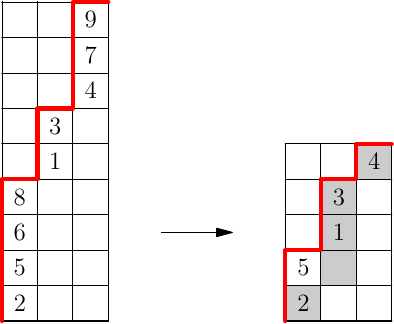}
    \caption{Mapping a standard $(K,k)$-parking function to a stacked parking function by removing the big labels, that is, those that are greater than $n$.  Here $k=3$ and $n=5$, so $K=9$.}
    \label{fig:map-F}
\end{figure}

There are several major steps to proving this directly.  We first convert the stacked parking function formula into modified $(K,k)$-rational parking functions by considering ``big'' and ``small'' letters on the latter and making use of a weight-preserving reduction map. There are $n$ ``small" labels and $K-n=(n-k)(k-1)$ ``big" labels, and all big labels are strictly bigger than the small ones.
If the word parking function has all distinct labels from $1$ to $K$ (so that this is a \textit{standard} parking function), we simply consider ``small'' letters to be less than or equal to $n$ and ``big'' letters strictly greater than $n$. See Figure \ref{fig:map-F}, and see Sections \ref{sec: stacks} and \ref{sec:reduce} for details and the generalization to word parking functions.  

In Section \ref{sec: LLT} we then compare the coefficients of powers of $t$ on both sides, and express them in terms of specific \textit{LLT polynomials} \cite{HHLRU,LLT} of the form 
$$
f_{D,\underline{b}}=\sum_{\pi_{\bg}\in \WPF(D,\underline{b})}q^{\tdinv_{\bg}(\pi_\bg)}x^{\pi_{\bg}},\quad f_{D,\underline{s}}=\sum_{\pi_{\sm}\in \WPF(D,\underline{s})}q^{\tdinv_{\sm}(\pi_\sm)}x^{\pi_{\sm}}
$$
Here $\WPF(D,\underline{b})$ indicates the set of partial word parking functions for a sequence of a designated number of ``big'' boxes $b$ at the top of each column of a rational Dyck path $D$.  Similarly $\WPF(D,\underline{s})$ is the set of partial word parking functions on the ``small'' boxes (the remaining boxes in each column).  

Two short computations prove the identities
$$
s_{(k-1)^{n-k}}^{\perp}\sum_{\pi\in \WPF_{K,k}}t^{\area(\pi)}q^{\dinv(\pi)}x^{\pi}=\sum_{D,\underline{b},\underline{s}}\langle  f_{D,\underline{b}}[X;q], s_{(k-1)^{n-k}}\rangle f_{D,\underline{s}}.
$$
and
$$
\sum_{P\in \LD_{n,k}^{\stack}} t^{\area(P)} q^{\hdinv(P)} x^P=\sum_{D,\underline{b},\underline{s}}q^{c_{D,\underline{b}}}f_{D,\underline{s}}
$$
where $c_{D,\underline{b}}$ is a constant defined in terms of the combinatorial statistics.  This reduces the problem to showing that the following inner product is this single power of $q$: $$\langle  f_{D,\underline{b}}[X;q], s_{(k-1)^{n-k}}\rangle = q^{c_{D,\underline{b}}}.$$
(See Theorem \ref{thm: perping big main}). We show this by expressing $s_{(k-1)^{n-k}}$ in terms of an alternating sum of homogeneous symmetric functions using the \textit{Jacobi-Trudi identity}, which expresses the inner product as a sum of monomials given by the combinatorial LLT expansion.  Finally, we use a sign-reversing involution to cancel all but one term in the resulting sum (Section \ref{sec: sign reversing}), and use an inductive argument to show that the remaining term has the correct power (Section \ref{sec: coefficient}).

Finally, this analysis leads to a new proof of the Rise Delta Theorem as follows.

\begin{cor}\label{cor: commutative-diagram}
    The Rise Delta Theorem (bottom horizontal arrow below) follows from the Rational Shuffle Theorem (top horizontal arrow) and Theorem \ref{thm:IntroSkewing} (left vertical arrow) via a direct combinatorial proof (right vertical arrow, see Section \ref{sec: combinatorics}).
$$
\begin{tikzcd}
E_{K,k}\cdot 1 \arrow{rr}  \arrow{d}{s_{(k-1)^{n-k}}^{\perp}} & &
 \displaystyle\sum_{\pi\in \WPF_{K,k}}t^{\area(\pi)}q^{\tdinv(\pi)}x^{\pi}\arrow{d}{s_{(k-1)^{n-k}}^{\perp}} \arrow{ll} \\
 \Delta'_{e_{k-1}}(e_n) \arrow{rr} & &
\displaystyle\sum_{\pi\in \mathrm{WLD}^{\mathrm{stack}}_{n,k}}t^{\area(\pi)}q^{\hdinv(P)}x^{\pi} \arrow{ll}
\end{tikzcd}
$$

\end{cor}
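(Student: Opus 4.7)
The plan is to chase the commutative diagram, combining the three ingredients that are already in hand. The top horizontal arrow is the Rational Shuffle Theorem of Mellit, the left vertical arrow is Theorem \ref{thm:IntroSkewing}, and the right vertical arrow is Theorem \ref{thm: combi main}. The bottom horizontal arrow (the Rise Delta Theorem) will then drop out as a formal consequence of commutativity.

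More precisely, I would first invoke the Rational Shuffle Theorem to obtain
\[
E_{K,k}\cdot 1 \;=\; \sum_{\pi\in \WPF_{K,k}} t^{\area(\pi)} q^{\dinv(\pi)} x^\pi,
\]
where $K = k(n-k+1)$. Next, I would apply the Schur skewing operator $s_{(k-1)^{n-k}}^\perp$ to both sides. By Theorem \ref{thm:IntroSkewing}, the left-hand side equals $\Delta'_{e_{k-1}}e_n$. By Theorem \ref{thm: combi main}, the right-hand side equals $\sum_{P\in \LD_{n,k}^{\stack}} t^{\area(P)} q^{\hdinv(P)} x^P$. Equating these two expressions is exactly the Rise Delta Theorem.

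There is no real obstacle at this stage: all the substance has already gone into establishing Theorems \ref{thm:IntroSkewing} and \ref{thm: combi main}. The reason for recording the corollary is to emphasize that the Rise Delta Theorem is now recovered through the Rational Shuffle Theorem together with the purely combinatorial skewing identity, independently of the existing proofs by D'Adderio and by Blasiak, Haiman, Morse, Pun, and Seelinger. In particular, nothing in this chain of reasoning invokes the algebraic identities for $\Delta'_{e_{k-1}}$ that appear in those earlier proofs.
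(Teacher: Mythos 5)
Your diagram chase is exactly the paper's intended argument: the corollary is immediate from combining the Rational Shuffle Theorem (top arrow), Theorem \ref{thm:IntroSkewing} (left arrow), and Theorem \ref{thm: combi main} (right arrow), and the paper offers no further proof beyond this. One small caveat on your closing remark: the paper's proof of Theorem \ref{thm:IntroSkewing} does invoke the shuffle-algebra identity for $\omega\Delta'_{e_{k-1}}e_n$ from Blasiak--Haiman--Morse--Pun--Seelinger (Corollary \ref{cor: delta shuffle}), so the chain is not entirely free of the algebraic $\Delta'$ machinery from the earlier literature, though it does avoid their proof of the Rise Delta Theorem itself.
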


\subsection{Organization of the paper}

The paper is organized as follows. In Section \ref{sec: background}, we give a combinatorial background on symmetric functions, parking functions and affine permutations. In Section \ref{sec: skewing}, we discuss the operators from the Elliptic Hall Algebra and their properties, and  prove Theorem \ref{thm:IntroSkewing}. 
Finally, we give our direct combinatorial proof of Theorem \ref{thm:IntroSkewing} that links the combinatorics of the Rise Delta Theorem and the Rational Shuffle Theorem in Section \ref{sec: combinatorics}.

\section*{Acknowledgments}

We thank Fran\c{c}ois Bergeron, Eric Carlsson, Mark Haiman, Jim Haglund, Oscar Kivinen, Jake Levinson, Misha Mazin, Anton Mellit, Andrei Negu\cb{t}, Anna Pun, George Seelinger, and Andy Wilson for useful discussions.

\section{Notation and Background}
\label{sec: background}

\subsection{Symmetric functions}
\label{sec: symmetricfunctions}

We refer to \cite{Macdonald} for details on many of the standard definitions in this section.

We will work with rings of symmetric functions both in $k$ and in infinitely many variables, denoted respectively by $\Lambda_k$ and $\Lambda$.   There is a natural projection $\pi_k:\Lambda\to \Lambda_k$ which sends $f\in \Lambda$ to $f(x_1,\ldots,x_k,0,\dots)$.  

We will use elementary and complete homogeneous symmetric functions
$$
e_m=\sum_{i_1<\cdots<i_m}x_{i_1}\cdots x_{i_m},\quad h_m=\sum_{i_1\le \cdots\le i_m}x_{i_1}\cdots x_{i_m}.
$$
Note that $\pi_k(e_m)=0$ for $m>k$. For a partition $\lambda$ we write
$$
e_{\lambda}=\prod_i e_{\lambda_i},\quad  
h_{\lambda}=\prod_i h_{\lambda_i}.
$$
We also have monomial symmetric functions 
$$
m_{\lambda}= \sum_{(i_1,\ldots,i_\ell)} x_{i_1}^{\lambda_1}x_{i_2}^{\lambda_2}\cdots x_{i_\ell}^{\lambda_{\ell}}
$$
where $(i_1,\ldots,i_\ell)$ is any tuple of distinct positive integers such that $i_j< i_{j+1}$ whenever $\lambda_j=\lambda_{j+1}$.

We also use the Schur functions $s_{\lambda}$, which may be defined in terms of the Jacobi-Trudi formula $s_\lambda=\det((h_{\lambda_i+i-j})_{i,j=1}^\ell)$.  They can alternatively be defined in terms of the monomial symmetric functions, via the formula
$s_\lambda=\sum K_{\lambda\mu} m_\mu$ where the coefficients $K_{\lambda\mu}$ are the \textit{Kostka numbers}, which count the number of column-strict \textit{Young tableaux} of shape $\lambda$ and content $\mu$.  We draw our Young tableaux in French notation:
$$\young(4,25,1155)$$
and the above tableau has content $(2,1,0,1,3)$, with the $i$th entry indicating the multiplicity of $i$ in the tableau.   Its shape is $(4,2,1)$, indicating the length of each row from bottom to top.

Note that 
$
e_\ell=m_{(1^{\ell})}=s_{(1^\ell)},\ h_\ell=s_{(\ell)}.
$
We will denote these symmetric functions and their images under $\pi_k$ in the same way, if the number of variables is clear from the context. Note that 
\begin{equation}
\label{eq: projected Schur}
\pi_k(s_{\lambda})=0\ \text{if and only if}\ \ell(\lambda)>k,
\end{equation}
and $\pi_k(s_{\lambda})=s_{\lambda}(x_1,\ldots,x_k)$ for $\ell(\lambda)\le k$ form a basis of $\Lambda_k$. 

\begin{defn}
The Hall inner product on $\Lambda$ is defined by 
$$
\langle s_{\lambda},s_{\mu}\rangle=\delta_{\lambda,\mu}.
$$
Similarly, the Hall inner product on $\Lambda_k$ is defined by 
$$
\langle s_{\lambda},s_{\mu}\rangle_k=\delta_{\lambda,\mu}\ \text{if}\ \ell(\lambda),\ell(\mu)\le k.
$$
\end{defn}

Note that in general
$
\langle f,g\rangle\neq \langle\pi_k(f),\pi_k(g)\rangle_k
$
since for $f=g=s_{\lambda}$ with $\ell(\lambda)>k$ the left hand side equals 1 and the right hand side vanishes. Nevertheless, we have the following:
\begin{prop}
\label{prop: Hall restriction}
Suppose $f\in \Lambda$ and 
$
g\in \Span\{s_{\lambda}:\ell(\lambda)\le k\}\subset \Lambda.
$
Then $\langle f,g\rangle=\langle\pi_k(f),\pi_k(g)\rangle_k$.
\end{prop}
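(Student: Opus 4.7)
The plan is to expand both $f$ and $g$ in the Schur basis of $\Lambda$ and reduce the identity to the orthonormality of the Schur functions under both inner products.

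First I would write $f=\sum_{\lambda} c_\lambda s_\lambda$ and, using the hypothesis on $g$, write $g=\sum_{\mu:\ell(\mu)\le k} d_\mu s_\mu$ with the sum restricted to partitions of length at most $k$. By bilinearity and the definition $\langle s_\lambda,s_\mu\rangle=\delta_{\lambda,\mu}$, the left-hand side collapses to
\begin{equation*}
\langle f,g\rangle=\sum_{\lambda:\ell(\lambda)\le k} c_\lambda d_\lambda.
\end{equation*}

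Next I would apply $\pi_k$ to each side. By \eqref{eq: projected Schur}, $\pi_k$ annihilates every Schur term of length greater than $k$, so
\begin{equation*}
\pi_k(f)=\sum_{\lambda:\ell(\lambda)\le k} c_\lambda\, s_\lambda(x_1,\dots,x_k),\qquad \pi_k(g)=\sum_{\mu:\ell(\mu)\le k} d_\mu\, s_\mu(x_1,\dots,x_k).
\end{equation*}
Since the paper states that $\{\pi_k(s_\lambda):\ell(\lambda)\le k\}$ is a basis of $\Lambda_k$, and the inner product $\langle\cdot,\cdot\rangle_k$ is defined so that these basis elements are orthonormal, bilinearity gives
\begin{equation*}
\langle \pi_k(f),\pi_k(g)\rangle_k=\sum_{\lambda:\ell(\lambda)\le k} c_\lambda d_\lambda,
\end{equation*}
matching the expression for $\langle f,g\rangle$ obtained above.

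This essentially is the proof; there is no genuine obstacle. The only subtle point worth emphasizing is that the restriction on $g$ is essential: without it, the terms $c_\lambda d_\lambda$ for $\ell(\lambda)>k$ would contribute to $\langle f,g\rangle$ but not to $\langle \pi_k(f),\pi_k(g)\rangle_k$, which is precisely the failure mode illustrated in the paragraph preceding the proposition. One could alternatively phrase the argument by noting that $\pi_k$ is the orthogonal projection onto $\Span\{s_\lambda:\ell(\lambda)\le k\}$ with respect to $\langle\cdot,\cdot\rangle$, so $\langle f,g\rangle=\langle \pi_k(f),g\rangle$ whenever $g$ lies in this span, and then identifying $\langle\cdot,\cdot\rangle$ restricted to this subspace with $\langle\cdot,\cdot\rangle_k$ via the Schur basis.
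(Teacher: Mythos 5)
Your proof is correct, and it is exactly the verification the paper has in mind: the paper states Proposition \ref{prop: Hall restriction} without proof, treating it as immediate from the orthonormality of the Schur functions under both pairings together with \eqref{eq: projected Schur}, which is precisely what you spell out. No gaps.
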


It will be useful to consider an extension of $\Lambda_k$ to the space $\Lambda_k^{\pm}$ of symmetric {\em Laurent} polynomials in $x_1,\ldots,x_k$. Any element of $\Lambda_k^{\pm}$ can be written as $(x_1\cdots x_k)^{-s}f(x_1,\ldots,x_k)$ for some $s\ge 0$ and $f\in \Lambda_k$, so $\Lambda_k^{\pm}$ is a localization of 
$\Lambda_k$ in $(x_1\cdots x_k)$. 
Since $s_{\lambda+(1^{k})}=(x_1\cdots x_k)s_{\lambda}$, we get the following decomposition of $\Lambda_k^{\pm}$:
\begin{equation}
\label{eq: lambda plus minus}
\Lambda_k^{\pm}=\Lambda_k\oplus \Span\left\{(x_1\cdots x_k)^{-s}s_{\lambda}\ :\ s>0,\ell(\lambda)<k\right\}.
\end{equation}
Given $f\in \Lambda_{k}^{\pm}$ we denote by $f_{\pol}$ the projection of $f$ on $\Lambda_k$ along the decomposition \eqref{eq: lambda plus minus}.

Alternatively, one can think of $\Lambda_k^{\pm}$ as the space of characters of finite dimensional rational $\GL_k$-representations, with basis
$$
(x_1\cdots x_k)^{-s}s_{\lambda}(x_1,\dots, x_k)=\ch\left[ (\det)^{-s}\otimes V_{\lambda}\right].
$$
Here $\det$ is the one-dimensional determinant representation of $\GL_k$, $V_{\lambda}$ is the irreducible representation of $\GL_k$ with highest weight $\lambda$, and $(\det)^{-s}=(\det^*)^{\otimes s}$. The elements of $\Lambda_k$ correspond to {\em polynomial representations} of $\GL_k$.
Note that $\mathrm{ch}(V^*)=\mathrm{ch}(V)(x_1^{-1},\ldots,x_k^{-1})$ and 
$\ch(V_1\otimes V_2)=\ch(V_1)\cdot \ch(V_2).$

\begin{prop}
\label{prop: Hall constant term}~

\begin{enumerate}[(a)]
\item For representations $U,V$ of $\GL_k$ we have 
$$
\langle \ch(U),\ch(V)\rangle_{k}=\dim \Hom_{\GL_k}(U,V)=\dim (U^*\otimes V)^{\GL_k}.
$$
\item For symmetric functions $f,g\in \Lambda_k$ we have
$$
\langle f,g\rangle_{k}=\langle x^0\rangle f(x_1^{-1},\ldots,x_k^{-1})g(x_1,\ldots,x_k)
$$
where $\langle x^0\rangle$ denotes the constant term.
\item For $f\in \Lambda_k$ and $g\in \Lambda_k^{\pm}$ we have
$$
\langle f,g_{\pol}\rangle_{k}=\langle x^0\rangle f(x_1^{-1},\ldots,x_k^{-1})g(x_1,\ldots,x_k)
$$
\end{enumerate}
\end{prop}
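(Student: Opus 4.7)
Plan. The proposition would be proved in three steps, with (a) providing the representation-theoretic backbone that powers (b) and (c).

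For (a), I invoke complete reducibility of rational $\GL_k$-representations. The irreducibles $V_\lambda$ are indexed by dominant integer weights $\lambda\in\bZ^k$ (allowing $\lambda_k<0$), and their characters extend the Schur basis of $\Lambda_k$ to a basis of $\Lambda_k^\pm$ which is orthonormal under the natural extension of $\langle\cdot,\cdot\rangle_k$. Decomposing $U\cong\bigoplus V_\lambda^{\oplus m_\lambda}$ and $V\cong\bigoplus V_\lambda^{\oplus n_\lambda}$, Schur's lemma and orthonormality both yield $\sum_\lambda m_\lambda n_\lambda$, matching the two expressions. The third equality $\dim\Hom_{\GL_k}(U,V) = \dim(U^*\otimes V)^{\GL_k}$ comes from the canonical $\GL_k$-equivariant isomorphism $\Hom(U,V)\cong U^*\otimes V$, whose $\GL_k$-invariant part is exactly $\Hom_{\GL_k}(U,V)$.

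For (b), bilinearity of both sides reduces to $f=s_\lambda$, $g=s_\mu$ with $\ell(\lambda),\ell(\mu)\le k$. Applying (a) with $U=V_\lambda$, $V=V_\mu$ gives $\langle s_\lambda,s_\mu\rangle_k = \dim(V_\lambda^*\otimes V_\mu)^{\GL_k}$, and $\ch(V_\lambda^*\otimes V_\mu) = s_\lambda(x_1^{-1},\ldots,x_k^{-1})s_\mu(x_1,\ldots,x_k)$. The $\GL_k$-invariant dimension of any character equals the multiplicity of the trivial character in its Schur-Laurent expansion, which by the Weyl integration formula equals $\tfrac{1}{k!}$ times the literal constant term of the character times $\prod_{i\ne j}(1-x_i/x_j)$; a direct Weyl-denominator calculation using $s_\nu\cdot\Delta = \det(x_j^{\nu_i+k-i})$ then confirms the pairing equals $\delta_{\lambda\mu}$, completing (b).

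For (c), I decompose $g = g_{\pol} + g''$ via \eqref{eq: lambda plus minus} and apply (b) to the polynomial summand. It remains to show that $\langle x^0\rangle f(x_1^{-1},\ldots,x_k^{-1})g''(x_1,\ldots,x_k) = 0$. For each basis element $(x_1\cdots x_k)^{-s}s_\mu$ of $g''$ (with $s>0$ and $\ell(\mu)<k$), the product $f(x^{-1})(x_1\cdots x_k)^{-s}s_\mu(x)$ is the character of $U^*\otimes\det^{-s}\otimes V_\mu$, whose $\GL_k$-invariants compute $\Hom_{\GL_k}(\det^s\otimes U, V_\mu)$. Since $s>0$, every irreducible component $V_\nu$ of $\det^s\otimes U$ has $\ell(\nu)=k$ (the tensor with $\det^s$ shifts each bottom part to be $\ge s>0$), incompatible with $\ell(\mu)<k$; hence the Hom vanishes and the contribution is zero.

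The main subtlety is the convention behind $\langle x^0\rangle$: read literally as the coefficient of $x_1^0\cdots x_k^0$ in a symmetric Laurent polynomial, it computes the torus weight-zero dimension, which strictly exceeds the $\GL_k$-invariant dimension in general (for instance, when $k=2$ and $f=g=s_1$, the literal constant term of $(x_1^{-1}+x_2^{-1})(x_1+x_2)$ is $2$, whereas $\langle s_1,s_1\rangle_2 = 1$). The notation must therefore be read as the coefficient of the trivial $\GL_k$-character in the Schur-Laurent expansion — equivalently, the Weyl-integrated constant term with its $\tfrac{1}{k!}\prod_{i\ne j}(1-x_i/x_j)$ measure — and keeping this interpretation consistent is the delicate point in writing out the proof.
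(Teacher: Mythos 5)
Your proof follows the same route as the paper's: part (a) via complete reducibility and Schur's lemma together with $\Hom(U,V)\cong U^*\otimes V$, part (b) from (a) by bilinearity, and part (c) by decomposing $g=g_{\pol}+g_{-}$ along \eqref{eq: lambda plus minus} and showing the negative summand pairs to zero. Your highest-weight argument for that vanishing (every component of $\det^{s}\otimes U$ has length exactly $k$, while $\ell(\mu)<k$) is a correct and more explicit version of what the paper simply asserts.

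The substantive point is your closing remark, and you are right to flag it. Taken literally, $\langle x^0\rangle$ (the coefficient of $x_1^0\cdots x_k^0$) computes the dimension of the \emph{zero weight space} of $U^*\otimes V$, not of its $\GL_k$-invariants, and your example $\langle x^0\rangle\,(x_1^{-1}+x_2^{-1})(x_1+x_2)=2\neq 1=\langle s_{(1)},s_{(1)}\rangle_2$ shows that the formula in (b) fails under that literal reading; the same issue would break the claimed vanishing in (c). The statement is correct only if $\langle x^0\rangle$ is understood as the Weyl-integrated constant term, i.e.\ $\tfrac{1}{k!}$ times the ordinary constant term of the integrand multiplied by $\prod_{i\neq j}(1-x_i/x_j)$ — equivalently, the coefficient of the trivial character in the Schur--Laurent expansion, which is what Schur's lemma actually delivers in (a). The paper's one-line proof does not address this, so your consistent use of the integrated reading is the right repair; with that convention fixed, all three parts of your argument are sound.
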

\begin{proof}
Part (a) follows from Schur's Lemma and part (b) follows from (a) by linearity. To prove (c), we decompose 
$g=g_{\pol}+g_{-}$ as in \eqref{eq: lambda plus minus}, then
$
\langle x^0\rangle f(x_1^{-1},\ldots,x_k^{-1})g_{-}(x_1,\ldots,x_k)=0.
$
Therefore,
\begin{align*}
\langle f,g_{\pol}\rangle_{k}&=\langle x^0\rangle f(x_1^{-1},\ldots,x_k^{-1})g_{\pol}(x_1,\ldots,x_k)\\
&=\langle x^0\rangle f(x_1^{-1},\ldots,x_k^{-1})g(x_1,\ldots,x_k).\qedhere
\end{align*}
\end{proof}
 
Next, we would like to discuss multiplication operators and their adjoints with respect to the Hall inner product. 

\begin{defn}
The operators $s_{\lambda}^{\perp}:\Lambda\to \Lambda$ and $s_{\lambda}^{\perp,k}:\Lambda_k\to \Lambda_k$ are defined such that the identities
$$
\langle s_{\lambda}^{\perp}f,g\rangle=\langle f,s_{\lambda}g\rangle,\quad \langle s_{\lambda}^{\perp,k}f,g\rangle_k=\langle f,s_{\lambda}g\rangle_k
$$
hold for all symmetric functions $f,g\in \Lambda$ (resp. $f,g\in \Lambda_k$).
\end{defn}

\begin{lem} 
\label{lem: perp projection}
Assume $\ell(\lambda)\le k$ and suppose $f\in \Span\{s_{\nu}:\ell(\nu)\le k\}\subset \Lambda$. Then 
$$
s_{\lambda}^{\perp}f=\sum_{\ell(\mu)\le k} c_{\mu} s_{\mu}
$$
where
$$
c_{\mu}=\langle x^0\rangle s_{\mu}(x_1^{-1},\ldots,x_k^{-1})s_{\lambda}(x_1^{-1},\ldots,x_k^{-1})f(x_1,\ldots,x_k).
$$
Furthermore, $$\pi_k(s_{\lambda}^{\perp}f)=s_{\lambda}^{\perp,k}(\pi_k(f)).$$
\end{lem}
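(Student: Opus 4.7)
The plan is to first pin down the Schur support of $s_\lambda^\perp f$, then use the defining adjunction together with Propositions \ref{prop: Hall restriction} and \ref{prop: Hall constant term} to compute the coefficients as constant terms, and finally compare Schur expansions to get the $\pi_k$-compatibility.

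First I would show that $s_\lambda^\perp f \in \Span\{s_\mu : \ell(\mu) \le k\}$. Expand $f = \sum_{\ell(\nu) \le k} a_\nu s_\nu$ and note that $s_\lambda^\perp s_\nu = \sum_\mu c^\nu_{\lambda,\mu} s_\mu$, where the Littlewood--Richardson coefficient $c^\nu_{\lambda,\mu}$ is nonzero only if $\mu \subseteq \nu$. In particular, $\ell(\mu) \le \ell(\nu) \le k$ for every $\mu$ that appears, giving the claim.

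Next I would compute the coefficient $c_\mu$ for $\ell(\mu)\le k$. By definition of the adjoint and symmetry of the Hall pairing,
\begin{equation*}
c_\mu = \langle s_\lambda^\perp f, s_\mu \rangle = \langle s_\lambda s_\mu, f \rangle.
\end{equation*}
Since $f$ lies in $\Span\{s_\nu : \ell(\nu) \le k\}$, applying Proposition \ref{prop: Hall restriction} (with the roles of the two arguments swapped from its stated form, which is legal by symmetry of $\langle\cdot,\cdot\rangle$) gives
\begin{equation*}
\langle s_\lambda s_\mu, f \rangle = \langle \pi_k(s_\lambda s_\mu),\, \pi_k(f) \rangle_k = \langle s_\lambda(x_1,\ldots,x_k)\, s_\mu(x_1,\ldots,x_k),\, \pi_k(f)\rangle_k,
\end{equation*}
using that $\pi_k$ is a ring homomorphism and that $\ell(\lambda),\ell(\mu)\le k$. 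Now Proposition \ref{prop: Hall constant term}(b) converts the inner product in $\Lambda_k$ into a constant term; after substituting $x\mapsto x^{-1}$ (which preserves the constant term of a symmetric Laurent polynomial) one recovers exactly
\begin{equation*}
c_\mu = \langle x^0\rangle\, s_\mu(x_1^{-1},\ldots,x_k^{-1})\, s_\lambda(x_1^{-1},\ldots,x_k^{-1})\, f(x_1,\ldots,x_k),
\end{equation*}
as desired.

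Finally, for the identity $\pi_k(s_\lambda^\perp f) = s_\lambda^{\perp,k}(\pi_k(f))$, I would just compare Schur expansions of both sides in $\Lambda_k$. Applying $\pi_k$ to the expression from the first two steps yields $\pi_k(s_\lambda^\perp f) = \sum_{\ell(\mu)\le k} c_\mu\, s_\mu(x_1,\ldots,x_k)$, while the coefficients of $s_\lambda^{\perp,k}(\pi_k(f))$ with respect to the basis $\{s_\mu(x_1,\ldots,x_k) : \ell(\mu)\le k\}$ of $\Lambda_k$ are, by definition of $s_\lambda^{\perp,k}$, equal to $\langle \pi_k(f), s_\lambda s_\mu\rangle_k$, which is precisely the $c_\mu$ computed above via Proposition \ref{prop: Hall constant term}(b). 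The two expansions therefore coincide.

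I do not anticipate a genuinely hard step here; the one place to be careful is making sure that the restriction-to-$k$-variables identity (Proposition \ref{prop: Hall restriction}) is applied to an argument that genuinely lies in $\Span\{s_\nu : \ell(\nu)\le k\}$, which is why the initial Littlewood--Richardson observation is needed and why the roles of the two factors of the inner product must be swapped before invoking the proposition.
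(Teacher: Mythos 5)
Your proof is correct and follows essentially the same route as the paper's: the Littlewood--Richardson support argument to confine $s_\lambda^\perp f$ to partitions of length at most $k$, then adjunction plus Propositions \ref{prop: Hall restriction} and \ref{prop: Hall constant term}(b) to compute $c_\mu$ as a constant term. Your explicit verification of the final identity $\pi_k(s_\lambda^\perp f)=s_\lambda^{\perp,k}(\pi_k(f))$ by matching Schur coefficients is a welcome addition, as the paper leaves that step implicit.
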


\begin{proof}
We have $s_{\lambda}^{\perp}s_{\nu}=\sum_{\mu}c_{\lambda,\mu}^{\nu}s_{\mu}$ where $c_{\lambda,\mu}^{\nu}$ are Littlewood-Richardson coefficients. If $\ell(\nu)\le k$ then the   coefficients $c_{\lambda,\mu}^{\nu}$ could be nonzero only when $\ell(\lambda),\ell(\mu)\le k$. This implies
$$
s_{\lambda}^{\perp}s_{\nu}\in \Span\{s_{\mu}:\ell(\mu)\le k\},
$$
and the same holds for $s_{\lambda}^{\perp}f$ if $f\in \Span\{s_{\nu}:\ell(\nu)\le k\}.$

To determine the coefficients $c_{\mu}$, we write
\begin{align*}
c_{\mu}&=\langle s_{\mu},s_{\lambda}^{\perp}f\rangle=
\langle s_{\lambda}s_{\mu},f\rangle=\langle\pi_k(s_{\lambda}s_{\mu}),\pi_k(f)\rangle_k\\
&=\langle x^0\rangle s_{\mu}(x_1^{-1},\ldots,x_k^{-1})s_{\lambda}(x_1^{-1},\ldots,x_k^{-1})f(x_1,\ldots,x_k).
\end{align*}
The third equation follows from Proposition \ref{prop: Hall restriction} and the last equation follows from Proposition \ref{prop: Hall constant term}.
\end{proof}

\begin{lem}
Suppose that $f\in \Lambda_k$ then 
$$
s_{\lambda}^{\perp,k}f=\left[s_{\lambda}(x_1^{-1},\ldots,x_k^{-1})f(x_1,\ldots,x_k)\right]_{\pol}.
$$
\end{lem}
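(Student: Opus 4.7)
The plan is to exploit the uniqueness of $s_{\lambda}^{\perp,k} f$ as an element of $\Lambda_k$. By definition, $s_{\lambda}^{\perp,k} f$ is the unique symmetric polynomial satisfying $\langle s_{\lambda}^{\perp,k} f, g\rangle_k = \langle f, s_\lambda g\rangle_k$ for every $g \in \Lambda_k$. The candidate $h_{\pol}$, where $h := s_\lambda(x_1^{-1},\ldots,x_k^{-1}) f(x_1,\ldots,x_k)$, lies in $\Lambda_k$ by the decomposition \eqref{eq: lambda plus minus}, since $h$ is a symmetric Laurent polynomial in $\Lambda_k^{\pm}$ (both factors are symmetric in $x_1,\ldots,x_k$). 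So it suffices to verify that $\langle h_{\pol}, g\rangle_k = \langle f, s_\lambda g\rangle_k$ for every $g \in \Lambda_k$.

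First I would compute both sides as constant terms. Applying Proposition \ref{prop: Hall constant term}(c) (with the polynomial slot $f$ there replaced by $g$, and the Laurent slot $g$ there replaced by $h$), combined with symmetry of the Hall inner product on $\Lambda_k$, yields
$$\langle h_{\pol}, g\rangle_k \;=\; \langle g, h_{\pol}\rangle_k \;=\; \langle x^0\rangle\, g(x_1^{-1},\ldots,x_k^{-1})\, s_\lambda(x_1^{-1},\ldots,x_k^{-1})\, f(x_1,\ldots,x_k).$$
Applying Proposition \ref{prop: Hall constant term}(b) to the other side gives
$$\langle f, s_\lambda g\rangle_k \;=\; \langle x^0\rangle\, f(x_1^{-1},\ldots,x_k^{-1})\, s_\lambda(x_1,\ldots,x_k)\, g(x_1,\ldots,x_k).$$

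To conclude, I would observe that these two constant-term expressions coincide via the Laurent involution $x_i \mapsto x_i^{-1}$. This involution sends a Laurent monomial $x^\alpha$ to $x^{-\alpha}$ and therefore preserves the coefficient of $x^0$. Applied to the symmetric Laurent polynomial $g(x^{-1})\, s_\lambda(x^{-1})\, f(x)$, it produces $g(x)\, s_\lambda(x)\, f(x^{-1}) = f(x^{-1})\, s_\lambda(x)\, g(x)$, which is exactly the integrand in the second expression. Hence the two constant terms agree, and the uniqueness of $s_{\lambda}^{\perp,k} f$ forces $s_{\lambda}^{\perp,k} f = h_{\pol}$, as desired. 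I do not anticipate any real obstacle: the argument is a direct unpacking of Proposition \ref{prop: Hall constant term}, and the only minor check is that $h$ genuinely lies in $\Lambda_k^{\pm}$, which is immediate from symmetry of $s_\lambda$ and $f$ in $x_1,\ldots,x_k$.
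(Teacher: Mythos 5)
Your proof is correct and follows essentially the same route as the paper: pair the candidate with an arbitrary $g\in\Lambda_k$, use Proposition \ref{prop: Hall constant term}(c) to write that pairing as a constant term, and identify it with $\langle f, s_\lambda g\rangle_k$ via part (b). The only cosmetic difference is that the paper applies part (b) with the arguments already matched, whereas you compute both constant terms separately and reconcile them with the $x_i\mapsto x_i^{-1}$ involution; both are valid.
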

\begin{proof}
By Proposition \ref{prop: Hall constant term} we have for all $g\in \Lambda_k$
$$
\langle g,\left[s_{\lambda}(x_1^{-1},\ldots,x_k^{-1})f(x_1,\ldots,x_k)\right]_{\pol}\rangle_{k}=$$
$$\langle x^0\rangle g(x_1^{-1},\ldots,x_k^{-1})s_{\lambda}(x_1^{-1},\ldots,x_k^{-1})f(x_1,\ldots,x_k)=
\langle gs_{\lambda},f\rangle_{k},
$$
and the result follows.
\end{proof}

\subsection{Rational parking functions}

We consider rational Dyck paths of height $K$ and width $k$, that stay weakly above the northeast diagonal in the grid.  A \textbf{word parking function} is a labeling of the vertical runs of the Dyck path by positive integers such that the labeling strictly increases up each vertical run (but letters may repeat between columns; hence ``word'' parking function).  We let $\WPF_{K,k}$ be the set of word parking functions whose path is a rational Dyck path in the $K\times k$ grid.

\begin{defn}
 The \textbf{area} of an element of $\WPF_{K,k}$ is the number of whole boxes lying between the path and the diagonal, so that the diagonal does not pass through the interior of the box.
\end{defn}

\begin{defn}
The \textbf{dinv} statistic (for ``diagonal inversions'') on $\WPF_{K,k}$ is defined as $$\dinv(P)=\pathdinv(D)+\tdinv(P)-\maxtdinv(D)$$ where $D$ is the Dyck path of $P$. We define each of these three quantities separately below.
 \end{defn}
 To define $\pathdinv(D)$, recall that the \textit{arm} of a box above a Dyck path in the $K\times k$ grid is the number of boxes to its right that still lie above the Dyck path.  The \textit{leg} is the number of boxes below it that still lie above the Dyck path.  

 Below, we use Cartesian coordinates in the first quadrant for both the boxes and points in the grid, where the coordinates of a box are the coordinates of its lower left corner.  In particular,  the lower left-most point (and box) in the grid has coordinates $(0,0)$.
\begin{defn}\label{def:arm-leg}
    The \textbf{pathdinv} of Dyck path $D$ from $(0,0)$ to $(k,K)$ (with $k|K$) is the number of boxes $b$ above the Dyck path of $P$ for which $$\frac{\arm(b)}{\leg(b)+1}\leq k/K< \frac{\arm(b)+1}{\leg(b)}$$
\end{defn}

\begin{rmk}
    We call this statistic $\pathdinv$ here to emphasize that it only depends on the path shape, and to distinguish it from $\dinv$ of the parking function.  It was simply called $\dinv$ in \cite{RationalShuffle}.
\end{rmk}

    A \textbf{diagonal} in the $K$ by $k$ grid, where $k|K$, is a set of boxes that pairwise differ by an integer multiple of the vector $(1,K/k)$. The \textbf{main diagonal} of the $K$ by $k$ rectangle is the line between $(0,0)$ and $(k,K)$.

\begin{defn}
    A pair of boxes $a,b$ in a $K$ by $k$ grid (with $k|K$) is an \textbf{attacking pair} if and only if either:
    \begin{itemize}
        \item $a$ and $b$ are on the same diagonal, with $a$ to the left of $b$, or 
        \item $a$ is one diagonal below $b$, and to the right of $b$.
    \end{itemize}
\end{defn}

\begin{rmk}
    When boxes are known to be labeled, we often use the name of the box and its label interchangeably, as in the definition below. 
\end{rmk}

\begin{defn}\label{def:tdinv}
The \textbf{tdinv} of $P\in \WPF_{K,k}$ is the number of attacking pairs of labeled boxes $a,b$ in $P$ such that $a<b$.  
\end{defn}

\begin{defn}\label{def:maxtdinv}
    The \textbf{maxtdinv} of a Dyck path $D$ is the largest possible $\tdinv$ of any parking function of shape $D$.  
\end{defn}

\begin{example}
      The right hand diagram in Figure \ref{fig:PF-example} has the following $\tdinv$ pairs: 
      \begin{itemize}
          \item Two pairs of the form $(2,3)$ on the main diagonal
          \item Pairs $(2,5), (3,5), (3,6)$ between the main diagonal and the second diagonal.
          \item A pair $(5,6)$ on the second diagonal.
          \item A pair $(5,6)$ between the second and third diagonal.
          \item A pair $(1,8)$ between the third and fourth diagonal.
      \end{itemize}
    Thus $\tdinv$ is equal to 8 in this rational parking function.  To compute $\maxtdinv$, we include all other attacking pairs of labels, even if they are not in increasing order.  Thus $\maxtdinv$ is $16$.
\end{example}

Note that $\mathrm{maxtdinv}(D)$ can be achieved by taking $\mathrm{tdinv}(D)$ of the labeling of $D$ obtained by labeling across diagonals starting from the bottom-most diagonal and moving upwards. Thus, $\mathrm{maxtdinv}(D)$ is the number of attacking pairs $a,b$ such that $a$ and $b$ are boxes directly to the right of an up-step of $D$.

\section{Skewing formula}
\label{sec: skewing}

\subsection{Delta and Shuffle Theorems}

We denote by $\Lambda(q,t)$ and $\Lambda_k(q,t)$ the spaces of symmetric functions in infinitely many (resp. $k$) variables with coefficients in rational functions in $q$ and $t$. We can extend the Hall inner product from $\Lambda$ to $\Lambda(q,t)$, and from $\Lambda_k$ to $\Lambda_k(q,t)$ and apply the results of Section \ref{sec: background} verbatim.

The following result was conjectured in \cite{HRW} and proved independently in \cite{BHMPS} and \cite{DAdderio}:

\begin{thm}[Rise Delta Theorem~\cite{BHMPS,DAdderio}] \label{thm:rise}
We have
\[
\Delta'_{e_{k-1}} e_n = \sum_{P\in \LD_{n,k}^{\mathrm{stack}}} q^{\area(P)} t^{\hdinv(P)} x^P.
\]
\end{thm}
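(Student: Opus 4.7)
The plan is to deduce Theorem \ref{thm:rise} by combining the Rational Shuffle Theorem of Mellit with the Schur-skewing identity of Theorem \ref{thm:IntroSkewing}, showing that this skewing operation matches the combinatorial formulas on each side. Concretely, I would apply $s_{(k-1)^{n-k}}^{\perp}$ to both sides of the Rational Shuffle Theorem identity $E_{K,k}\cdot 1 = \pm \sum_{\pi \in \WPF_{K,k}} t^{\area(\pi)} q^{\dinv(\pi)} x^\pi$. The left-hand side becomes $\Delta'_{e_{k-1}} e_n$ by Theorem \ref{thm:IntroSkewing}, so the entire task reduces to the combinatorial identity
\[
s_{(k-1)^{n-k}}^{\perp}\sum_{\pi\in \WPF_{K,k}}t^{\area(\pi)}q^{\dinv(\pi)}x^{\pi}=\sum_{P\in \LD_{n,k}^{\stack}} t^{\area(P)} q^{\hdinv(P)} x^P,
\]
which is Theorem \ref{thm: combi main}.

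To attack this combinatorial identity, I would first group terms by their underlying Dyck path $D$ and, within each column, by the number of ``big'' labels (those that get removed by the map in Figure \ref{fig:map-F}) versus ``small'' labels. This splits each column into an initial ``small'' piece of sizes $\underline{s}$ at the bottom and a top ``big'' piece of sizes $\underline{b}$. The key structural observation is that, with this splitting, the $\tdinv$ statistic on $\WPF_{K,k}$ and the stacked analogue on $\LD_{n,k}^{\stack}$ each factor into contributions from the small boxes, the big boxes, and the interaction between them, which lets me write each side as a sum over $(D,\underline{b},\underline{s})$ of products involving LLT-type generating functions $f_{D,\underline{b}}$ and $f_{D,\underline{s}}$.

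Next I would carry out two parallel calculations, as sketched in the introduction. On the skewed Rational Shuffle side, I would push $s_{(k-1)^{n-k}}^{\perp}$ through the factorization, using the fact that $s_{(k-1)^{n-k}}$ has only $n-k$ parts and that $s_\lambda^{\perp}$ respects the Hall inner product, to obtain the coefficient $\langle f_{D,\underline{b}}[X;q], s_{(k-1)^{n-k}}\rangle$ of each $f_{D,\underline{s}}$. On the Rise Delta side, the combinatorial formula factors similarly, but now the coefficient of $f_{D,\underline{s}}$ comes out to a single monomial $q^{c_{D,\underline{b}}}$ read off directly from $\hdinv$ and the stacking data. This reduces the whole problem to the LLT identity
\[
\langle f_{D,\underline{b}}[X;q], s_{(k-1)^{n-k}}\rangle = q^{c_{D,\underline{b}}}
\]
for every choice of $(D,\underline{b})$.

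This last identity is the main obstacle, and the step where essentially all of the combinatorial work lives. The plan is to expand $s_{(k-1)^{n-k}}$ by the Jacobi--Trudi formula as a signed sum of products of complete homogeneous symmetric functions, which converts the inner product into a signed sum over tuples of fillings produced by the LLT combinatorics. I would then construct a sign-reversing involution on these fillings, cancelling all but a single fixed filling, and finally verify by induction on the number of columns (or on the shape $(D,\underline{b})$) that the surviving filling contributes the precise power $q^{c_{D,\underline{b}}}$. The delicate point is designing the involution so that its unique fixed point correctly encodes the constant $c_{D,\underline{b}}$ determined by the $\hdinv$ statistic; the rest of the argument should then assemble routinely into a proof of Theorem \ref{thm:rise}.
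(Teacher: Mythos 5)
Your proposal is correct and follows essentially the same route as the paper: the Rise Delta Theorem is deduced from the Rational Shuffle Theorem together with the skewing identity of Theorem \ref{thm:IntroSkewing}, with the real work concentrated in the combinatorial identity of Theorem \ref{thm: combi main}, proved exactly as you outline via the big/small decomposition, the reduction to $\langle f_{D,\underline{b}}[X;q], s_{(k-1)^{n-k}}\rangle = q^{c_{D,\underline{b}}}$, a Jacobi--Trudi expansion, a sign-reversing involution, and an inductive computation of the fixed point's contribution. This matches the commutative diagram in Corollary \ref{cor: commutative-diagram} and the structure of Section \ref{sec: combinatorics}.
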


 See Section \ref{sec: stacks} for definitions of the combinatorial objects and statistics on the right-hand side above.

Here the $\Delta'_{e_{k-1}}$ is the operator on $\Lambda(q,t)$ which is diagonal in the modified Macdonald basis $\widetilde{H}_{\lambda}(x;q,t)$ \cite{HHL} with eigenvalues
$$
\Delta'_{e_{k-1}}\widetilde{H}_{\lambda}=e_{k-1}[B'_{\lambda}]\widetilde{H}_{\lambda},\quad B'_{\lambda}=\sum_{\square\in \lambda,\,\square\neq (0,0)}q^{a'(\square)}t^{\ell'(\square)}
$$
Here $e_{k-1}[B_\lambda']$ denotes the operation of evaluating $e_{k-1}$ at the $|\lambda|-1$ terms of $B_\lambda'$, where we consider $e_{k-1}$ as a symmetric function in $|\lambda|-1$ variables.  We also write $a'(\square)$ and $\ell'(\square)$ to denote the co-arm and co-leg of the box $\square$ respectively, which are the number of boxes to their left and below.  At $k=n$ we have 
$$
e_{k-1}[B'_{\lambda}]=\prod_{\square\in \lambda}q^{a'(\square)}t^{\ell'(\square)},
$$
so that $\Delta'_{e_{n-1}}$ coincides with the celebrated $\nabla$ operator \cite{Nabla} and Delta Theorem specializes to the Shuffle Theorem conjectured in \cite{HHLRU} and proved in \cite{CM}. 

We will need a generalization of the Shuffle Theorem known as the Compositional Rational Shuffle Theorem, conjectured in \cite{RationalShuffle} and proved in \cite{Mellit}. To state it, we need to recall some constructions related to the {\bf Elliptic Hall Algebra} $\cE_{q,t}$. 

The algebra $\cE_{q,t}$ has generators $P_{a,b},(a,b)\in \mathbb{Z}^2$ satisfying certain complicated relations \cite{EHA}. We will not need these relations but record some useful properties:
\begin{itemize}
\item[(a)] If $(a',b')=(ca,cb)\in \mathbb{Z}^2$ for some rational constant $c>0$, then $[P_{a,b},P_{a',b'}]=0$. In particular, for each pair $(a,b)$ with $\mathrm{gcd}(a,b)=1$ (or, equivalently, for each {\em slope} $b/a\in \mathbb{Q}$) there is a commutative subalgebra of $\cE_{q,t}$ generated by $P_{ca,cb}$ for all integers $c\ge 1$.

\item[(b)] A certain extension of the group $\mathrm{SL}(2,\mathbb{Z})$ acts on $\cE_{q,t}$ by algebra automorphisms \cite[Corollary 3.9,Lemma 5.3]{EHA}. If $M\in \mathrm{SL}(2,\mathbb{Z})$ then the corresponding automorphism sends the generator $P_{a,b}$ to $P_{M(a,b)}$, up to a certain monomial in $q,t$.
\item[(c)] There is an anti-automorphism $\psi$ of $\cE_{q,t}$ such that $\psi(P_{a,b})=P_{b,a}$.   
\item[(d)] The algebra $\cE_{q,t}$ acts on $\Lambda(q,t)$. The operator $P_{a,b}$ has degree $a$, that is,
$$
\deg P_{a,b}(f)=\deg f+a.
$$
The operators $P_{a,0}$ act on $\Lambda(q,t)$ by multiplication by power sums $p_a$ (up to a scalar factor).
\end{itemize}

Given a symmetric function $F\in \Lambda(q,t)$, we can transform it to an operator $F_{b/a}$ in $\cE_{q,t}$ as follows: first expand $F$ in power sums $p_i$, then replace each $p_i$ by $P_{ia,ib}\in \cE_{q,t}$. Since $P_{ia,ib}$ pairwise commute, we obtain a well-defined element of $\cE_{q,t}$ of slope $b/a$. Alternatively, we can find $M\in \mathrm{SL}(2,\mathbb{Z})$ 
such that $M(1,0)=(a,b)$, then 
the corresponding automorphism of $\cE_{q,t}$ sends $F$ (thought of as a multiplication operator and hence an element of $\cE_{q,t}$ of slope zero) to $F_{b/a}$.

\begin{defn}
Suppose $\mathrm{GCD}(a,b)=d$. We define the operator $E_{a,b}\in \cE_{q,t}$ as the result of rotation of the elementary symmetric function $e_d$ to slope $b/a$ as above.
\end{defn}

In particular, $E_{a,0}$ is the operation of multiplication by the elementary symmetric function $e_a$.  The formula for the symmetric function $E_{a,b}\cdot 1$ was conjectured   in \cite[Conjecture 3.2]{RationalShuffle}   and proved in \cite{Mellit}, and we restate its specialization to the case $(a,b)=(K,k)$ here.

\begin{thm}[Rational Shuffle Theorem \cite{Mellit}] \label{thm:E}
    $$E_{K,k}\cdot 1 = \sum_{P\in \WPF_{K,k}} q^{\area(P)}t^{\dinv(P)}x^P.$$
\end{thm}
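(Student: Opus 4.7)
The plan is to invoke the specialization to $(a,b)=(K,k)$ of the Compositional Rational Shuffle Theorem, which was conjectured by Bergeron, Garsia, Leven and Xin in \cite{RationalShuffle} and proved in full generality by Mellit in \cite{Mellit}. All that is needed to get the statement above from the general theorem is to verify that the definitions of $\area$, $\pathdinv$, $\tdinv$ and $\maxtdinv$ for parking functions in a $K\times k$ rectangle (as given in Section~\ref{sec: background}) agree with the conventions of \cite{RationalShuffle,Mellit} when $\gcd(a,b)=k$; this matching is essentially built into the definitions.

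If one instead wanted a self-contained proof in this specific case, I would proceed through the shuffle algebra presentation of $\cE_{q,t}$. First I would write $E_{K,k}\cdot 1$ as a Negut-style symmetrized rational function in $k$ variables $x_1,\ldots,x_k$, whose poles are governed by a product of two-variable factors $\zeta(x_i,x_j)$ for $i<j$. Next I would expand the symmetrization over orderings of the $x_i$ and recognize each resulting term as the contribution of a single rational Dyck path $D$ in the $K\times k$ rectangle: the shape of $D$ records which $\zeta$-factors cancel against the polynomial prefactor at each stage of the symmetrization. Finally, I would extract the $\area$ grading from the polynomial prefactor of the shuffle element and the $\dinv$ grading from the $q$-exponents that survive the shuffle cancellations, and match these to $\area(P)$ and $\pathdinv(D)+\tdinv(P)-\maxtdinv(D)$ respectively.

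The main obstacle is the last step, namely matching $\dinv$ to the $q$-exponents produced by the shuffle product. Because $\dinv$ depends on the full diagonal structure of the $K\times k$ rectangle through the arm/leg test of Definition~\ref{def:arm-leg} and the attacking-pair rule on diagonals spaced by $(1,K/k)$, identifying it with shuffle-algebra exponents requires a delicate induction on $k$; this is essentially the content of Mellit's Dyck-path-algebra argument, and it is where all of the difficulty lies. For the purposes of the present paper it therefore seems cleanest simply to cite \cite{Mellit} and record the $(K,k)$ specialization as Theorem~\ref{thm:E}, which is how I would write up the proof here.
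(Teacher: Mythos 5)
Your proposal matches the paper exactly: Theorem~\ref{thm:E} is recorded there as a known result, obtained by specializing the Compositional Rational Shuffle Theorem of \cite{RationalShuffle}, proved by Mellit \cite{Mellit}, to the case $(a,b)=(K,k)$, with no independent proof given. Your sketch of a hypothetical self-contained shuffle-algebra argument is a reasonable outline of where the real difficulty lies, but the paper, like you, simply cites \cite{Mellit}.
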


\subsection{Shuffle algebra expressions}

We will also need another incarnation of $\cE_{q,t}$ that is known as the {\em Shuffle algebra}  $\cS$. In short, $\cS=\bigoplus_{m}\cS_m$ is spanned by symmetric Laurent polynomials $f(x_1,\ldots,x_m)$ satisfying so-called ``wheel conditions''. The multiplication is given by the {\em shuffle product}
$$
f(x_1,\ldots,x_m)\star g(x_1,\ldots,x_{\ell})=\sum_{w\in S_{m+\ell}}w\left[f(x_1,\ldots,x_m) g(x_{m+1},\ldots,x_{m+\ell})\prod_{\stackrel{1\le i\le m,}{m+1\le j\le k+\ell}}\Gamma(x_i/x_j)\right]
$$
where 
$$
\Gamma(x)=\frac{(1-qtx)}{(1-x^{-1})(1-qx)(1-tx)}.
$$
We refer to \cite{BHMPS,BHMPS2,Negut} on more details and a specific isomorphism relating $\cS$ and $\cE_{q,t}$. In our normalization of the shuffle product and the isomorphism we follow the conventions of \cite{BHMPS}. In particular, we have the following.

\begin{thm}\cite[Proposition 6.13]{Negut}
\label{thm: negut}
Under the isomorphism relating $\cS$ and $\cE_{q,t}$, the element $E_{a,b}\in \cE_{q,t}$ corresponds to 
$$
\phi_{a,b}=\frac{x_1^{S_1}\cdots x_a^{S_a}}{\prod_{i=1}^{a-1} (1-qt x_i/x_{i+1})}\in \cS_{a}
$$
where 
$$
S_i=\left\lceil \frac{ib}{a}\right\rceil-\left\lceil \frac{(i-1)b}{a}\right\rceil.
$$
\end{thm}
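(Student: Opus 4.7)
The plan is to prove the identification of $\phi_{a,b}$ with the image of $E_{a,b}$ by exploiting the $\mathrm{SL}(2,\mathbb{Z})$ action on both $\cE_{q,t}$ and $\cS$. By definition, $E_{a,b}$ is the $\mathrm{SL}(2,\mathbb{Z})$-rotate of $e_d \in \cE_{q,t}$ (with $d = \gcd(a,b)$, viewed as a slope-$0$ multiplication operator) to slope $b/a$, so it suffices to carry out the corresponding rotation inside the shuffle algebra and compare.

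First I would set up the base case: verify that the image of $e_d \in \cE_{q,t}$ in the shuffle algebra $\cS_d$ is the slope-zero element $\phi_{d,0} = 1/\prod_{i=1}^{d-1}(1 - qt\, x_i/x_{i+1})$. This reduces to comparing the shuffle product of $d$ copies of the degree-one generator (essentially $x_1 \in \cS_1$) against the power-sum expansion of $e_d$, using the explicit isomorphism between $\cS$ and $\cE_{q,t}$ in the normalization of \cite{BHMPS}. Next, I would pick $M \in \mathrm{SL}(2,\mathbb{Z})$ with $M(d,0) = (a,b)$ and analyze its induced action on $\cS$. The central combinatorial input is that the induced action sends the constant numerator in $\cS_d$ to a numerator monomial in $\cS_a$ whose exponents match the ceiling increments $S_i = \lceil ib/a \rceil - \lceil (i-1)b/a \rceil$; geometrically, this records the ``staircase'' monomial sitting on the slope-$b/a$ line in the weight lattice.

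The main obstacle will be that the $\mathrm{SL}(2,\mathbb{Z})$ action on $\cS$ is not given by a simple closed formula on rational functions; it is induced from the action on $\cE_{q,t}$ through the isomorphism and must be analyzed indirectly. To control it, I would combine two rigidity facts. The first is the wheel-condition characterization of $\cS_a$: elements whose denominator is the triangular product $\prod_{i=1}^{a-1}(1 - qt\, x_i/x_{i+1})$ and whose numerator is a single Laurent monomial of prescribed multidegree are determined up to a $q,t$-scalar. The second is that $E_{a,b}$, being an $\mathrm{SL}(2,\mathbb{Z})$-rotate of $e_d$, must land in the span of such elements of the correct weight; hence its image is forced to be a scalar multiple of $\phi_{a,b}$. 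The final step is to fix the scalar to $1$ by matching the coefficient of one distinguished monomial (for example the lex-leading monomial) with the corresponding image of the constant $1 \in \cS_d$ under the matrix $M$, which is a direct lattice computation.
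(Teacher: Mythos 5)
This statement is not proved in the paper at all: it is imported verbatim as \cite[Proposition 6.13]{Negut} (modulo the change of normalization explained via \cite[Proposition 3.6.1]{BHMPS2}), so there is no in-paper argument to compare against. Judged on its own terms, your sketch has the right overall shape --- identify the slope-$0$ element, rotate by $\mathrm{SL}(2,\mathbb{Z})$, and pin down the image by a rigidity argument --- but the rigidity step, which is where all the difficulty lives, is asserted rather than proved. The graded piece of $\cS_a$ in the total degree of $\phi_{a,b}$ is in general far from one-dimensional (its dimension grows with $a$ and $b$), so knowing that $E_{a,b}$ is a rotate of $e_d$ of the correct bidegree does \emph{not} force its shuffle image into the span of a single function with triangular denominator and staircase numerator. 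What actually singles out $\phi_{a,b}$ in Negu\c{t}'s work is the slope filtration on $\cS$ together with an evaluation/specialization argument (the Gordon-type filtration), which shows that the subspace of elements of $\cS_a$ of slope at most $b/a$ in the extremal degree is one-dimensional; that one-dimensionality is the main theorem, not a formal consequence of the group action. Your first ``rigidity fact'' as stated is vacuous (a fixed monomial over a fixed denominator is of course determined up to scalar), and your second one is precisely the unproved claim.

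Two further points. First, the base case is not as routine as you suggest: $e_d$ is not the $d$-fold shuffle power of the degree-one generator, and verifying that its image is $1/\prod_{i=1}^{d-1}(1-qt\,x_i/x_{i+1})$ already requires either the same filtration machinery or an explicit symmetrization identity. Second, the $\mathrm{SL}(2,\mathbb{Z})$ action is defined on $\cE_{q,t}$, not on $\cS$, and the rotation taking slope $0$ to slope $b/a$ does not preserve the positive half that $\cS$ models, so ``carrying out the rotation inside the shuffle algebra'' needs justification before the comparison can even be set up. If you want a self-contained proof you should follow Negu\c{t}'s route: establish the wheel-condition description of $\cS$ as the image of $\cE_{q,t}^+$, introduce the slope filtration, prove the one-dimensionality of the extremal graded pieces by specialization, and only then match leading coefficients as in your final step.
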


Here we use the conventions of \cite[Proposition 3.6.1]{BHMPS2} which is slightly different from the original conventions of \cite{Negut} due to the different normalization of shuffle product.

Let $f(x_1,\ldots,x_k)$ be a Laurent polynomial (or power series) in $x_1,\ldots,x_k$. We define

$$
\sigma(f)=\sum_{w\in S_k}w\left(\frac{f}{\prod_{i<j}(1-x_j/x_i)}\right),
$$
and 
$$
H_{q,t}^{k}(f)=\sigma\left(\frac{f\prod_{i<j}(1-qtx_i/x_j)}{\prod_{i<j}(1-qx_i/x_j)(1-tx_i/x_j)}\right).
$$
For any $f$ the function $H_{q,t}^{k}(f)$ is symmetric in $x_1,\ldots,x_k$.
Following \cite{BHMPS,BHMPS2} we will always implicitly expand the denominators as geometric series
$$
\frac{1}{(1-qx_i/x_j)}=\sum_{b=0}^{\infty} q^bx_i^{b}x_j^{-b},\ \frac{1}{(1-tx_i/x_j)}=\sum_{b=0}^{\infty} t^bx_i^{b}x_j^{-b}
$$
and interpret $H_{q,t}^{k}(f)$ as a Laurent power series in a certain completion of $\Lambda_k^{\pm}$. As before, we denote by $H_{q,t}^{k}(f)_{\pol}$ the projection to (a certain completion of) $\Lambda_k$. 
We will need the following easy observation:
\begin{prop}
\label{prop: hqt symmetric}
Assume that $h$ is a symmetric function in $x_1,\ldots,x_k$ and $f$ is arbitrary. Then 
$$
H_{q,t}^k(hf)=hH_{q,t}^k(f).
$$
\end{prop}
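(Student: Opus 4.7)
The plan is a one-step verification: unfold the definitions of $\sigma$ and $H_{q,t}^k$, and then pull $h$ outside the symmetrization using the fact that $h$ is $S_k$-invariant. Concretely, I would write
$$
H_{q,t}^k(hf)=\sum_{w\in S_k}w\!\left(\frac{hf\prod_{i<j}(1-qtx_i/x_j)}{\prod_{i<j}(1-x_j/x_i)(1-qx_i/x_j)(1-tx_i/x_j)}\right),
$$
and observe that since $h\in\Lambda_k$, we have $w(h)=h$ for every $w\in S_k$. The $S_k$-action is multiplicative, so each summand factors as
$$
w(h)\cdot w\!\left(\frac{f\prod_{i<j}(1-qtx_i/x_j)}{\prod_{i<j}(1-x_j/x_i)(1-qx_i/x_j)(1-tx_i/x_j)}\right)=h\cdot w(\,\cdot\,).
$$
Summing over $w\in S_k$ then pulls $h$ out of the sum, giving $H_{q,t}^k(hf)=h\,H_{q,t}^k(f)$.

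The only point requiring a small amount of care---and the closest thing to an obstacle---is that $H_{q,t}^k(f)$ is not an element of $\Lambda_k^{\pm}$ but is instead interpreted in a certain completion in which the denominators $(1-qx_i/x_j)^{-1}$ and $(1-tx_i/x_j)^{-1}$ are expanded as geometric series as in the text. I would therefore check that multiplication by the polynomial $h\in\Lambda_k$ extends continuously to this completion, and that the $S_k$-action likewise acts term by term on the geometric expansion, so that the factorization above holds not just formally but in the completion. Both facts are routine since $h$ has finite support in each graded piece and permutations act by relabeling monomials, after which the argument of the first paragraph goes through unchanged.
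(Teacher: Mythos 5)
Your proof is correct and is exactly the standard argument; the paper states this as an ``easy observation'' and omits the proof entirely, so your unfolding of $\sigma$ and $H_{q,t}^k$ together with the $S_k$-invariance of $h$ is precisely the intended reasoning. The remark about multiplication by $h$ acting term by term on the geometric-series completion is a reasonable extra care, though the paper treats it as implicit.
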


The following result relates the operator $H_{q,t}^{k}$ to shuffle algebra $\cS$.

\begin{thm}\cite[Proposition 3.4.2]{BHMPS}
\label{thm: psi eval shuffle}
Suppose $f$ is an element of the shuffle algebra $\cS\simeq \cE_{q,t}$ and, as above, $\psi$ is an anti-automorphism of $\cE_{q,t}$ such that $\psi(P_{a,b})=P_{b,a}$. Then 
$$
\pi_k(\omega \psi(f)(1))=(\omega\psi(f))(1)(x_1,\ldots,x_k)=H_{q,t}^k(f)_{\pol}.
$$
\end{thm}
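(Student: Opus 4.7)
The plan is to prove the identity by reduction to a generating set of the shuffle algebra $\cS$ and then propagating it through the shuffle product. Both sides of the claimed equation are $\bQ(q,t)$-linear in $f$, and the shuffle algebra is generated under $\star$ by the Negut elements $\phi_{a,b}$ of Theorem~\ref{thm: negut} (equivalently, by the degree-one subspace $\cS_1$ together with iterated shuffle products). It therefore suffices to (i) verify the identity on this generating set, and (ii) show that if the identity holds for $f$ and $g$, then it holds for $f\star g$.

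For step (i) the degree-one case is the cleanest entry point. An element $f\in \cS_1$ is just a Laurent monomial $x^n$, which under $\cS\simeq \cE_{q,t}$ corresponds (up to a known scalar) to the generator $P_{1,n}$. The anti-automorphism $\psi$ sends this to $P_{n,1}$, and $(\omega\psi(f))(1) = \omega(P_{n,1}\cdot 1)$ is computed directly from the Fock-space action of $\cE_{q,t}$ on $\Lambda(q,t)$ (it is essentially a power-sum/Hall--Littlewood expression). On the other side, one expands the denominators of $H_{q,t}^k(x^n)$ as geometric series following the conventions of~\cite{BHMPS,BHMPS2} and extracts the polar part; this is a plethystic/residue computation that matches the first calculation term by term.

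The main obstacle, and the technical heart of the proof, is step (ii). Since $\psi$ is an anti-automorphism one has $\psi(f\star g) = \psi(g)\,\psi(f)$ in $\cE_{q,t}$, so
$$
(\omega\psi(f\star g))(1) = (\omega\psi(g))\bigl((\omega\psi(f))(1)\bigr).
$$
On the shuffle side, the kernel $\Gamma(x_i/x_j)$ appearing in the definition of $f\star g$ matches the kernel in the definition of $H_{q,t}^k$, so one expects the iterated partial symmetrization in $H_{q,t}^k(f\star g)$ to factor as $H_{q,t}^k(f)$ followed by the action of $g$ as an operator on $\Lambda_k$. Here Proposition~\ref{prop: hqt symmetric} is crucial: it allows symmetric factors to be pulled through $H_{q,t}^k$, which is what makes the identification of iterated symmetrization with operator composition go through. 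The delicate points are the careful bookkeeping of polar versus non-polar parts upon restriction to $k$ variables, and the sign/normalization conventions translating between the shuffle-product definition, the $\cE_{q,t}$-action, and the involution $\omega$. Once this multiplicative compatibility is established, iteration on a generating set, combined with the base case from step (i), yields the identity for arbitrary $f\in \cS$.
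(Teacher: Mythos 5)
This theorem is not proved in the paper at all: it is quoted verbatim from \cite[Proposition 3.4.2]{BHMPS} and used as a black box, so there is no internal argument to compare against. Judged on its own, your outline follows the standard route (and, at a high level, the route of the cited source): verify the identity on degree-one elements and propagate it through the shuffle product. Two prerequisites you gloss over should at least be flagged: that $\cS$ is generated by $\cS_1$ under $\star$ is itself a nontrivial theorem (Schiffmann--Vasserot, Negu\cb{t}), and the theorem as used in the paper implicitly takes $f\in\cS_k$ with the same $k$ as in $H^k_{q,t}$ and $\pi_k$, so your induction is really an induction on the number of shuffle variables.

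The genuine gap is step (ii), and it is not merely a matter of bookkeeping. Your inductive hypothesis for $f\in\cS_m$ only gives $\pi_m(\omega\psi(f)(1))_{\phantom{.}}=H^m_{q,t}(f)_{\pol}$, i.e.\ the restriction of $\omega\psi(f)(1)$ to $m$ variables, and only its polynomial part. To handle $f\star g\in\cS_{m+\ell}$ via $\psi(f\star g)=\psi(g)\psi(f)$ you must apply $\omega\psi(g)$ to the \emph{full} symmetric function $\omega\psi(f)(1)\in\Lambda(q,t)$ and then restrict to $m+\ell$ variables; the restriction to $m$ variables kills all Schur components with more than $m$ rows, and taking $(\cdot)_{\pol}$ discards further terms, so the inductive hypothesis is too weak to determine the left-hand side for $f\star g$. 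Nor does the polar projection commute with applying $\omega\psi(g)$ in any obvious way. The fix --- and what the cited source actually does --- is to prove a stronger statement by induction: an explicit symmetrization/residue formula for the action of $\omega\psi(g)$ on an \emph{arbitrary} element of $\Lambda_k$ (not just on $1$), from which the evaluation at $1$ and the compatibility with $(\cdot)_{\pol}$ follow. Your proposal names this as ``the delicate point'' and then assumes it; that assumption is essentially the whole theorem. Also, Proposition~\ref{prop: hqt symmetric} will not carry the weight you assign it: it only allows fully symmetric factors to be pulled out of $H^k_{q,t}$ (which is how the paper uses it, in Lemma~\ref{lem: perp Hqt}), and it does not by itself factor the $(m+\ell)$-fold symmetrization into an $m$-fold one followed by an operator.
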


(In the statement above, recall that we write $\pi_k:\Lambda\to \Lambda_k$ for the restriction to the first $k$ variables.)  Next, we write the expressions for the Delta conjecture from \cite{BHMPS}.

\begin{thm}\cite[Theorem 4.4.1]{BHMPS}
For $0\le l<m\le N$ we have
$$
(\omega \Delta_{h_l}\Delta'_{e_{m-l-1}}e_{N-l})(x_1,\ldots,x_m)=H_{q,t}^{m}(\phi(x))_{\pol},
$$
where 
$$
\phi(x)=\frac{x_1\cdots x_m}{\prod (1-qt x_i/x_{i+1})}h_{N-m}(x_1,\ldots,x_m)e_{l}(x_2^{-1},\ldots,x_m^{-1}).
$$
\end{thm}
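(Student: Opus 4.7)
The plan is to reduce the theorem to an application of Theorem~\ref{thm: psi eval shuffle} with $k = m$.  That statement says $\pi_m(\omega\psi(f)(1)) = H^m_{q,t}(f)_{\pol}$ for any $f \in \cS$, so it suffices to exhibit an element $f \in \cS_m$ satisfying two properties: (i) $\omega\psi(f)(1)$ evaluates in the first $m$ variables to $(\omega\Delta_{h_l}\Delta'_{e_{m-l-1}}e_{N-l})(x_1,\ldots,x_m)$, and (ii) $H^m_{q,t}(f)_{\pol} = H^m_{q,t}(\phi(x))_{\pol}$. The cleanest strategy is to show that $f = \phi(x)$ itself satisfies (i); then (ii) is tautological.

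For (i), I would decompose $\phi(x)$ into three shuffle-algebra factors: the ``core'' $\frac{x_1 \cdots x_m}{\prod_{i=1}^{m-1}(1-qtx_i/x_{i+1})}$, the symmetric factor $h_{N-m}(x_1,\ldots,x_m)$, and the ``inverse'' factor $e_l(x_2^{-1},\ldots,x_m^{-1}) \in \Lambda_m^{\pm}$.  Theorem~\ref{thm: negut} (with $a = b = m$, giving all $S_i = 1$) identifies the core with the shuffle representative $\phi_{m,m}$ of $E_{m,m} \in \cE_{q,t}$, and by the Rational Shuffle Theorem $E_{m,m}\cdot 1 = \nabla e_m = \Delta'_{e_{m-1}}e_m$.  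This covers the base case $(N,l,m) = (m,0,m)$.  For the general case, the symmetric multiplier $h_{N-m}$ raises the ``input degree'' from $e_m$ to $e_{N-l}$ (so that we are now applying $\Delta'_{e_{m-1}}$ to $e_{N-l}$), which is consistent with Proposition~\ref{prop: hqt symmetric} allowing us to factor $h_{N-m}$ out of $H^m_{q,t}$.  The inverse-variable factor $e_l(x_2^{-1},\ldots,x_m^{-1})$ is then responsible for both the passage from $\Delta'_{e_{m-1}}$ to $\Delta'_{e_{m-l-1}}$ and the action of $\Delta_{h_l}$.

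The principal technical obstacle is verifying this last correspondence rigorously: that multiplying a shuffle element by $e_l$ in inverse variables corresponds, under the isomorphism $\cS \simeq \cE_{q,t}$ and the anti-automorphism $\psi$, to the compositional action of $\Delta_{h_l}$ and the shift in the subscript of $\Delta'_{e_\bullet}$.  I would approach this by writing $\Delta_{h_l}$ and $\Delta'_{e_{m-l-1}}$ via their eigenvalue formulas on the modified Macdonald basis $\widetilde{H}_\lambda$ and matching them to known ``creation operator'' constructions for Macdonald polynomials in the Elliptic Hall algebra (e.g., as developed in \cite{BHMPS, BHMPS2, Negut}), invoking property (b) and the $\mathrm{SL}(2,\mathbb{Z})$-action on $\cE_{q,t}$.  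A careful reconciliation of normalization conventions (as flagged in the remark after Theorem~\ref{thm: negut}) is essential.  Once this identification is pinned down, Theorem~\ref{thm: psi eval shuffle} delivers the claimed equality $(\omega\Delta_{h_l}\Delta'_{e_{m-l-1}}e_{N-l})(x_1,\ldots,x_m) = H^m_{q,t}(\phi(x))_{\pol}$.
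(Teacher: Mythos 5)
First, a point of comparison: the paper does not prove this statement at all --- it is quoted verbatim as \cite[Theorem 4.4.1]{BHMPS}, so there is no internal proof to measure your proposal against. What matters is whether your blind attempt would actually establish the result, and it would not: it has a genuine gap. Your reduction to Theorem~\ref{thm: psi eval shuffle} with $f=\phi(x)$ makes property (ii) tautological, but then property (i) --- that $\omega\psi(\phi)(1)$ equals $\omega\Delta_{h_l}\Delta'_{e_{m-l-1}}e_{N-l}$ --- \emph{is} the theorem. Everything you write after the base case is a restatement of what must be shown, not an argument for it; deferring "the principal technical obstacle" to an unspecified matching of eigenvalue formulas with creation-operator constructions leaves the entire content of the theorem unproved. (You would also need to check that $\phi(x)$, with its inverse-variable factor, actually lies in $\cS_m$, i.e.\ satisfies the wheel conditions, before Theorem~\ref{thm: psi eval shuffle} even applies.)

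Moreover, the heuristic you offer for the factor $h_{N-m}$ is incorrect as stated. Proposition~\ref{prop: hqt symmetric} lets you pull $h_{N-m}(x_1,\ldots,x_m)$ outside $H^m_{q,t}$, so your claim amounts to saying that multiplying the symmetric function $\omega\Delta'_{e_{m-1}}e_{m}$ (restricted to $m$ variables) by $h_{N-m}(x_1,\ldots,x_m)$ produces $\omega\Delta'_{e_{m-1}}e_{N}$ restricted to $m$ variables. That is false: multiplication by $h_{N-m}$ is not how the Delta operators interact with a change in the degree of their argument, and the degrees matching is not evidence of the identity. The actual proof in \cite{BHMPS} proceeds quite differently, via their development of the operators $\nabla$ and $\Delta_f$ inside the shuffle-algebra formalism (Cauchy-type identities and an inductive analysis of the ``tail'' factors), none of which is recoverable from the sketch you give. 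In short: the scaffolding (Theorem~\ref{thm: negut}, Theorem~\ref{thm: psi eval shuffle}, the $(N,l,m)=(m,0,m)$ base case) is sound, but the inductive or structural step that would carry you from $E_{m,m}\cdot 1=\nabla e_m$ to the general $\Delta_{h_l}\Delta'_{e_{m-l-1}}e_{N-l}$ is missing, and the one concrete mechanism you propose for it does not work.
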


\begin{cor}
\label{cor: delta shuffle}
Setting $l=0$, $m=k$, and $N=n$, we get
$$
\pi_k(\omega \Delta'_{e_{k-1}}e_{n})=(\omega \Delta'_{e_{k-1}}e_{n})(x_1,\ldots,x_k)=H_{q,t}^{k}\left(\frac{x_1\cdots x_kh_{n-k}(x_1,\ldots,x_k)}{\prod (1-qt x_i/x_{i+1})}\right)_{\pol}.
$$
\end{cor}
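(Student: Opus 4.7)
The plan is simply to specialize the preceding theorem at the prescribed values $l=0$, $m=k$, $N=n$ and to simplify each factor in turn. This is a routine substitution argument, so the main task is to verify that every piece of the general statement collapses in the expected way.

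First, I would observe that $\Delta_{h_0}$ acts as the identity on $\Lambda(q,t)$: since $\Delta_f$ is defined to be diagonal in the modified Macdonald basis with eigenvalue $f[B_\lambda]$, and since $h_0 = 1$ evaluates to $1$ on every alphabet, every $\widetilde{H}_\lambda$ is fixed by $\Delta_{h_0}$. Hence $\Delta_{h_l}\Delta'_{e_{m-l-1}}e_{N-l}$ becomes $\Delta'_{e_{k-1}}e_n$ upon setting $l=0$, $m=k$, $N=n$.

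Next, I would simplify $\phi(x)$. The factor $e_l(x_2^{-1},\ldots,x_m^{-1})$ becomes $e_0(x_2^{-1},\ldots,x_k^{-1}) = 1$, the factor $h_{N-m}(x_1,\ldots,x_m)$ becomes $h_{n-k}(x_1,\ldots,x_k)$, and the remaining factor $\tfrac{x_1\cdots x_m}{\prod_i (1-qtx_i/x_{i+1})}$ becomes $\tfrac{x_1\cdots x_k}{\prod_i (1-qtx_i/x_{i+1})}$. Plugging these into the conclusion of the theorem yields exactly the right-hand side displayed in the corollary. The equality of the two left-hand sides, $\pi_k(\omega\Delta'_{e_{k-1}}e_n)$ and $(\omega\Delta'_{e_{k-1}}e_n)(x_1,\ldots,x_k)$, is by definition of the projection $\pi_k$ that sets $x_{k+1},x_{k+2},\ldots$ to zero. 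There is no real obstacle here, since the corollary is just a notational specialization of the cited theorem.
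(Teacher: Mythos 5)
Your proposal is correct and matches the paper's (implicit) proof: the corollary is obtained exactly by substituting $l=0$, $m=k$, $N=n$ into the cited theorem, with $\Delta_{h_0}=\mathrm{Id}$, $e_0=1$, and $h_{N-m}=h_{n-k}$ as you verify. Nothing further is needed.
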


\begin{lem}
\label{lem: E shuffle}
We have
$$
\pi_k(\omega E_{K,k}(1))=(\omega E_{K,k}(1))(x_1,\ldots,x_k)=H_{q,t}^{k}\left(\frac{x_1^{n-k+1}\cdots x_k^{n-k+1}}{\prod (1-qt x_i/x_{i+1})}\right)_{\pol}.
$$
\end{lem}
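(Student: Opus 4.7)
The plan is to apply Theorem~\ref{thm: psi eval shuffle} to $f = E_{k,K}$ regarded as an element of $\cS_k \simeq \cE_{q,t}$, using the anti-automorphism $\psi$ to convert it to $E_{K,k}$. This mirrors exactly what was done for $\Delta'_{e_{k-1}}e_n$ in Corollary~\ref{cor: delta shuffle}; the only substantive computation is to identify the shuffle-algebra element corresponding to $E_{k,K}$.

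First I would compute $\phi_{k,K}$ explicitly via Theorem~\ref{thm: negut}, applied with $(a,b) = (k,K)$. Because $k \mid K$ by the definition $K = k(n-k+1)$, the exponents
$$S_i \;=\; \left\lceil iK/k \right\rceil - \left\lceil (i-1)K/k \right\rceil$$
are all equal to $K/k = n-k+1$. Thus
$$\phi_{k,K} \;=\; \frac{x_1^{n-k+1}\cdots x_k^{n-k+1}}{\prod_{i=1}^{k-1}(1-qt\, x_i/x_{i+1})} \;\in\; \cS_k,$$
which is precisely the argument of $H_{q,t}^k$ in the lemma.

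Next I would verify $\psi(E_{k,K}) = E_{K,k}$. Writing $e_k = \sum_\mu c_\mu p_\mu$ in the power-sum basis, the construction of $E_{a,b}$ as a slope-$b/a$ rotation of $e_{\gcd(a,b)}$ gives
$$E_{k,K} \;=\; \sum_\mu c_\mu \prod_j P_{\mu_j,\, \mu_j(n-k+1)},$$
with the product unambiguous since all factors share slope $n-k+1$ (property (a)). Applying the anti-automorphism $\psi$ and using $\psi(P_{a,b}) = P_{b,a}$ from property (c), each factor becomes $P_{\mu_j(n-k+1),\, \mu_j}$ with the product order reversed. The new generators all share slope $1/(n-k+1)$, so property (a) again renders the order immaterial, and the sum agrees with the analogous expansion of $E_{K,k}$. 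Plugging $f = E_{k,K}$ into Theorem~\ref{thm: psi eval shuffle} then yields
$$\pi_k\bigl(\omega\,\psi(E_{k,K})(1)\bigr) \;=\; H_{q,t}^k(\phi_{k,K})_{\pol},$$
and substituting $\psi(E_{k,K}) = E_{K,k}$ on the left and the computed form of $\phi_{k,K}$ on the right produces the desired equality.

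The main potential obstacle is convention-tracking rather than genuine difficulty: one must confirm that the anti-automorphism $\psi$ of property (c) introduces no $q,t$-monomial correction (unlike the $\mathrm{SL}(2,\mathbb{Z})$-action of property (b), which may), and that the shuffle-algebra isomorphism used in Theorem~\ref{thm: negut} is the same one used in Theorem~\ref{thm: psi eval shuffle}. Since the entire section adheres to the normalizations of \cite{BHMPS,BHMPS2}, no such corrections appear and the lemma follows directly.
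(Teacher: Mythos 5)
Your proposal is correct and follows essentially the same route as the paper: compute $\phi_{k,K}$ from Theorem~\ref{thm: negut} with $S_i = K/k = n-k+1$, use $\psi(E_{k,K}) = E_{K,k}$, and apply Theorem~\ref{thm: psi eval shuffle}. Your extra justification of the identity $\psi(E_{k,K})=E_{K,k}$ via the power-sum expansion is a welcome elaboration of a step the paper simply asserts.
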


\begin{proof}
We have $\psi(E_{K,k})=E_{k,K}$. By Theorem \ref{thm: negut}
the operator $E_{k,K}\in \cE_{q,t}$ corresponds to the   element
$$
\phi_{k,K}=\frac{x_1^{S_1}\cdots x_k^{S_k}}{\prod (1-qt x_i/x_{i+1})}\in \cS
$$
where 
$$
S_i=\left\lceil \frac{iK}{k}\right\rceil-\left\lceil \frac{(i-1)K}{k}\right\rceil=(n-k+1)=\frac{K}{k}.
$$
Now by  Theorem \ref{thm: psi eval shuffle} we get
$$
\pi_k(\omega E_{K,k}(1))=\pi_k(\omega \psi(\phi_{k,K})(1)))=H_{q,t}^{k}(\phi_{k,K})_{\pol}
$$
and the result follows.
\end{proof}

\subsection{From Shuffle conjecture to Delta conjecture}

We now connect the two formulas with a skewing operator.  First note the following lemma for the rectangular Schur function in $k$ variables.

\begin{lem}
We have
$$
s_{(n-k)^{k-1}}(x_1,\ldots,x_k)=\sum_{\stackrel{\mu_i\le n-k}{|\mu|=(n-k)(k-1)}}x_1^{\mu_1}\cdots x_k^{\mu_k}.
$$
\end{lem}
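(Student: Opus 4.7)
The plan is to use the combinatorial definition of $s_\lambda(x_1,\ldots,x_k)$ as a sum $\sum_T x^T$ over semistandard Young tableaux (SSYTs) of shape $\lambda$ with entries in $\{1,\ldots,k\}$, specialized to $\lambda=(n-k)^{k-1}$. Since $\lambda$ has exactly $k-1$ rows and entries come from a $k$-letter alphabet, each column is a length-$(k-1)$ strictly increasing sequence from $\{1,\ldots,k\}$, and hence is uniquely determined by the single letter $m_i\in\{1,\ldots,k\}$ it omits. Thus a filling of $\lambda$ corresponds to a sequence $(m_1,\ldots,m_{n-k})$ with $m_i\in\{1,\ldots,k\}$.

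The first substantive step is to translate the row-weakly-increasing condition into a condition on $(m_1,\ldots,m_{n-k})$. The entry of column $i$ in row $r$ (indexing rows from the bottom, $1\le r\le k-1$) equals $r$ if $r<m_i$ and $r+1$ if $r\ge m_i$. Comparing columns $i$ and $i+1$ row by row, a four-case analysis on the signs of $r-m_i$ and $r-m_{i+1}$ shows the only obstruction to weakly increasing rows is $m_i\le r<m_{i+1}$, which holds for some $r\in\{1,\ldots,k-1\}$ exactly when $m_i<m_{i+1}$. Hence $T$ is a valid SSYT iff $m_1\ge m_2\ge\cdots\ge m_{n-k}$.

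The second step is to compute the weight. Column $i$ contains each letter of $\{1,\ldots,k\}\setminus\{m_i\}$ exactly once, contributing $(x_1\cdots x_k)/x_{m_i}$ to $x^T$, so the total weight is $(x_1\cdots x_k)^{n-k}/\prod_{i=1}^{n-k} x_{m_i}$. Setting $\mu_j := (n-k)-\#\{i:m_i=j\}$ for $j=1,\ldots,k$, this becomes $\prod_j x_j^{\mu_j}$, with $0\le\mu_j\le n-k$ and $|\mu|=k(n-k)-(n-k)=(n-k)(k-1)$.

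Finally, the assignment $(m_i)\mapsto(\mu_j)$ is a bijection between weakly decreasing sequences of length $n-k$ in $\{1,\ldots,k\}$ and tuples $\mu$ satisfying the constraints on the right-hand side: the inverse sends $\mu$ to the unique weakly decreasing sequence with multiplicity vector $(n-k-\mu_j)_{j=1}^{k}$, which has nonnegative entries summing to $n-k$ by the constraint on $|\mu|$. Summing $x^T$ over all SSYTs then yields the identity. The main obstacle, though minor, is the case analysis in the row condition; once that is in hand the rest is bookkeeping. (An alternative, shorter route would be to apply the duality $s_{(n-k)^{k-1}}(x_1,\ldots,x_k)=(x_1\cdots x_k)^{n-k}\,h_{n-k}(x_1^{-1},\ldots,x_k^{-1})$ coming from the isomorphism $V_{(n-k)^{k-1}}^{*}\otimes\det^{n-k}\cong V_{(n-k)}$ of $\GL_k$-representations noted in Section~\ref{sec: symmetricfunctions}, and then substitute $\mu_i=n-k-\alpha_i$ in the monomial expansion of $h_{n-k}(x^{-1})$.)
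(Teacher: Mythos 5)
Your proof is correct, and it takes a more self-contained route than the paper's. The paper expands $s_{(n-k)^{k-1}}$ in the monomial basis as $\sum_\mu K_{(n-k)^{k-1},\mu}m_\mu$, discards the terms with $\ell(\mu)>k$, and then cites an external result (\cite[Lemma 3.5]{GG}) for the fact that the remaining Kostka numbers are $1$ when all $\mu_i\le n-k$ and $0$ otherwise. You instead prove that Kostka fact directly: working from the tableau definition of $s_\lambda(x_1,\ldots,x_k)$, you encode each height-$(k-1)$ column by the single letter $m_i$ it omits, check that the row condition is equivalent to $m_1\ge\cdots\ge m_{n-k}$, and read off the weight as $(x_1\cdots x_k)^{n-k}/\prod_i x_{m_i}$; the multiplicity vector of $(m_i)$ then biject with the exponent tuples $\mu$ on the right-hand side. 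The case analysis is sound (the obstruction $m_i\le r<m_{i+1}$ has a solution $r\in\{1,\ldots,k-1\}$ exactly when $m_i<m_{i+1}$), and the bookkeeping on $\mu_j=(n-k)-\#\{i:m_i=j\}$ checks out. What your approach buys is independence from the citation; what the paper's buys is brevity. One caution on your parenthetical alternative: deriving the lemma from $s_{(n-k)^{k-1}}(x)=(x_1\cdots x_k)^{n-k}h_{n-k}(x^{-1})$ would invert the paper's logical order, since that duality is exactly Corollary \ref{cor: dual rectangle}, which the paper deduces \emph{from} this lemma — so as written in the paper that route would be circular, though the $\GL_k$ identity $V_{(n-k)^{k-1}}^*\otimes\det^{n-k}\cong V_{(n-k)}$ can of course be established independently.
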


\begin{proof}
We have
$$
s_{(n-k)^{k-1}}=\sum_{\mu}K_{(n-k)^{k-1},\mu}m_{\mu},
$$
where $K_{(n-k)^{k-1},\mu}$ is the Kostka number computing the number of column-strict tableaux of shape $(n-k)^{k-1}$ and content $\mu$. If $\ell(\mu)>k$, then $m_{\mu}(x_1,\ldots,x_k)$ vanishes and we can ignore all such terms. If $\ell(\mu)\le k$, then by \cite[Lemma 3.5]{GG} we have 
$$
K_{(n-k)^{k-1},\mu}=\begin{cases}
1 & \text{if\ all}\ \mu_i\le n-k\\
0 & \text{otherwise}
\end{cases}
$$
which gives the resulting formula.
\end{proof}

\begin{cor}
\label{cor: dual rectangle}
We have
$$
s_{(n-k)^{k-1}}(x_1^{-1},\ldots,x_k^{-1})=\frac{h_{n-k}(x_1,\ldots,x_k)}{x_1^{n-k}\cdots x_k^{n-k}}.
$$
\end{cor}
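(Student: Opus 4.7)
The plan is to substitute $x_i \mapsto x_i^{-1}$ in the formula established in the preceding lemma, and then reindex the resulting sum by complementation in a rectangle. Explicitly, by the lemma,
$$s_{(n-k)^{k-1}}(x_1^{-1},\ldots,x_k^{-1}) = \sum_{\substack{\mu_i \le n-k \\ |\mu|=(n-k)(k-1)}} x_1^{-\mu_1}\cdots x_k^{-\mu_k},$$
where $\mu$ runs over weak compositions of $(n-k)(k-1)$ into $k$ parts, each bounded by $n-k$. I would then introduce $\nu_i := (n-k) - \mu_i$. Under this complementation the constraints $0 \le \mu_i \le n-k$ become $0 \le \nu_i \le n-k$, and the size condition becomes $|\nu| = k(n-k) - (n-k)(k-1) = n-k$; since $|\nu| = n-k$ and $\nu_i \ge 0$, the upper bound $\nu_i \le n-k$ is automatic, so $\nu$ ranges over \emph{all} weak compositions of $n-k$ into $k$ parts.

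Each summand then transforms as
$$x_1^{-\mu_1}\cdots x_k^{-\mu_k} = \frac{x_1^{\nu_1}\cdots x_k^{\nu_k}}{(x_1 \cdots x_k)^{n-k}},$$
so pulling out the common denominator and summing over all such $\nu$ recognizes the numerator as $h_{n-k}(x_1,\ldots,x_k)$, which proves the claim. There is no real obstacle here: the only point to verify is that complementation inside the $k \times (n-k)$ box gives the claimed bijection between index sets, which is immediate. As a sanity check, one can derive the same identity from the general $\GL_k$ duality formula $s_\lambda(x_1^{-1},\ldots,x_k^{-1}) = (x_1 \cdots x_k)^{-\lambda_1} s_{\lambda^c}(x_1,\ldots,x_k)$ applied to $\lambda = (n-k)^{k-1}$ padded to length $k$, since $\lambda_1 = n-k$ and the complement in the $k \times (n-k)$ box is $\lambda^c = (n-k)$, yielding $s_{(n-k)} = h_{n-k}$.
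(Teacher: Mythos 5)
Your argument is correct and is essentially the same as the paper's proof: both substitute $x_i\mapsto x_i^{-1}$ into the monomial expansion of $s_{(n-k)^{k-1}}$ from the preceding lemma and complement each exponent vector in the $k\times(n-k)$ box to recognize the numerator as $h_{n-k}$. The extra sanity check via the $\GL_k$ duality formula is a nice touch but not needed.
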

\begin{proof}
Given a monomial $x_1^{\mu_1}\cdots x_k^{\mu_k}$ with $0\le \mu_i\le n-k$ and $\sum \mu_i=(n-k)(k-1)$, we write $\alpha_i=n-k-\mu_i$. Note that $0\le \alpha_i\le n-k$ and $\sum \alpha_i=(n-k)k-(n-k)(k-1)=n-k$, so that 
$$
s_{(n-k)^{k-1}}(x_1^{-1},\ldots,x_k^{-1})=\sum_{\mu}x_1^{-\mu_1}\cdots x_k^{-\mu_k}=\frac{\sum_{\alpha}x_1^{\alpha_1}\cdots x_k^{\alpha_k}}{x_1^{n-k}\cdots x_k^{n-k}}=\frac{h_{n-k}(x_1,\ldots,x_k)}{x_1^{n-k}\cdots x_k^{n-k}}
$$ as desired.
\end{proof}

\begin{lem}
\label{lem: perp Hqt}
Let $\varphi(x_1,\ldots,x_k)$ be an arbitrary Laurent polynomial. Then we have the identity:
$$
s_{(n-k)^{k-1}}^{\perp,k}H_{q,t}^k(\varphi(x))_{\pol}=H_{q,t}^{k}\left(\frac{\varphi(x)h_{n-k}(x_1,\ldots,x_k)}{x_1^{n-k}\cdots x_k^{n-k}}\right)_{\pol}.
$$
\end{lem}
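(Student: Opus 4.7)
The plan is to test both sides against an arbitrary $a\in\Lambda_k$ with respect to the Hall pairing $\langle\cdot,\cdot\rangle_k$ and reduce both pairings to the same constant-term expression. Since the Schur polynomials with at most $k$ parts form an orthonormal basis of $\Lambda_k$, non-degeneracy of the pairing then yields the desired equality.

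Starting from the left-hand side, I would use the defining adjointness of $s_{(n-k)^{k-1}}^{\perp,k}$ to write
\[
\langle a,\ s_{(n-k)^{k-1}}^{\perp,k} H_{q,t}^k(\varphi)_{\pol}\rangle_k
=\langle s_{(n-k)^{k-1}}\, a,\ H_{q,t}^k(\varphi)_{\pol}\rangle_k,
\]
and then invoke Proposition \ref{prop: Hall constant term}(c) with $f=s_{(n-k)^{k-1}}\,a\in\Lambda_k$ and $g=H_{q,t}^k(\varphi)$ (lying in the relevant completion of $\Lambda_k^{\pm}$) to rewrite this as
\[
\langle x^0\rangle\, s_{(n-k)^{k-1}}(x_1^{-1},\ldots,x_k^{-1})\, a(x_1^{-1},\ldots,x_k^{-1})\, H_{q,t}^k(\varphi)(x_1,\ldots,x_k).
\]

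For the right-hand side, I would again apply Proposition \ref{prop: Hall constant term}(c), this time with $f=a$ and $g=H_{q,t}^k\bigl(\varphi\, h_{n-k}/(x_1\cdots x_k)^{n-k}\bigr)$, to obtain $\langle x^0\rangle\,a(x^{-1})\,H_{q,t}^k(\varphi\, h_{n-k}/(x_1\cdots x_k)^{n-k})(x)$. By Corollary \ref{cor: dual rectangle}, the scalar factor $h_{n-k}(x)/(x_1\cdots x_k)^{n-k}$ coincides with $s_{(n-k)^{k-1}}(x^{-1})$, which is a symmetric Laurent polynomial in $x_1,\ldots,x_k$. Proposition \ref{prop: hqt symmetric} then pulls this symmetric factor through $H_{q,t}^k$, yielding
\[
H_{q,t}^k\bigl(\varphi\cdot s_{(n-k)^{k-1}}(x^{-1})\bigr)=s_{(n-k)^{k-1}}(x^{-1})\, H_{q,t}^k(\varphi),
\]
so the right-hand side pairing also equals the same constant-term expression displayed above. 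Equality of both pairings for all $a\in\Lambda_k$, combined with non-degeneracy, finishes the argument.

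I expect the only mildly delicate point to be ensuring that Propositions \ref{prop: Hall constant term}(c) and \ref{prop: hqt symmetric} remain valid in the completion of $\Lambda_k^{\pm}$ in which $H_{q,t}^k(\varphi)$ actually lives; this is routine, since both identities are $\mathbb{Q}(q,t)$-linear and can be verified coefficient-by-coefficient in the $q,t$-expansion. It is worth emphasizing why the Hall-pairing route is preferable to the seemingly more direct approach of writing $s_{(n-k)^{k-1}}^{\perp,k}F=[s_{(n-k)^{k-1}}(x^{-1})F]_{\pol}$ and then trying to commute the polynomial projection with multiplication: the non-polynomial part of $H_{q,t}^k(\varphi)$ is a sum of terms $(x_1\cdots x_k)^{-s}s_\mu$ with $\ell(\mu)<k$, and multiplying by $s_{(n-k)^{k-1}}(x^{-1})=h_{n-k}/(x_1\cdots x_k)^{n-k}$ can in principle produce Schur polynomials with $k$ parts (via the Pieri rule applied to $h_{n-k}s_\mu$), for which the polynomial projection is not obviously zero. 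The Hall-pairing reformulation sidesteps this issue entirely by collapsing everything to a single constant-term identity.
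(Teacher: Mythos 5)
Your proposal is correct and is essentially the paper's own argument: the paper likewise pairs both sides with Schur polynomials $s_\lambda$ (equivalent to your arbitrary $a\in\Lambda_k$), converts each pairing to a constant term via Proposition \ref{prop: Hall constant term}(c), pulls the symmetric factor $h_{n-k}/(x_1\cdots x_k)^{n-k}$ through $H^k_{q,t}$ using Proposition \ref{prop: hqt symmetric}, and identifies it with $s_{(n-k)^{k-1}}(x^{-1})$ via Corollary \ref{cor: dual rectangle}. Your closing remark about why the na\"{\i}ve commutation of $[\cdot]_{\pol}$ with multiplication is problematic is a sensible observation but does not change the substance.
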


\begin{proof}
We pair both sides with $s_{\lambda}$ using the Hall inner product on $\Lambda_k$.  On the left-hand side we have
\begin{align*}
\langle s_{\lambda},s_{(n-k)^{k-1}}^{\perp}H_{q,t}^k(\varphi(x))_{\pol}\rangle_k &=\langle s_{\lambda}s_{(n-k)^{k-1}},H_{q,t}^k(\varphi(x))_{\pol}\rangle_k \\
&= \langle x^0\rangle s_{\lambda}(x_1^{-1},\ldots,x_k^{-1})s_{(n-k)^{k-1}}(x_1^{-1},\ldots,x_k^{-1})H_{q,t}^k(\varphi(x)).
\end{align*}
The last equation follows from Proposition \ref{prop: Hall constant term}(c).
In the right-hand side by Propositions \ref{prop: Hall constant term}(c) and \ref{prop: hqt symmetric} we have  
$$
\left\langle s_{\lambda},H_{q,t}^{k}\left(\frac{\varphi(x)h_{n-k}(x_1,\ldots,x_k)}{x_1^{n-k}\cdots x_k^{n-k}}\right)_{\pol}\right\rangle_k=
\langle x^0\rangle s_{\lambda}(x_1^{-1},\ldots,x_k^{-1})H_{q,t}^{k}\left(\frac{\varphi(x)h_{n-k}(x_1,\ldots,x_k)}{x_1^{n-k}\cdots x_k^{n-k}}\right)=
$$
$$
\langle x^0\rangle s_{\lambda}(x_1^{-1},\ldots,x_k^{-1})\frac{h_{n-k}(x_1,\ldots,x_k)}{x_1^{n-k}\cdots x_k^{n-k}}H_{q,t}^{k}(\varphi(x)).
$$
Now the statement follows from Corollary \ref{cor: dual rectangle}.
\end{proof}

We conclude our main skewing formula as follows.

\begin{mainthm}
We have
$$
s_{(k-1)^{n-k}}^{\perp}E_{K,k}
(1)=\Delta'_{k-1}(e_n).
$$
\end{mainthm}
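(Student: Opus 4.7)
The plan is to apply the standard involution $\omega$ to both sides, restrict to $k$ variables via $\pi_k$, and use the shuffle-algebra expressions in Lemma \ref{lem: E shuffle} and Corollary \ref{cor: delta shuffle} together with Lemma \ref{lem: perp Hqt} to make the two sides manifestly equal in $\Lambda_k(q,t)$; the last step is to lift back to $\Lambda(q,t)$ using a Schur-support bound. Since $\omega$ is a Hall isometry with $\omega s_\mu = s_{\mu'}$, it intertwines $s_\mu^{\perp}$ with $s_{\mu'}^{\perp}$, and the conjugate of the rectangle $(k-1)^{n-k}$ is $(n-k)^{k-1}$, so applying $\omega$ to the left-hand side gives $s_{(n-k)^{k-1}}^{\perp}\omega E_{K,k}(1)$.

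The Rational Shuffle Theorem writes $E_{K,k}(1)$ as a sum of monomials $x^\pi$ in which each variable appears at most $k$ times, since a $(K,k)$ word parking function has $k$ columns and is strictly increasing within each column. Hence $E_{K,k}(1)$ has Schur support on partitions with $\lambda_1\le k$, equivalently $\omega E_{K,k}(1)\in\Span\{s_\nu:\ell(\nu)\le k\}$. Lemma \ref{lem: perp projection} then keeps $s_{(n-k)^{k-1}}^{\perp}\omega E_{K,k}(1)$ inside this span and commutes the perp with the restriction:
$$
\pi_k\bigl(s_{(n-k)^{k-1}}^{\perp}\omega E_{K,k}(1)\bigr)=s_{(n-k)^{k-1}}^{\perp,k}\,\pi_k\bigl(\omega E_{K,k}(1)\bigr).
$$

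Substituting Lemma \ref{lem: E shuffle} for $\pi_k(\omega E_{K,k}(1))$ and applying Lemma \ref{lem: perp Hqt} with
$$
\varphi(x)=\frac{x_1^{n-k+1}\cdots x_k^{n-k+1}}{\prod_i(1-qt\,x_i/x_{i+1})},
$$
the key algebraic simplification is that the factor $(x_1\cdots x_k)^{n-k+1}$ inside $\varphi$ multiplied by $h_{n-k}(x_1,\ldots,x_k)/(x_1\cdots x_k)^{n-k}$ collapses to $(x_1\cdots x_k)\,h_{n-k}(x_1,\ldots,x_k)$. Thus
$$
s_{(n-k)^{k-1}}^{\perp,k}H_{q,t}^{k}(\varphi)_{\pol}=H_{q,t}^{k}\!\left(\frac{(x_1\cdots x_k)\,h_{n-k}(x_1,\ldots,x_k)}{\prod_i(1-qt\,x_i/x_{i+1})}\right)_{\!\pol},
$$
and by Corollary \ref{cor: delta shuffle} this equals $\pi_k(\omega\Delta'_{e_{k-1}}e_n)$. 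Consequently $\pi_k(\omega s_{(k-1)^{n-k}}^{\perp}E_{K,k}(1))=\pi_k(\omega\Delta'_{e_{k-1}}e_n)$ in $\Lambda_k(q,t)$.

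To promote this equality to $\Lambda(q,t)$, I would verify that $\omega\Delta'_{e_{k-1}}e_n$ also lies in $\Span\{s_\nu:\ell(\nu)\le k\}$, i.e.\ that $\Delta'_{e_{k-1}}e_n$ is Schur-supported on partitions with $\lambda_1\le k$. This follows from the Rise Delta Theorem by the same monomial count: stacked parking functions inherit the $k$-column strict-increase structure from the reduction map out of $(K,k)$ parking functions, so each variable occurs at most $k$ times in any $x^P$. Since $\pi_k$ is injective on $\Span\{s_\nu:\ell(\nu)\le k\}$, the equality lifts, and applying $\omega$ once more yields the theorem. The only non-routine ingredient is this Schur-support bound for $\Delta'_{e_{k-1}}e_n$; everything else is an assembly of the previously established lemmas.
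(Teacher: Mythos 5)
Your proof is correct and follows essentially the same route as the paper: apply $\omega$ (which conjugates the rectangle), restrict to $k$ variables, combine Lemma \ref{lem: E shuffle}, Lemma \ref{lem: perp Hqt} and Corollary \ref{cor: delta shuffle} to match the two $H_{q,t}^k$ expressions, and lift back to $\Lambda(q,t)$ using the length-$\le k$ Schur-support bounds together with Lemma \ref{lem: perp projection} and the injectivity of $\pi_k$ on that span. The only deviation is that you justify the support bound for $\Delta'_{e_{k-1}}e_n$ via the Rise Delta Theorem and a monomial-exponent argument, whereas the paper cites \cite{BHMPS} (Remark 4.4.2); this is logically valid since that theorem is established, but it makes this particular proof of the skewing identity depend on the Rise Delta Theorem, so it could not then be fed into Corollary \ref{cor: commutative-diagram} to re-derive that theorem without circularity.
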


\begin{proof}
First, let us prove that the restrictions of both sides to $k$ variables agree.

Consider the function $\varphi(x_1,\ldots,x_k)=\frac{x_1^{n-k+1}\cdots x_k^{n-k+1}}{\prod (1-qt x_i/x_{i+1})}$. By Lemmas  \ref{lem: E shuffle} and\ref{lem: perp Hqt}   we get
$$
s_{(n-k)^{k-1}}^{\perp,k}\pi_k(\omega E_{K,k}(1))=
s_{(n-k)^{k-1}}^{\perp,k}H_{q,t}^{k}(\varphi)_{\pol}=H_{q,t}^{k}\left(\frac{\varphi(x)h_{n-k}(x_1,\ldots,x_k)}{x_1^{n-k}\cdots x_k^{n-k}}\right)_{\pol}=
$$
$$
H_{q,t}^{k}\left(\frac{x_1\cdots x_k h_{n-k}(x_1,\ldots,x_k)}{\prod (1-qt x_i/x_{i+1)})}\right)_{\pol}
$$
which agrees with $\pi_k(\omega \Delta'_{e_{k-1}}e_n)$ by Corollary \ref{cor: delta shuffle}.

Next, we need to argue that all Schur functions appearing in the both sides of the equations have at most $k$ parts, so that we do not lose any information when restricting to $k$ variables. For the right hand side, it follows from \cite[Remark 4.4.2]{BHMPS}. 

For the left hand side, by e.g. \cite[Corollary 3.7.2]{BHMPS2}  (or by Rational Shuffle Theorem) we have that all Schur functions appearing in $\omega E_{K,k}(1)$ have at most $k$ parts. Now by Lemma \ref{lem: perp projection} we get
$$
\pi_k(s_{(n-k)^{k-1}}^{\perp}\omega E_{K,k}(1))=s_{(n-k)^{k-1}}^{\perp,k}\pi_k(\omega E_{K,k}(1))=
\pi_k(\omega \Delta'_{e_{k-1}}e_n)
$$
and by the above
$$
s_{(n-k)^{k-1}}^{\perp}\omega E_{K,k}(1)=\omega \Delta'_{e_{k-1}}e_n.
$$
Applying $\omega$ to both sides (which transposes the partition for the skewing operator) we obtain the result.
 \end{proof}

\begin{example}
    For $(n,k) = (3,2)$, then $K=4$ and we have
    \[
    E_{4,2}(1) = s_{(2,2)} + (q+t)s_{(2,1,1)} + (q^2+qt+t^2)s_{(1,1,1,1)}
    \]
    and
    \[
    \Delta'_{e_{1}}e_3 =  (1+q+t)s_{(2, 1)}+ (q+t+q^2+qt+t^2)s_{(1, 1, 1)}.
    \]
    We apply $s_{(1)}^\perp$ to the former to obtain the latter.
\end{example}

\section{Combinatorial proof of the skewing formula}\label{sec: combinatorics}
In this section, we give a combinatorial proof of Theorem~\ref{thm: perping big main}.  Throughout this section, we say a parking function of any type is \textbf{standard} if its labels are $1,2,3,\ldots,m$ for some $m$, each occuring exactly once.  A \textbf{word parking function} is a generalization of a standard parking function in which labels may occur with higher multiplicity, but columns still must be strictly increasing.

\subsection{Stacks}
\label{sec: stacks}

We use the notation of \cite{HRW} here, and recall the definition of a \textit{stacked} parking function that can be used to reformulate the Rise version of the Delta 
Conjecture.  

\begin{defn}
A \textbf{stack} $S$ of boxes in an $n\times k$ grid is a subset of the grid boxes such that there is one element of $S$ in each row, at least one in each column, and each box in $S$ is weakly to the right of the one below it.

A (word) \textbf{stacked parking function} with respect to $S$ is a labeled up-right path $D$ such that each box of $S$ lies below $D$, and the labeling is strictly increasing up each column.
\end{defn}

We write $\LD(S)$ is the set of stacked parking functions with respect to $S$, and
\[
\LD_{n,k}^{\stack} \coloneqq \bigcup_{S\in \Stack_{n,k}} \LD(S).
\]

\begin{figure}
    \centering
    \includegraphics{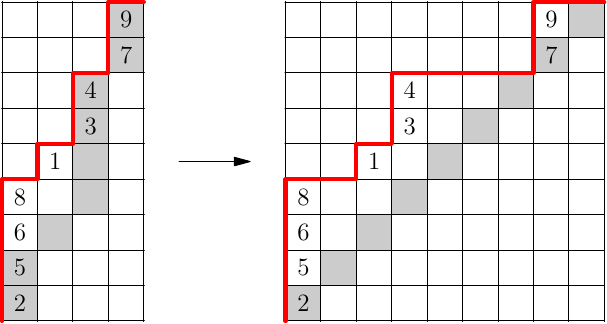}
    \caption{At left, a stacked parking function $P$ for $k=4$, $n=9$.  At right, the corresponding ordinary parking function $P'$ of size $n$.}
    \label{fig:stack}
\end{figure}

\begin{defn}
     The \textbf{area} of an element of $\LD(S)$ is the number of boxes between the path and the stack $S$.   
\end{defn}

In Figure \ref{fig:stack}, the area of the parking function is $4$.

\begin{defn}
\label{def: dinv for stacks}
    The \textbf{hdinv} statistic on $\LD$ is defined as follows.  Given $P\in \LD^{\stack}_{n,k}$, consider the stack heights $w_1,w_2,\ldots,w_k$ of each column of $P$.  Insert $w_i-1$ empty columns between column $i$ and $i+1$ for each $i$ from right to left, and connect the new gaps in the Dyck path with horizontal lines, as in Figure \ref{fig:stack}.  This forms an ordinary parking function $P'$ on a square grid, and $$\hdinv(P)\coloneqq\dinv(P'),$$ where $\dinv(P')$ is the number of pairs of labeled boxes $(a,b)$ with labels $(\alpha,\beta)$ with $\alpha\le \beta$ and either:
\begin{itemize}
    \item $a,b$ in the same diagonal with $b$ to the right,
    \item $a$ in one diagonal lower and to the right.
\end{itemize}
\end{defn}
\begin{rmk}
    The map $P\to P'$ described above is the map $\phi^{-1}_{n,n-k}$ map from \cite[page 9]{HRW}.
\end{rmk}

For example, the stacked parking function $P$ in Figure \ref{fig:stack} has $\hdinv(P)=5$, because the ordinary parking function at right has five diagonal inversions: $(2,7),(5,9),(1,3),(1,8),(3,8)$.

\subsection{Reducing to stacks}\label{sec:reduce}
Given a \textit{standard} parking function in the $K\times k$ rectangle, that is, using the labels $1,2,\ldots,K$ each exactly once, we call a label $a$ {\bf big} if $a>n$, and {\bf small} if $a\le n$. Then there are $n$ small labels and $K-n$ big labels in total, and we write $b_i$ for the number of big labels in column $i$.  We say it is \textbf{admissible} if $b_i\le n-k$ for all $i$, and we note there is a natural map $F$ from admissible standard $(K,k)$ parking functions to the set $\LD_{n,k}^\stack$ as follows:

\begin{itemize}
\item The parking function $F(\pi)$ is obtained by erasing all big labels in $\pi$ and deleting all vertical steps of the Dyck path to the left of these labels.
\item The heights of the stacks are given by $w_i=n-k+1-b_i$.
\end{itemize}

See Figure \ref{fig:map-F} from the introduction for an example.

We need to generalize this map to \textit{word} parking functions and stacks.  To do so, we define two new sets of combinatorial objects.

\begin{figure}
    \centering
    \includegraphics{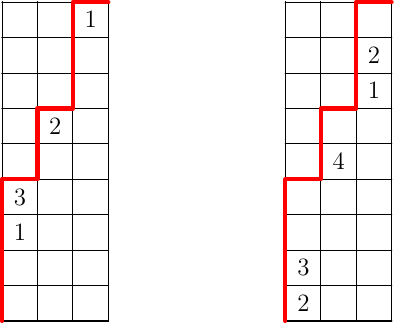}
    \caption{At left, an element of $\WPF(D,\underline{b})$ for $K=9,k=3$ where $\underline{b}=(2,1,1)$.  At right, an element of $\WPF(D,\underline{s})$ where $\underline{s}=(2,1,2)$ is the corresponding composition of small labels.  Note that $\underline{b}$ is admissible because $b_i\leq n-k=2$ for all $i$, and $\sum b_i=4=9-5=K-n$.}
    \label{fig:big-and-small}
\end{figure}

\begin{defn}
    Let $D$ be a $(K,k)$ Dyck path, and let $\underline{b}=(b_1,\ldots,b_k)$ be a sequence of numbers such that $b_i$ is less than or equal to the number of vertical steps just left of column $i$ in $D$ for each $i$, and $\sum b_i=K-n$.  Then we define $\WPF(D,\underline{b})$ to be the set of all column-strict labelings of the top $b_i$ boxes in the $i$th column under $D$ for each $i$. 

    Similarly, if $\underline{s}=(s_1,\ldots,s_k)$ is a sequence of numbers with the same column by column restriction and $\sum s_i=n$, then we define $\WPF(D,\underline{s})$ to be the set of all column-strict labelings of the bottom $s_i$ boxes to the right of each vertical run in $D$. 

    We say $\underline{b}$ is \textbf{admissible} if $b_i\le n-k$ for all $i$, and we say $\underline{s}$ is admissible if the corresponding big sequence - formed by the complements of $s_i$ relative to the heights of the vertical runs of $D$ - is admissible.  (See Figure \ref{fig:big-and-small})
\end{defn}

We generalize the map $F$ to this setting as follows.

\begin{defn}
    We define $$F:\bigcup_{D}\bigcup_{\underline{s}\text{ admissible}} \WPF(D,\underline{s})\to \LD_{n,k}^\stack$$ by 
    \begin{itemize}
        \item The parking function $F(\pi)$ is obtained by deleting all vertical steps of the Dyck path above the small labeled letters.
        \item The heights of the stacks are given by $w_i=n-k+1-b_i$, where $\underline{b}$ is the big sequence corresponding to $\underline{s}$.
    \end{itemize}
\end{defn}

\begin{lem}
\label{lem: admissible to stacks}
The map $F$ is a well-defined bijection.
\end{lem}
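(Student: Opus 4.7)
The plan is to verify well-definedness of $F$ and then exhibit an explicit inverse $G$; bijectivity is immediate from this.

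For well-definedness, fix $\pi \in \WPF(D,\underline{s})$ with corresponding big-sequence $\underline{b}=(b_1,\ldots,b_k)$. Deleting the top $b_i$ vertical steps from column $i$ of $D$ produces a lattice path from $(0,0)$ to $(k,n)$ since $\sum_i b_i = K-n$. Admissibility $b_i \leq n-k$ gives the stack heights $w_i \coloneqq n-k+1-b_i \geq 1$, and $\sum_i w_i = k(n-k+1)-(K-n)=n$. A short argument shows that a tuple $(w_i)$ with $w_i\geq 1$ and $\sum w_i = n$ determines a unique stack: column $1$ must contain the bottom $w_1$ rows (else the weakly-rightward condition fails), and inductively column $i$ occupies rows $w_1+\cdots+w_{i-1}+1,\ldots,w_1+\cdots+w_i$. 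The nontrivial verification is that this stack lies below the new path, which amounts to $\sum_{j\leq i} w_j \leq \sum_{j\leq i} s_j$ for all $i$. Substituting $w_j=n-k+1-b_j$ rearranges this inequality to $\sum_{j\leq i}(s_j+b_j)\geq (n-k+1)i$, which is precisely the $(K,k)$-Dyck condition on $D$ (since $D$ stays weakly above $y = (n-k+1)x = Kx/k$). Column-strictness of the labeling is preserved because we only erase unlabeled boxes.

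For the inverse $G$, given $P \in \LD_{n,k}^{\stack}$ with stack column-heights $(w_i)$ and path column-heights $(s_i)$, set $b_i \coloneqq n-k+1-w_i$ and insert $b_i$ vertical steps at the top of column $i$'s vertical run, keeping the labels intact. Admissibility $b_i \leq n-k$ follows from $w_i \geq 1$, and the identical cumulative-sum rearrangement shows that the new path $D'$ satisfies the $(K,k)$-Dyck condition if and only if the stack of $P$ lies below its path, so $G(P) \in \WPF(D',\underline{s})$. That $F\circ G$ and $G\circ F$ are identities is transparent: the two maps perform inverse operations on the path (inserting vs.\ deleting the top $b_i$ vertical steps in each column), exchange stack heights and big-sequence entries via the involution $w_i = n-k+1-b_i$, and leave the labeling untouched throughout.

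The main (mild) obstacle is the equivalence between the $(K,k)$-Dyck path condition on $D$ and the stack-below-path condition on $F(\pi)$, but as sketched above this is just a cumulative-sum rearrangement; everything else is direct bookkeeping.
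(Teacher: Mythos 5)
Your proposal is correct and follows essentially the same route as the paper: both verify $\sum w_i = n$ and reduce the stack-below-path condition to the cumulative inequality $\sum_{j\le i}(s_j+b_j)\ge i(n-k+1)$, i.e.\ the $(K,k)$-Dyck condition, and both establish bijectivity by writing down the explicit inverse $b_i = n-k+1-w_i$. Your added observation that the heights $(w_i)$ determine the stack uniquely is a useful detail the paper leaves implicit, but the argument is the same.
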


\begin{proof}
Since $\pi$ is admissible, we have $b_i\le n-k$ and $w_i>0$. We check that the stacks add up to $n$. Using the above definition, which says that $w_i=n-k+1-b_i$,
$$
w_1+\cdots+w_k=k(n-k+1)-\sum b_i=K-\sum b_i=n.$$
Next, we need to check that the Dyck path is above the stack. This is equivalent to
\begin{equation}\label{eq:aboveinequality}
s_1+\cdots+s_i\ge w_1+\cdots+w_i=i(n-k+1)-(b_1+\cdots+b_i).
\end{equation}
Indeed, $(b_j+s_j)_j$ are the lengths of the vertical runs of the original Dyck path, so we have
$$
(b_1+s_1)+\cdots+(b_i+s_i)\ge i(n-k+1),
$$ 
which is equivalent to \eqref{eq:aboveinequality}. Hence $F$ is well-defined.

To show it is bijective, we show we can reverse it.  Consider a stacked parking function $S$ in $\LD_{n,k}^\stack$.  We can recover the unique parking function $\pi$ such that $F(\pi)=S$ as follows.  The sequence $\underline{s}$ is simply given by the heights of the vertical runs of the Dyck path of $S$.  The sequence $\underline{b}$ is given by the heights of the stack and the formula $b_i=n-k+1-w_i$.  The numbers $s_i+b_i$ determine the heights of the vertical runs of the Dyck path of $\pi$, recovering $D$, and this lies above the diagonal by \eqref{eq:aboveinequality}.  Finally, the labeling of $\pi$ is precisely the labeling of $S$, placed on the bottommost $s_i$ letters of each vertical run in $\pi$.  This completes the proof.
\end{proof}

\begin{lem}
\label{lem: area match}
We have $\area(\pi)=\area(F(\pi))$, where the two area statistics are the appropriate ones for each object.
\end{lem}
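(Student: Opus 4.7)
\smallskip

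The plan is a direct column-by-column count of the area on both sides, using the identity $w_i = n-k+1-b_i$ (built into the definition of $F$) to match contributions. I would fix a column index $i \in \{1,\dots,k\}$ and compute the number of area-boxes contributed by that column in $\pi$ and in $F(\pi)$ separately, then show these two counts already agree before summing.

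For $\pi \in \WPF(D,\underline{s})$, the vertical run in column $i$ has height $h_i = s_i + b_i$ and reaches total height $a_{i+1} := \sum_{j \le i}(s_j + b_j)$. Because the diagonal from $(0,0)$ to $(k,K)$ has integer slope $n-k+1$, in column $i$ it passes through exactly the boxes in rows $(n-k+1)(i-1),\dots,(n-k+1)i - 1$ and never through the interior of a higher box. Thus the boxes of column $i$ that lie strictly above the diagonal and below $D$ are those in rows $r$ with $i(n-k+1) \le r < a_{i+1}$, contributing $a_{i+1} - i(n-k+1)$ to $\area(\pi)$; this quantity is nonnegative since $D$ is a rational Dyck path. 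For $F(\pi)$, the stack has column-$i$ height $w_i = n-k+1 - b_i$, and since stacks are weakly right-leaning the column-$i$ stack boxes occupy rows $\sum_{j<i} w_j,\dots,\sum_{j \le i} w_j - 1$. The underlying Dyck path of $F(\pi)$ has vertical run of height $s_i$ in column $i$, reaching height $\sum_{j \le i} s_j$, so the column-$i$ contribution to $\area(F(\pi))$ is $\sum_{j \le i} s_j - \sum_{j \le i} w_j$, again nonnegative by inequality \eqref{eq:aboveinequality}.

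Finally, I would substitute $b_j = n-k+1 - w_j$ into the $\pi$-contribution:
\[
a_{i+1} - i(n-k+1) = \sum_{j \le i}(s_j + b_j) - i(n-k+1) = \sum_{j \le i}(s_j - w_j),
\]
which equals the $F(\pi)$-contribution column by column. Summing over $i = 1,\dots,k$ yields $\area(\pi) = \area(F(\pi))$. There is no real obstacle here: the substitution $b_i = n-k+1 - w_i$ precisely converts the shift against the integer-slope diagonal into the shift against the top of the stack, so the agreement is forced by the construction of $F$. The only bookkeeping to be careful about is verifying that for integer slope $n-k+1$, the diagonal passes through exactly $n-k+1$ boxes per column (in rows $(n-k+1)(i-1),\dots,(n-k+1)i - 1$), which is an elementary check.
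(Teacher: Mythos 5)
Your proposal is correct and follows essentially the same route as the paper: a column-by-column count showing that column $i$ of $\pi$ contributes $\sum_{j\le i}(s_j+b_j)-i(n-k+1)$ while column $i$ of $F(\pi)$ contributes $\sum_{j\le i}s_j-\sum_{j\le i}w_j$, with the substitution $w_j=n-k+1-b_j$ forcing equality. The extra care you take in justifying which boxes the integer-slope diagonal cuts is a fine elementary check but does not change the argument.
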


\begin{proof}
Indeed, the number of boxes in the $i$-th column of $\pi$ which contribute to its area equals 
$$
(b_1+s_1)+\cdots+(b_i+s_i)-i(n-k+1),
$$
while the number of area boxes in the $i$-th column of $F(\pi)$ equals
$$
(s_1+\cdots+s_i)-(w_1+\cdots+w_i)=(s_1+\cdots+s_i)-i(n-k+1)+(b_1+\cdots+b_i)
$$
and the result follows.
\end{proof}

Following Definition \ref{def: dinv for stacks}, we can relate the labeled boxes in $P\in \LD_{n,k}^{\stack}$ and $P'\in \WPF_{n,n}$
as follows.  Given a box $A=(i,x)$ containing a parking function label in $P$, we define
$$
A'=(i+(w_1-1)+\cdots+(w_{i-1}-1),x)=(w_1+\cdots+w_{i-1}+1,x).
$$
where $h_i$ is the number of stacked boxes in the $i$-th column.
Then we move the label from box $A$ into box $A'$ in the $n\times n$ square.

\begin{lem}
\label{lem: tdinv match}
Suppose $A$ and $B$ are two labeled boxes in the $(K,k)$ small parking function $\pi\in \WPF_{D,\underline{s}}$, let $F(A)$ and $F(B)$ be their images under $F$ in $F(\pi)$ and 
$F(A)'$ and $F(B)'$ the corresponding boxes in $F(\pi)'$. 
Then $A$ and $B$ form an attacking pair if and only if $F(A)'$ and $F(B)'$ do.
\end{lem}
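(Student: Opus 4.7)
The plan is to track coordinates of labeled boxes through the composite map $\pi \mapsto F(\pi) \mapsto F(\pi)'$ and reduce the attacking condition to a column comparison plus a comparison of ``diagonal values,'' both of which are preserved by the composition. First I would fix notation for a labeled box $A$ in $\pi$: say $A$ is the $(j+1)$-th small box from the bottom of the vertical run in column $c_A$, so $r_A = 1 + \sum_{c < c_A}(s_c + b_c) + j$ with $0 \le j < s_{c_A}$. The map $F$ excises exactly the big-labeled rows and preserves columns, so $F(A)$ lies in column $c_A$ at row $x_A := 1 + \sum_{c < c_A} s_c + j$. The map $P \mapsto P'$ then sends column $c_A$ to column $c'_A := 1 + \sum_{c < c_A} w_c$ and leaves the row unchanged, where $w_c = n-k+1-b_c$.

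Next I would define the diagonal value in each setting: $d_\pi(c,r) = r - c(n-k+1)$ in the $(K,k)$ grid (whose diagonals have slope $n-k+1$), and $d'(c,r) = r - c$ in the $(n,n)$ square. Substituting the formulas for $c'_A$ and $x_A$, and using $w_c + b_c = n-k+1$, a short direct computation yields
$$
d'(F(A)') - d_\pi(c_A, r_A) = n-k,
$$
independent of $A$. Consequently diagonal \emph{differences} are preserved exactly: for every pair of labeled boxes $A, B$ one has $d'(F(A)') - d'(F(B)') = d_\pi(A) - d_\pi(B)$.

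The column order is also preserved: admissibility gives $w_c \ge 1$, so $c \mapsto 1 + \sum_{c'<c} w_{c'}$ is strictly increasing and therefore $c'_A > c'_B$ iff $c_A > c_B$, with equality iff $c_A = c_B$. The attacking condition in either setting is the disjunction of (i) same diagonal with the left box being $a$, and (ii) diagonal of $a$ one less than that of $b$ with $a$ strictly to the right of $b$. Both (i) and (ii) are functions only of the column ordering and the diagonal difference, so $(A,B)$ is attacking in $\pi$ if and only if $(F(A)', F(B)')$ is attacking in $F(\pi)'$.

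The main delicacy is pinning down the row coordinate under $F$, where big-labeled rows disappear, and verifying that the resulting diagonal-shift is genuinely column-independent; once these formulas are in hand, the equivalence is just bookkeeping, with the admissibility condition $w_c \ge 1$ ensuring that no collapse of columns occurs.
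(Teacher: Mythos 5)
Your proposal is correct and follows essentially the same route as the paper: track the coordinates of a labeled box through $\pi\mapsto F(\pi)\mapsto F(\pi)'$ and check that diagonal differences (your computation that $d'(F(A)')-d_\pi(A)=n-k$ is the same bookkeeping as the paper's) and the left-to-right column order are both preserved, so the attacking condition transfers. Your explicit remark that admissibility gives $w_c\ge 1$, hence no collapse of column order, is a point the paper leaves implicit, but the argument is the same.
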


\begin{proof}
Suppose that $A=(i,x)$ and $B=(j,y)$ and $i<j$. They are on the same diagonal if and only if
\begin{equation}
\label{eq: same diagonal}
y-x=(n-k+1)(j-i).
\end{equation}
We have
$$
F(A)=(i,x-(b_1+\cdots+b_{i-1})),\ F(B)=(j,y-(b_1+\cdots+b_{j-1}))
$$
and
$$
F(A)'=(w_1+\cdots+w_{i-1}+1,x-(b_1+\cdots+b_{i-1})),\ F(B)'=(w_1+\cdots+w_{j-1}+1,y-(b_1+\cdots+b_{j-1})).
$$
Now
$$
(y-(b_1+\cdots+b_{j-1}))-(x-(b_1+\cdots+b_{i-1}))=y-x-(b_i+\cdots+b_{j-1})
$$
while 
$$
(w_1+\cdots+w_{j-1}+1)-(w_1+\cdots+w_{j-1}+1)=w_i+\cdots+w_{j-1}=(j-i)(n-k+1)-(b_i+\cdots+b_{j-1}).
$$
By \eqref{eq: same diagonal}, $A$ and $B$ are on the same diagonal if and only if $F(A)$ and $F(B)$ are on the same diagonal. The same proof shows that if $A$ and $B$
 are on neighboring diagonals then $F(A)'$ and $F(B)'$ are on the neigboring diagonals as well.
\end{proof}

\begin{cor}\label{cor: tdinv hdinv}
    The statistic $\hdinv(F(\pi))$ is equal to $\tdinv_\sm(\pi)$, the number of $\tdinv$'s between the small labels in $\pi$. 
\end{cor}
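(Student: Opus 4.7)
The plan is to deduce the corollary directly from Lemma \ref{lem: tdinv match} by tracking labels through the composite map $A \mapsto F(A) \mapsto F(A)'$. Both $F$ (which deletes the vertical steps of the Dyck path above big labels) and the explosion map $P \mapsto P'$ of Definition \ref{def: dinv for stacks} (which inserts empty columns and re-closes the Dyck path horizontally) act on labeled boxes purely by repositioning them; neither ever changes the label that a given box carries. Hence the composite sends small-labeled boxes of $\pi$ bijectively to labeled boxes of $F(\pi)'$, preserving the integer label attached to each box.

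Combining this label-preservation observation with Lemma \ref{lem: tdinv match} gives that an ordered pair $(A,B)$ of small-labeled boxes in $\pi$ forms an attacking pair in the $(K,k)$ sense if and only if $(F(A)', F(B)')$ forms an attacking pair in the $n\times n$ sense used in Definition \ref{def: dinv for stacks}, and the pair of labels transferred is identical on both sides. The correspondence therefore restricts to a bijection between the attacking-pair indicators contributing to $\tdinv_\sm(\pi)$ and those contributing to $\dinv(F(\pi)') = \hdinv(F(\pi))$, yielding the claimed equality.

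The real content of the argument is Lemma \ref{lem: tdinv match}, so no new combinatorial work is needed. The only bookkeeping step, and the one I expect to be the mild obstacle, is reconciling the strict inequality $a<b$ in the definition of $\tdinv$ with the weak inequality $\alpha\le\beta$ appearing in Definition \ref{def: dinv for stacks}. Because the attacking relation is inherently oriented by the positions of the two boxes and the bijection preserves labels on the nose, each attacking pair of small boxes of $\pi$ contributes the same indicator on both sides of the equality, so the conventions line up and the corollary follows.
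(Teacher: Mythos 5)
Your proposal is correct and matches the paper's (implicit) argument: the corollary is stated without a separate proof precisely because it is the combination of Lemma \ref{lem: tdinv match} with the observation that both $F$ and the map $P\mapsto P'$ of Definition \ref{def: dinv for stacks} reposition boxes without altering their labels. The strict-versus-weak discrepancy you flag at the end is an inconsistency in the paper's statement of Definition \ref{def: dinv for stacks} (the intended label condition is $\alpha<\beta$, matching Definition \ref{def:tdinv}), not something your argument needs to repair.
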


\subsection{Proof setup and outline} \label{sec: LLT}

Returning to the motivation from standard parking functions, given a $(K,k)$-parking function $\pi$ with numbers $1,2,\ldots,K$ used exactly once, we write:
\begin{itemize}
\item $\tdinv_{\sm}(\pi)$ is the number of diagonal inversions between the boxes with small labels.
\item $d(\underline{s},\underline{b})$ is the number of pairs $(c,c')$ such that $c$ is a small box, $c'$ is a big box, and $(c,c')$ are attacking (in that order).
\item $\tdinv_{\bg}(\pi)$ is the number of diagonal inversions between the boxes with big labels.
\end{itemize}
Note that $d(\underline{s},\underline{b})$ depends only on the positions of big and small labels, but not on a specific parking function $\pi$. Also,
\begin{equation}
\label{eq: tdinv big and small}
\tdinv(\pi)=\tdinv_{\sm}(\pi)+d(\underline{s},\underline{b})+\tdinv_{\bg}(\pi).
\end{equation}

We generalize these notions to word parking functions as follows. Given a $K\times k$ Dyck path $D$ and any weak composition $\underline{b} = (b_1,\dots, b_k)$ of $(n-k)(k-1)$ such that $b_i$ is at most the number of vertical steps in column $i$ of $D$, we will say the \textbf{big boxes} of $D$ are the top $b_i$ boxes in column $i$ that are immediately to the right of a vertical step of $D$. Similarly, the \emph{small boxes} of $D$ are the remaining boxes to the right of vertical steps. We introduce symmetric functions 
$$
f_{D,\underline{b}}=\sum_{\pi_{\bg}\in \WPF(D,b)}q^{\tdinv_{\bg}(\pi_\bg)}x^{\pi_{\bg}},\quad f_{D,\underline{s}}=\sum_{\pi_{\sm}\in \WPF(D,s)}q^{\tdinv_{\sm}(\pi_\sm)}x^{\pi_{\sm}}
$$
where $\tdinv_\bg$ measures the number of diagonal inversions on big labels of $\pi_\bg$, and $\tdinv_\sm$ measures the number of inversions on small labels of $\pi_\sm$.  We also still write $d(\underline{s},\underline{b})$ for the number of attacking pairs between big and small boxes such if the big box is labeled with a larger number than the small box, it would form an inversion. Note that all of these statistics and polynomials are defined for general $\underline{b}$, not necessarily admissible.

Finally, we define the statistic
\begin{equation} \label{eq:constant}
    c_{D,\underline{b}} = \maxtdinv(D) - \pathdinv(D) - d(\underline{s},\underline{b}).
\end{equation}

The following is our main combinatorial result. The proof relies on several combinatorial constructions which we outline and prove in detail in Sections \ref{sec: sign reversing} and \ref{sec: coefficient}, but we provide pinpoint references to these results here.

\begin{thm}
\label{thm: perping big main}
Let $D$ be a Dyck path and $\underline{b}$ be an admissible sequence (so $b_i\leq n-k$ and at most the number of vertical steps of $D$ in column $i$). Then
    \[
    \langle \omega f_{D,\underline{b}}[X;q], s_{(n-k)^{k-1}}\rangle =\langle  f_{D,\underline{b}}[X;q], s_{(k-1)^{n-k}}\rangle = q^{c_{D,\underline{b}}}.
    \]
     If $\underline{b}$ is not admissible, then $\langle f_{D,\underline{b}}[X;q], s_{(k-1)^{n-k}}\rangle = 0$.
\end{thm}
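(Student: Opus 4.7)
The plan is to begin from the Jacobi--Trudi expansion
$$
s_{(k-1)^{n-k}}=\sum_{\sigma\in S_{n-k}}\sgn(\sigma)\prod_{i=1}^{n-k}h_{k-1+i-\sigma(i)},
$$
with the convention $h_m=0$ for $m<0$. Pairing with $f_{D,\underline{b}}$ and using the duality $\langle g,h_\mu\rangle=[x^\mu]g$ valid for any symmetric $g$, the right-hand side of the claim becomes
$$
\langle f_{D,\underline{b}},\, s_{(k-1)^{n-k}}\rangle \;=\;\sum_{\sigma\in S_{n-k}}\sgn(\sigma)\sum_{\pi\in\WPF(D,\underline{b})_\sigma}q^{\tdinv_{\bg}(\pi)},
$$
where $\WPF(D,\underline{b})_\sigma$ denotes the column-strict labelings of the big boxes in which the label $i\in\{1,\dots,n-k\}$ appears exactly $k-1+i-\sigma(i)$ times and no other labels appear.

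For the non-admissible case, every labeling in $\WPF(D,\underline{b})_\sigma$ uses values only from $\{1,\dots,n-k\}$, so a column with $b_j>n-k$ admits no column-strict filling and each inner sum (and hence the total) vanishes. For admissible $\underline{b}$, the plan is to construct a sign-reversing, $\tdinv_{\bg}$-preserving involution $\Phi$ on the disjoint union $\bigsqcup_\sigma\{\sigma\}\times\WPF(D,\underline{b})_\sigma$, in the spirit of the Lindstr\"om--Gessel--Viennot proof of Jacobi--Trudi. Concretely, I would view each labeling as a tuple of ``paths,'' one for each label value, scan (in a fixed reading order on boxes) for the first place two paths violate a prescribed non-crossing condition, and swap the offending label values past the violation point; this acts on $\sigma$ by an adjacent transposition and so flips the sign. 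The fixed points of $\Phi$ are the non-crossing configurations, which by the classical LGV bijection with SSYT combined with the rectangular Kostka computation recorded before Corollary~\ref{cor: dual rectangle} reduce to a single canonical labeling $\pi^\star$ with $\sigma=\mathrm{id}$ precisely when $\underline{b}$ is admissible.

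The main obstacle is to verify that the swap genuinely preserves $\tdinv_{\bg}$. Unlike the usual LGV weighting, $\tdinv_{\bg}$ is global across columns: each swap alters individual attacking pairs involving the two swapped labels, and one must check that gains and losses cancel exactly. I expect this to boil down to the fact that on the diagonals of $D$ the swap exchanges the roles of two labels at a symmetric pair of positions, leaving invariant the multiset of ordered attacking pairs. Once $\Phi$ is in hand, it remains to identify $\tdinv_{\bg}(\pi^\star)$ with the claimed exponent $c_{D,\underline{b}}=\maxtdinv(D)-\pathdinv(D)-d(\underline{s},\underline{b})$. I would do this by induction, for example on the number of big boxes: remove the topmost box from a rightmost column with maximal $b_i$, examine how each of $\maxtdinv(D)$, $\pathdinv(D)$, $d(\underline{s},\underline{b})$, and $\tdinv_{\bg}(\pi^\star)$ changes, and show the increments agree. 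Finally, the first equality in the statement, $\langle \omega f_{D,\underline{b}},s_{(n-k)^{k-1}}\rangle=\langle f_{D,\underline{b}},s_{(k-1)^{n-k}}\rangle$, is immediate because $\omega$ is an isometry of the Hall inner product sending $s_\lambda$ to $s_{\lambda'}$, and $(k-1)^{n-k}$ and $(n-k)^{k-1}$ are conjugate.
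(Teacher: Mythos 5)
Your setup is sound and matches the paper's strategy in outline: expand $s_{(k-1)^{n-k}}$ by Jacobi--Trudi, interpret $\langle f_{D,\underline{b}},h_{\talpha}\rangle$ as a $q$-count of column-strict fillings of the big boxes with content $\talpha$, kill the non-admissible case because all labels lie in $\{1,\dots,n-k\}$, and dispatch the first equality by noting $\omega$ is an isometry exchanging conjugate Schur functions. All of that is correct. However, the two steps that carry essentially all of the content of the theorem are not actually supplied. First, you explicitly defer the construction of the sign-reversing involution $\Phi$ and flag its $\tdinv_{\bg}$-preservation as ``the main obstacle,'' offering only the expectation that an LGV-style crossing swap will leave the multiset of attacking pairs invariant. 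This is precisely where a naive approach fails: a swap of two label values at a crossing changes attacking pairs \emph{globally} across all columns, and there is no symmetry forcing cancellation. The paper's resolution is not an LGV swap at all --- it first passes to the \emph{complement} fillings (Lemma~\ref{lem:complement-sums} shows $\Inv(C_1,C_2)+\overline{\Inv}(\overline{C}_1,\overline{C}_2)$ depends only on column heights and positions, so one may control $\overline{\Inv}$ instead of $\tdinv_{\bg}$), then defines an involution on complement \emph{reading words} with allowable contents (Definition~\ref{def: sign reversing}) that raises or lowers a single carefully chosen letter while preserving tied inversions. Without some device of this kind, your ``symmetric pair of positions'' heuristic does not yield a proof. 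Relatedly, your claim that the non-crossing fixed points collapse to a single $\pi^\star$ does not follow from the rectangular Kostka computation alone; that computation gives the count at $q=1$, whereas you need a unique fixed point of the specific involution you construct.

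Second, the identification $\tdinv_{\bg}(\pi^\star)=c_{D,\underline{b}}=\maxtdinv(D)-\pathdinv(D)-d(\underline{s},\underline{b})$ is only proposed as ``induction on the number of big boxes'' with the increments to be checked. In the paper this is the longest part of the argument (Theorem~\ref{thm: shrunken fixed point} together with Lemmas~\ref{lem:reduction-1} and~\ref{lem:reduction-2}): one reinterprets all three statistics on a shrunken diagram, reduces the leftmost non-basic column step by step, and tracks an alternating cancellation of ``$0$-gains'' and ``$m$-losses'' to show the quantity $\maxInv-\pathdinv-\Inv$ is invariant under each reduction. Your proposed induction is plausible as a plan but none of the bookkeeping is done, and the choice of reduction (removing a topmost box of a rightmost column of maximal $b_i$) would still require verifying that the modified filling remains the canonical fixed point of your involution. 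As it stands the proposal is a correct skeleton with the two load-bearing lemmas missing.
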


\begin{proof}
The proof goes in several steps.  

{\bf Step 1:} Given a composition $\talpha=(\talpha_1,\ldots,\talpha_\ell)$, the pairing $\langle h_{\talpha}, f_{D,\underline{b}}[X;q]\rangle$ equals the coefficient of $f_{D,\underline{b}}[X;q]$ at the monomial symmetric function $m_{\talpha}$, which counts the column-strict fillings of $(D,\underline{b})$ with content $\talpha$.
We denote the set of such labelings by $P_{D,\underline{b},\talpha}$ and write
\begin{equation}
\label{eq: monomial coefficient}
\langle h_{\talpha}, f_{D,\underline{b}}[X;q]\rangle=\sum_{P\in P_{D,\underline{b},\talpha}}q^{\tdinv_{\bg}(P)}
\end{equation}
Note that $(D,\underline{b})$ has at most $k$ vertical runs, so in a column-strict filling of $(D,\underline{b})$ any label is repeated at most $k$ times. Therefore, for $\talpha_i\ge k+1$  there are no such fillings.

{\bf Step 2:} We expand the Schur function using the Jacobi-Trudi formula, where we replace any $h_j$ with $0$ if $j\ge k+1$:
\begin{equation}\label{eq:det truncated}
    s_{(k-1)^{n-k}}=\det \begin{pmatrix}
    h_{k-1} & h_k & 0 & 0 & \cdots & 0 \\
    h_{k-2} & h_{k-1} & h_k & 0 & \cdots & 0 \\
    h_{k-3} & h_{k-2} & h_{k-1} & h_k & \cdots & 0 \\
    \vdots &          &         & \ddots & \ddots & 0 \\
    h_{2k-n+1} & \ldots &       &        & h_{k-1} & h_k \\
    h_{2k-n} & \ldots &         &        &        & h_{k-1}
\end{pmatrix}
\mod (h_{j},j\ge k+1).
\end{equation}

For a composition $\talpha$ such that $h_{\talpha} = h_{\talpha_1}\cdot \cdots \cdot h_{\talpha_{n-k}}$ appearing in the expansion of \eqref{eq:det truncated}, we write $\alpha$ to be the complementary composition where $\alpha_i=k-\talpha_i$ for all $i$.
We call all resulting compositions $\alpha$ {\bf allowable contents} (see Definition \ref{def: admissible} and Lemma \ref{lem: allowable-contents}), and rewrite \eqref{eq:det truncated} as
$$
s_{(k-1)^{n-k}}=\sum_{\alpha\ \mathrm{allowable}}(-1)^{\sgn(\talpha)}h_{\talpha} \mod (h_{j},j\ge k+1).
$$
For the definition of $\sgn(\talpha)$, see \eqref{eq: sign alpha}.
By combining this with \eqref{eq: monomial coefficient} and observing that compositions with $\talpha_j\ge k+1$  do not contribute to the sum, we get
\begin{equation}
\label{eq: Jacobi Trudi expanded}
\langle s_{(k-1)^{n-k}}, f_{D,\underline{b}}[X;q]\rangle=\sum_{\alpha\ \mathrm{allowable}}\sum_{P\in P(D,\underline{b},\talpha)}(-1)^{\sgn(\talpha)}q^{\tdinv_{\bg}(P)}.
\end{equation}
Also, since $\talpha$ has $(n-k)$ parts,  the set $P(D,\underline{b},\talpha)$ is empty for all $\talpha$ if $b_i> n-k$ for some $i$ (that is, $\underline{b}$ is not admissible). This implies that $\langle  f_{D,\underline{b}}[X;q], s_{(k-1)^{n-k}}\rangle = 0$ whenever $\underline{b}$ is not admissible which proves the second part of the theorem. From now on we assume $b_i\le n-k.$

{\bf Step 3:} This is the crucial step. In Definition \ref{def: sign reversing} we define a sign-reversing involution $\varphi$  and prove in Theorem \ref{thm: sign reversing} the following:
\begin{itemize}
\item $\varphi$ is an involution  on the set of  column-strict fillings with allowable contents
\item $\varphi$ has a unique fixed point, which we denote $P^0_{D,\underline{b}}$, that has positive sign,
\item $\varphi$ preserves the statistics $\tdinv_{\bg}$   and reverses the sign $(-1)^{\sgn(\talpha)}$ for every element except $P^0_{D,\underline{b}}$.
\end{itemize}

{\bf Step 4:} By the previous step, the terms in \eqref{eq: Jacobi Trudi expanded} cancel in pairs according to the involution $\varphi$, and we are left with a single term corresponding to the fixed point: 
$$
\langle s_{(k-1)^{n-k}},f_{D,\underline{b}}[X;q]\rangle=q^{\tdinv_{\bg}(P_{D,\underline{b}}^0)}.
$$
To complete the proof, we prove in  Theorem \ref{thm: shrunken fixed point} (combined with Lemma \ref{lem: pathdinv}) that $\tdinv_{\bg}(P^0_{D,\underline{b}})=c_{D,\underline{b}}$.  By Equation \eqref{eq:constant}, this translates to showing the combinatorial fact that \begin{equation}\label{eq: fixed-point equality}\tdinv_\bg(P^0_{D,\underline{b}})=\maxtdinv(D)-\pathdinv(D)-d(\underline{s},\underline{b})
\end{equation}
This is highly nontrivial and occupies most of Subsection \ref{sec: coefficient} (see Theorem \ref{thm: shrunken fixed point}).
\end{proof}

Assuming all of the above steps, the main combinatorial result can now be proven.

\begin{combimain}
We have
$$
s_{(k-1)^{n-k}}^{\perp}\sum_{\pi\in \WPF_{K,k}}t^{\area(\pi)}q^{\dinv(\pi)}x^{\pi}=\sum_{P\in \mathrm{WLD}_{n,k}^{\stack}} t^{\area(P)} q^{\hdinv(P)} x^P.
$$  where the sums are over column-strict parking functions that may have repeats between columns.  In other words,
$$s_{(k-1)^{n-k}}^\perp E_{K,k}\cdot 1 = \Delta'_{k-1} e_n$$
holds combinatorially.
\end{combimain}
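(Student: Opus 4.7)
The plan is to rewrite the left-hand side by decomposing column-strict labelings into ``big'' and ``small'' parts via a Cauchy-style argument, then invoke Theorem \ref{thm: perping big main} to evaluate the skewing on the big parts. For each Dyck path $D$, let $F_D(X;q) = \sum_\pi q^{\tdinv(\pi)} x^\pi$ denote the LLT polynomial summing over column-strict labelings of the boxes immediately right of the vertical runs of $D$. Since $\dinv = \pathdinv + \tdinv - \maxtdinv$ and $\area$ depends only on the path, the rational shuffle generating function becomes $\sum_D t^{\area(D)}q^{\pathdinv(D)-\maxtdinv(D)}F_D(X;q)$, so applying $s_{(k-1)^{n-k}}^\perp$ reduces to understanding $s_{(k-1)^{n-k}}^\perp F_D(X;q)$ for each $D$.

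The key step is to introduce a second alphabet $Y=(y_1,y_2,\ldots)$ with $x_i<y_j$ for all $i,j$ and decompose $F_D[X+Y;q]$. A column-strict labeling of $D$ in $X\cup Y$ factors uniquely by cutting each column into a bottom ``small'' block labeled in $X$ and a top ``big'' block labeled in $Y$; recording the block sizes in $\underline{b}$ (big) and $\underline{s}$ (small), the two pieces are independent column-strict labelings. Since any attacking pair with a small box and a big box automatically satisfies small $<$ big, the diagonal-inversion count splits as $\tdinv = \tdinv_\sm + \tdinv_\bg + d(\underline{s},\underline{b})$, yielding
\[
F_D[X+Y;q] = \sum_{\underline{b}} q^{d(\underline{s},\underline{b})}\, f_{D,\underline{s}}(X;q)\, f_{D,\underline{b}}(Y;q).
\]
By the Cauchy identity $F_D[X+Y;q] = \sum_\lambda (s_\lambda^\perp F_D)(X;q)\,s_\lambda(Y)$, the operator $s_{(k-1)^{n-k}}^\perp$ extracts the coefficient of $s_{(k-1)^{n-k}}(Y)$ in the expansion above. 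Plugging in Theorem \ref{thm: perping big main}, which gives $\langle f_{D,\underline{b}}(Y;q), s_{(k-1)^{n-k}}(Y)\rangle = q^{c_{D,\underline{b}}}$ for admissible $\underline{b}$ and $0$ otherwise, and using the identity $c_{D,\underline{b}} = \maxtdinv(D)-\pathdinv(D)-d(\underline{s},\underline{b})$, the $q$-powers cancel completely to give
\[
s_{(k-1)^{n-k}}^\perp F_D(X;q) = q^{\maxtdinv(D)-\pathdinv(D)}\sum_{\underline{b}\text{ admissible}} f_{D,\underline{s}}(X;q).
\]
Summing over $D$ with the factor $t^{\area(D)} q^{\pathdinv(D)-\maxtdinv(D)}$ then simplifies the entire LHS to $\sum_{(D,\underline{b})\text{ admissible}} t^{\area(D)} f_{D,\underline{s}}(X;q)$.

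For the right-hand side, I would invoke the bijection $F$ of Lemma \ref{lem: admissible to stacks} between admissible pairs $(D,\pi_\sm)$ and $\LD_{n,k}^{\stack}$: by Lemma \ref{lem: area match} it preserves area, and by Corollary \ref{cor: tdinv hdinv} it satisfies $\hdinv(F(\pi))=\tdinv_\sm(\pi)$. Re-indexing the RHS through $F$ yields $\sum_{(D,\underline{s})\text{ admissible}} t^{\area(D)} f_{D,\underline{s}}(X;q)$, which matches the simplified LHS. The bulk of the difficulty has already been absorbed into Theorem \ref{thm: perping big main} (via the sign-reversing involution $\varphi$ and the fixed-point statistic identity \eqref{eq: fixed-point equality}); everything in the present argument is essentially bookkeeping combining the LLT/Cauchy decomposition with the $F$-bijection, so I do not anticipate any new technical obstacles here.
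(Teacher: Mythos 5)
Your proposal is correct and follows essentially the same route as the paper: decompose $f_D[X+Y;q]$ into big/small pieces using $\tdinv = \tdinv_{\mathrm{small}} + \tdinv_{\mathrm{big}} + d(\underline{s},\underline{b})$, pair against $s_{(k-1)^{n-k}}(Y)$, apply Theorem \ref{thm: perping big main} so the powers of $q$ cancel against $q^{\pathdinv(D)-\maxtdinv(D)}$, and finish with the bijection $F$ together with Lemmas \ref{lem: admissible to stacks}, \ref{lem: area match} and Corollary \ref{cor: tdinv hdinv}. The only cosmetic difference is that you derive the two-alphabet decomposition directly from the ordered-alphabet splitting of columns, where the paper cites the analogous identity from the literature.
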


\begin{proof}
We fix a $K\times k$ Dyck path $D$ and denote 
$$
f_{D}=\sum_{\pi\in \WPF_{K,k}(D)}q^{\tdinv(\pi)}x^{\pi}.
$$
where the sum is over column-strict word parking functions.

It is easy to see from \eqref{eq: tdinv big and small} (compare \cite[Equation (11)]{BHMPS3}) that
$$
f_D[X+Y;q]=\sum_{\underline{b},\underline{s}}q^{d(\underline{s},\underline{b})}f_{D,\underline{s}}[X;q]f_{D,\underline{b}}[Y;q]
$$
where the sum is over all possible decompositions of vertical steps of $D$ into big and small.  
Now
$$
s_{(k-1)^{n-k}}^{\perp}f_D=\langle s_{(k-1)^{n-k}}(Y),f_D[X+Y;q]\rangle = \sum_{\underline{b},\underline{s}}q^{d(\underline{s},\underline{b})}f_{D,\underline{s}}[X;q]\langle s_{(k-1)^{n-k}}(Y),f_{D,\underline{b}}[Y;q]\rangle.
$$
By Theorem \ref{thm: perping big main} we can rewrite this as
\begin{equation}
\label{eq: perping fD}
s_{(k-1)^{n-k}}^{\perp}f_D=\sum_{\underline{b},\underline{s}}q^{d(\underline{s},\underline{b})+c_{D,\underline{b}}}f_{D,\underline{s}}[X;q]=\sum_{\underline{b},\underline{s}}q^{\maxtdinv(D)-\pathdinv(D)}f_{D,\underline{s}}[X;q].
\end{equation}
where the sum is over admissible decompositions $(\underline{b},\underline{s})$. Therefore
\begin{align*}
s_{(k-1)^{n-k}}^{\perp}\sum_{\pi\in \WPF_{K,k}}t^{\area(\pi)}q^{\dinv(\pi)}x^{\pi} 
&= 
s_{(k-1)^{n-k}}^{\perp}\sum_{D\in \Dyck(K,k)}t^{\area(D)}q^{\pathdinv(D)-\maxtdinv(D)}f_D \\
&=
\sum_{D\in \Dyck(K,k)}t^{\area(D)}q^{\pathdinv(D)-\maxtdinv(D)}s_{(k-1)^{n-k}}^{\perp}f_D \\
&=
\sum_{D\in \Dyck(K,k)}\sum_{\underline{b},\underline{s}}t^{\area(D)}f_{D,\underline{s}}.
\end{align*}
Here the first equation follows from the definitions
$$
\area(\pi)=\area(D),\ \dinv(\pi)=\pathdinv(D)+\tdinv(\pi)-\maxtdinv(D),
$$
the second equation is the linearity of $s_{(n-k)^{k-1}}^{\perp}$ and the last equation follows from \eqref{eq: perping fD}.

By Lemma \ref{lem: admissible to stacks} there is a bijection $F$ between small parking functions $\pi$ in some $\WPF(D,\underline{s})$ for some $D$ and some admissible $\underline{s}$ sequence, and stacked parking functions.  Furthermore, by Lemma \ref{lem: area match} and Corollary \ref{cor: tdinv hdinv} we have 
$$
\tdinv_{\sm}(\pi)=\hdinv(F(\pi)),\hspace{1cm} \area(D)=\area(F(\pi)),
$$
so
\begin{align*}
\sum_{D\in \Dyck(K,k)}\sum_{\underline{b},\underline{s}}t^{\area(D)}f_{D,\underline{s}}
&=\sum_{D\in \Dyck(K,k)}\sum_{\underline{b},\underline{s}}\sum_{\pi\in \WPF(D,\underline{s}))}t^{\area(\pi)}q^{\tdinv_{\sm}(\pi)}x^{\pi} \\
&=
\sum_{F(\pi)\in \LD^{\stack}_{n,k}}t^{\area(F(\pi))}q^{\hdinv(F(\pi))}x^{F(\pi)}. \qedhere
\end{align*}
\end{proof}

\subsection{Sign-reversing involution}
\label{sec: sign reversing}

We now provide the details for Step 3 of the proof of Theorem \ref{thm: perping big main}.  An admissible tuple $\underline{b}$ for the number of big entries in each column, $b_1,\ldots,b_k$, satisfies $$\sum b_i=K-n=k(n-k+1)-n=(k-1)(n-k)$$ and $b_i\le n-k$ for all $i=1,\ldots,k$.  This means that we can form the set of big boxes by first setting the top $n-k$ elements of each column to be big, and then removing a total of exactly $n-k$ of these boxes from the bottoms of the columns. 

Let $\talpha=(\talpha_1,\ldots,\talpha_{n-k})$ be a composition such that $0\le \talpha_i\le k$ and $\sum_{i=1}^{n-k}\talpha_i=(k-1)(n-k)$.
Let $P_{\underline{b},\talpha}$ be a way of filling the $\underline{b}$-labeled boxes in a given Dyck path with content $\talpha$ (that is, the number of $1$'s equals $\talpha_1$, the number of $2$'s equals $\talpha_2$ and so on), with the columns increasing.  We call such a filling a \textbf{$\underline{b}$-parking function}.  Since $\sum \talpha_i=\sum b_i$, the labels in a $\underline{b}$-parking function range from 1 to $n-k$, as opposed to general word parking functions.

In order to simplify the diagrams and notation, we shrink the height of the diagram as follows.

\begin{defn}
    The \textbf{shrunken diagram} corresponding to a $\underline{b}$-parking function $P_{\underline{b},\talpha}$ is obtained by shifting the $i$-th column of big entries down exactly $(n-k+1)(i-1)=(K/k) (i-1)$ steps for each $i$.  We write $\mathrm{shrink}(P_{\underline{b},\talpha})$ for the resulting diagram.  (See Figure \ref{fig:shrunken}).
\end{defn}

\begin{figure}
\begin{center}
    \includegraphics{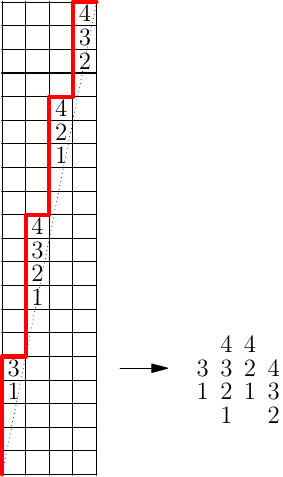}
\end{center}
\caption{\label{fig:shrunken} A $\underline{b}$-parking function and its shrunken diagram.}
\end{figure}

Notice that attacking pairs from the original $\underline{b}$-parking function become either pairs of entries $(x,y)$ in the same row with $x$ left of $y$, or in adjacent rows with $x$ below and to the right of $y$.  

\begin{rmk}These are the same type of attacking pairs as used in the definition of the $\inv$ statistic for the combinatorial formula for Macdonald polynomials in \cite{HHL} (but reversed from left to right).  However, the Macdonald $\inv$ statistic also subtracts the arms of certain boxes.  The set of attacking pairs in which $x<y$ is denoted by $\Inv$ in \cite{HHL}, so we use $\Inv$ here for the \textbf{number} of inversions of a $b$-parking function.  
\end{rmk}

\begin{defn} \label{def:inv}
    We define $\Inv(\mathrm{shrink}(P_{\underline{b},\talpha}))$ to be the number of pairs $(x,y)$ in which $x<y$ and either $x$ is left of $y$ in the same row, or $x$ is right of $y$ and one row below $y$.
\end{defn}

The following observation is now clear.

\begin{prop}
\label{prop: shrink}
    We have $\Inv(\mathrm{shrink}(P_{\underline{b},\talpha}))=\tdinv_\bg(P_{\underline{b},\talpha})$.
\end{prop}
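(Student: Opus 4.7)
The plan is to verify the equality by directly tracking how the shrinking map transforms each of the two types of attacking pairs into the two conditions appearing in Definition \ref{def:inv}. Since the shrinking map is a label-preserving bijection between the big boxes of $P_{\underline{b},\talpha}$ and the boxes of $\mathrm{shrink}(P_{\underline{b},\talpha})$, and both $\tdinv_\bg$ and $\Inv$ count pairs $(x,y)$ with $x<y$ satisfying a purely geometric condition on box positions, it suffices to check that the geometric condition for an attacking pair on big boxes corresponds under shrinking to the geometric condition of Definition \ref{def:inv}.

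Concretely, let $a$ and $b$ be big boxes in columns $i_1$ and $i_2$ at heights $y_1$ and $y_2$ of the original diagram. Under shrinking these move to positions with new heights $y_1 - (K/k)(i_1-1)$ and $y_2 - (K/k)(i_2-1)$ respectively, while the column indices are unchanged. In the first attacking case, $a$ and $b$ lie on the same diagonal with $a$ to the left, so $i_1 < i_2$ and $y_2 - y_1 = (K/k)(i_2 - i_1)$. A direct computation then shows that the difference of the new heights is zero, so $a$ and $b$ lie in the same row of the shrunken diagram with $a$ still to the left of $b$, which is the first condition in Definition \ref{def:inv}.

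In the second attacking case, $a$ is to the right of $b$ (so $i_1 > i_2$) and $a$'s diagonal is one below $b$'s diagonal, which in terms of the diagonal index $y - (K/k)i$ means $y_1 - (K/k)i_1 = y_2 - (K/k)i_2 - 1$. Computing the difference of new heights gives exactly $-1$, so $a$ lies one row below $b$ while remaining to the right of $b$, matching the second condition in Definition \ref{def:inv}. Reversing the shrinking procedure shows that every pair realizing an $\Inv$ condition in the shrunken diagram comes from a unique attacking pair in the original, establishing a bijection between the two sets of pairs.

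The main obstacle in this argument is essentially bookkeeping: one must confirm that ``one diagonal below'' corresponds to a unit difference in the diagonal index $y - (K/k)i$, which is consistent with the definition of a diagonal as a set of boxes differing by $(1, K/k)$, so that adjacent diagonals are separated by a single unit of $y$ within a fixed column. Once this is clear, the shrinking map preserves labels, so the conditions ``$x<y$'' match on both sides, and the desired equality $\Inv(\mathrm{shrink}(P_{\underline{b},\talpha})) = \tdinv_\bg(P_{\underline{b},\talpha})$ follows immediately.
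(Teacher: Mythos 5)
Your proposal is correct and follows exactly the route the paper takes: the paper treats this proposition as an immediate consequence of the observation that shifting column $i$ down by $(K/k)(i-1)$ sends same-diagonal attacking pairs to same-row pairs and adjacent-diagonal attacking pairs to adjacent-row pairs, which is precisely the computation you carry out. Your version simply makes explicit the diagonal-index bookkeeping (valid since $K/k=n-k+1$ is an integer) that the paper leaves to the reader.
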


We therefore will replace $\tdinv_\bg$ with $\Inv$ in the shrunken diagram from here on.

\begin{example}
    Consider the parking function in Figure \ref{fig:shrunken} for $n-k=4$ (in this example, $k=4$, $K=20$).   We lower each subsequent column by another $n-k+1=5$ so that the same-diagonal entries are actually on the same row in the diagram at right.  
\end{example}

\begin{defn}
For a column $C$ of big letters of size $b$, define its \textbf{complement} column $\overline{C}$  to be the filling of the $n-k-b$ boxes below $C$ respectively with the complementary set in $\{1,2,\ldots,n-k\}$, increasing from top to bottom. 
\end{defn}

\begin{defn}
If $\talpha=(\talpha_1,\ldots,\talpha_{n-k})$ is a content vector with $0\le \talpha_i\le k$, we define the complementary content $\alpha=(\alpha_1,\ldots,\alpha_{n-k})$ where
\begin{equation}
\label{eq: alpha}
\alpha_i=k-\talpha_i.
\end{equation}
\end{defn}

\begin{lem}
Let $P$ be a filling of $\underline{b}$ with content $\talpha$.  Then its complement $\overline{P}$  has content $\alpha$. 
\end{lem}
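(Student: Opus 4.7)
The plan is to simply count occurrences of each label $i \in \{1,2,\ldots,n-k\}$ in $\overline{P}$ and show it equals $\alpha_i = k - \talpha_i$.

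First I would observe that because the labels in each column are strictly increasing and drawn from the set $\{1,\ldots,n-k\}$, any label $i$ appears at most once in each column of $P$. Thus the number of columns of $P$ that contain $i$ is exactly $\talpha_i$, the total multiplicity of $i$ in $P$. The analogous statement holds for $\overline{P}$: each label appears at most once per column, so the total multiplicity of $i$ in $\overline{P}$ equals the number of columns of $\overline{P}$ containing $i$.

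The key point is then that in any single column $C$ of size $b$, the complement $\overline{C}$ is defined to contain exactly the elements of $\{1,\ldots,n-k\} \setminus L_C$, where $L_C$ is the set of labels of $C$. Hence for each $i \in \{1,\ldots,n-k\}$, exactly one of $C$ and $\overline{C}$ contains $i$. Summing over all $k$ columns gives
\[
(\text{number of columns of } P \text{ containing } i) + (\text{number of columns of } \overline{P} \text{ containing } i) = k.
\]
Therefore the multiplicity of $i$ in $\overline{P}$ equals $k - \talpha_i = \alpha_i$, which is precisely the $i$-th coordinate of the complementary content. This holds for every $i$, so $\overline{P}$ has content $\alpha$, proving the lemma.

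There is no real obstacle here; the argument is essentially a two-line bookkeeping using the disjoint union $L_C \sqcup L_{\overline{C}} = \{1,\ldots,n-k\}$ in each column together with linearity of counting over the $k$ columns. I would simply present it as a short direct computation.
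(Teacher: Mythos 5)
Your proof is correct and is essentially the same argument as the paper's: the paper observes that the union of $P$ and $\overline{P}$ consists of $k$ columns each containing $1,\ldots,n-k$ exactly once, hence has content $(k,\ldots,k)$, and concludes by subtraction. Your column-by-column bookkeeping is just a slightly more explicit rendering of that same observation.
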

\begin{proof}
The union of $P$ and $\overline{P}$ has exactly $k$ columns each containing $1,2,\ldots,n-k$ exactly once, so has content $(k,k,k,\ldots,k)$ of length $n-k$.  Thus the content of $\overline{P}$ plus the content of $P$ is this tuple.
\end{proof}

An example of the complementary fillings of columns are shown in red in Figures \ref{fig:complement-pairs} and \ref{fig:sign-reversing-PF}.  We count inversions slightly differently in the complement $\overline{P}$.

\begin{defn}
 We define $\overline{\Inv}(\overline{P})$ for a complement $\overline{P}$ of a shrunken $\underline{b}$-parking function $P$ to be the number of attacking pairs $(x,y)$ where $x\le y$ (rather than just $x<y$ as in ordinary inversions).  That is, we allow ties in the complement, and we call these inversions \textbf{tied inversions}.

 We similarly write $\Inv(C_1,C_2)$ and $\overline{\Inv}(\overline{C}_1,\overline{C}_2)$ for the number of inversions between two fixed columns, and the number of tied inversions between their complements, respectively.
\end{defn}

\begin{lem}\label{lem:complement-sums}
  Let $C_1,C_2$ be two columns of big letters with $C_1$ left of $C_2$, with fixed heights $h_1$ and $h_2$, and let $\overline{C}_1,\overline{C}_2$ be their complements.   Then $\Inv(C_1,C_2)+\overline{\Inv}(\overline{C}_1,\overline{C}_2)$ is independent of the fillings $C_1$ and $C_2$ and only depends on the heights $h_1$ and $h_2$ and the relative vertical positioning of the columns.  (See Figure \ref{fig:complement-pairs}.)
\end{lem}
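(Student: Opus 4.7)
The plan is a local-move invariance argument. Fix the geometry (positions and heights $h_1, h_2$ of both columns with their complements). Each valid filling is determined by two subsets $S_i \subseteq \{1, \ldots, n-k\}$ with $|S_i| = h_i$ giving the values in $C_i$, since $\overline{C}_i$ is then forced. Call an \emph{adjacent swap} in column $i$ the move $S_i \to (S_i \setminus \{v\}) \cup \{v+1\}$ for some $v \in S_i$ with $v+1 \notin S_i$. Adjacent swaps in either column connect any two fillings of the same heights, so it suffices to show that $T := \Inv(C_1, C_2) + \overline{\Inv}(\overline{C}_1, \overline{C}_2)$ is invariant under each such swap; by symmetry we may assume the swap is performed in column 1.

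Consider the swap exchanging $v \in C_1$ with $v+1 \in \overline{C}_1$. Because $v$ and $v+1$ are consecutive integers with no other values strictly between them, their positions in the full column simply interchange: writing $P$ for the old position of $v$ (so $P \in B_1$) and $Q$ for the old position of $v+1$ (so $Q \in \overline{B}_1$), after the swap $P$ holds $v+1$, $Q$ holds $v$, and all other entries in column 1 are unchanged. Hence $T$ can change only at attacking pairs $(p_1, p_2)$ with $p_1 \in \{P, Q\}$ and $\sigma_2(p_2) \in \{v, v+1\}$, since for any other value $x$ the Boolean $[v < x]$ agrees with $[v+1 < x]$ (and likewise for the $\leq$ variants). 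Unwinding the definitions of $\Inv$ and $\overline{\Inv}$ and the four attacking-pair types, $\Delta T$ reduces to a signed sum of four indicators: a $-1$ for a same-row attacking pair $(P,\cdot)$ at which $\sigma_2 = v+1$ lies in $B_2$; a $+1$ for an offset attacking pair $(P,\cdot)$ at which $\sigma_2 = v$ lies in $B_2$; a $+1$ for a same-row attacking pair $(Q,\cdot)$ at which $\sigma_2 = v$ lies in $\overline{B}_2$; and a $-1$ for an offset attacking pair $(Q,\cdot)$ at which $\sigma_2 = v+1$ lies in $\overline{B}_2$.

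The argument is completed by splitting into four cases according to whether each of $v, v+1$ lies in $C_2$ or $\overline{C}_2$. In each case, exactly two of the four indicators can be nonzero and they carry opposite signs. The key observation is that in each column, because $\{1, \ldots, v+1\}$ is partitioned between $C_i$ and $\overline{C}_i$, the ranks of $v$ and $v+1$ satisfy the arithmetic identity $a + b = v+1$, where $a$ is the rank of $v+1$ from the bottom of $C_i$ (if $v+1 \in C_i$) or from the top of $\overline{C}_i$ (otherwise), and $b$ is the analogous quantity for $v$. Translating ranks to rows via the big/complement placement, this identity (applied in both columns) forces the two row-matching conditions coming from the two active indicators in each case to reduce to a single common linear equation in the vertical offset $R_1^B - R_2^B$ between the two columns. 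Hence the two active indicators fire simultaneously and cancel, giving $\Delta T = 0$ in every case. The main obstacle is the four-way case analysis and careful rank-to-row bookkeeping, but once the identity $a + b = v+1$ is in place the cancellation is uniform across all cases and the invariance of $T$ follows.
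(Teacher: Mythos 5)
Your proposal is correct and follows essentially the same route as the paper's proof: reduce to adjacent swaps $v \mapsto v+1$ within a single column, observe that only attacking pairs involving the two affected positions and the values $v, v+1$ in the other column can change, and use the fact that $v$ in $C_i$ and $v+1$ in $\overline{C}_i$ sit exactly $v-1$ boxes apart (your rank identity $a+b=v+1$) to force the surviving gain and loss to fire together and cancel. The paper organizes the final cancellation into six explicit cases on the neighboring entries, but the underlying mechanism is identical to yours.
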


\begin{figure}
    \centering
          \includegraphics{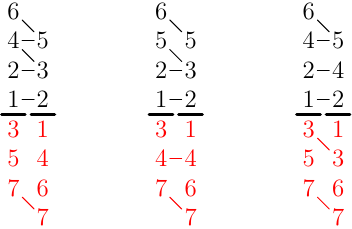}
          \caption{The three pairs of columns shown each have a total of six inversions, when counting $\Inv(C_1,C_2)+\overline{\Inv}(\overline{C}_1,\overline{C}_2)$.}
    \label{fig:complement-pairs}
\end{figure}

\begin{proof}
    Let $b_1,b_2$ be the heights of $C_1,C_2$.  Then each of $C_1,C_2$ is the increasing ordering of some $b_1$-element subset (respectively $b_2$-element subset) of $\{1,2,\ldots,n-k\}$.

    Note that any $b$-element subset can be obtained by starting with $\{1,2,\ldots,b\}$ and performing moves that replace some $i$ in the subset with $i+1$ where $i+1$ is not in the subset.  This means that it suffices to show that such a replacement applied to $C_1$ or $C_2$, forming $C_1',C_2'$ and their complements $\overline{C}_1',\overline{C}_2'$, does not change the total, i.e.\ $\Inv(C_1,C_2)+\overline{\Inv}(\overline{C}_1,\overline{C}_2)=\Inv(C_1',C_2')+\overline{\Inv}(\overline{C}_1',\overline{C}_2')$.

    Let $i$ be in $C_1$ such that $i+1$ is not in $C_1$.  Then when we replace $i$ with $i+1$ in $C_1$, essentially we are switching $i$ in $C_1$ with $i+1$ in $\overline{C}_1$ to form $C_1',\overline{C}_1'$ (and $C_2,\overline{C}_2$ stay the same); all other letters remain in their original positions.  The only inversions that can change are between $i$ in $C_1$ and the entry $x$ in $C_2$ in the same row (if it exists) and $y$ in $C_2$ in the next row below (if it exists).  Similarly, let $z$ and $w$ be the entries in $\overline{C}_2$ in the same row and next row below $i+1$ in $\overline{C}_1$, if they exist.  
    \begin{center}
    \begin{tikzpicture}
    \draw (0,2) node {$i$};
    \draw (1,2) node {$x$};
    \draw (1,1.5) node {$y$};
    \draw (0,1) node {$\vdots$};
    \draw (1,1) node {$\vdots$};
    \draw (0,0) node {\textcolor{red}{$i+1$}};
    \draw (1,0) node {\textcolor{red}{$z$}};
    \draw (1,-0.5) node {\textcolor{red}{$w$}};
    \end{tikzpicture}
    \end{center}
    Notice that there are precisely $i-1$ boxes between the $i$ in $C_1$ and the $i+1$ in $\overline{C}_1$, since by construction those boxes are labeled with $1,2,\ldots,i-1$ in some order.  Thus there are $i-1$ boxes between $x$ and $z$, and between $y$ and $w$.  

    \textbf{Case 1.}  Suppose $x\neq i+1$, $y\neq i$, $z\neq i$, $w\neq i+1$ (this includes the possibility that one or more of these letters does not exist).  Then changing the $i$ in $C_1$ to $i+1$ does not change $\Inv(C_1,C_2)$, and changing the $i+1$ in $\overline{C}_1$ to $i$ does not change $\overline{\Inv}(\overline{C}_1,\overline{C}_2)$.

    \textbf{Case 2.} If $x=i+1$ and $y=i$ both hold, then $z$ and $w$ are not equal to $i$ or $i+1$ (or do not exist) since $\overline{C}_2$ is the complement of $C_2$.  Notice that changing the $i$ to $i+1$ in $C_1$ removes the inversion between the $i$ and $x$, and adds one between the new $i+1$ and $y$, so the total number of diagonal inversions between $C_1,C_2$ remains unchanged, and the total $\overline{\Inv}$ between $\overline{C}_1,\overline{C}_2$ is also unchanged since $z,w$ are not $i$ or $i+1$.  

    \textbf{Case 3.} If $x=i+1$ and $y\neq i$, then $i\not\in C_2$ so $i\in \overline{C}_2$.  Since $x,z$ are $i-1$ spaces apart it follows that $z=i$, and $w\neq i+1$.  Then we see that changing the $i$ in $C_1$ to $i+1$ removes one inversion between $C_1,C_2$ (with $x$) and the change of the $i+1$ in $\overline{C}_1$ to $i$ adds one tied inversion (with $z$) between $\overline{C}_1,\overline{C}_2$.  Thus the total remains unchanged.

    \textbf{Case 4.} If $x\neq i+1$ and $y=i$, now $i+1\not\in C_2$ and by similar reasoning to above we have $w=i+1$ and $z\neq i$, and changing the $i$ to $i+1$ in $C_1$ adds one inversion between $C_1,C_2$ and removes one between $\overline{C}_1,\overline{C}_2$.

    \textbf{Case 5.} If $z=i$ and $w=i+1$, then $x,y$ are not equal to $i,i+1$ (or do not exist), and a similar check to Case 2 shows both Inv totals are unchanged.

    \textbf{Case 6.}  If either $z=i$ and $w\neq i+1$ or $z\neq i$ and $w=i+1$, we are in Case 3 or 4 above by similar reasoning.

    Finally, we need to consider an $i$ in $C_2$ changing to an $i+1$, and compare it to the elements $u,v$ in $C_1$ in the row above and the same row respectively, and compare the $i+1$ in $\overline{C}_2$ changing to an $i$ to the elements $r,s$ in $\overline{C}_1$ that can form an inversion with it.  An exactly analogous casework argument completes the proof in this case as well.
\end{proof}

In light of Lemma~\ref{lem:complement-sums}, in order to find a sign-reversing involution that cancels pairs with same $\Inv$ statistic, we may instead find a sign-reversing involution that cancels pairs with the same $\overline{\Inv}$ statistic on the complement.

We now analyze the possible contents of the complementary diagrams.

\begin{defn}
\label{def: admissible}
   We say a tuple $\alpha=(\alpha_1,\ldots,\alpha_{n-k})$ of nonnegative integers between $0$ and $n-k$ inclusive is an \textbf{allowable content} if it satisfies the following condition: $\alpha_1>0$, and for any positive entry $t=\alpha_i>0$ in $\alpha$ there follows a run of exactly $t-1$ zeroes before the next positive entry.
\end{defn}

 An alternative way of thinking about an allowable content is that for any $i$, the entry $\alpha_i$ preceding any maximal continuous run $\alpha_{i+1}=\cdots = \alpha_{i+t}=0$ of $t$ zeros is equal to $t+1$.

 For an allowable content $\alpha$, we write
 \begin{equation}
 \label{eq: sign alpha}
 \sgn(\alpha)=\sum_{\alpha_i>0}(\alpha_i-1)=\#
\{i:\alpha_i=0\}.
\end{equation}

\begin{example}
    The allowable contents $\alpha$ for $n-k=5$ are:
    \begin{align*}
      (1,1,1,1,1),\qquad (1,1,1,2,0),\qquad(1,1,2,0,1), \qquad(1,2,0,1,1), \\
    (2,0,1,1,1),\qquad(1,2,0,2,0),\qquad (2,0,1,2,0),\qquad(2,0,2,0,1), \\    (1,1,3,0,0),\qquad(1,3,0,0,1),\qquad(3,0,0,1,1),\qquad(2,0,3,0,0),\\
(3,0,0,2,0),\qquad(1,4,0,0,0),\qquad(4,0,0,0,1),\qquad(5,0,0,0,0).
    \end{align*}  Notice that there are $2^{n-k-1}$ allowable contents in general, formed by choosing where the $0$'s are among the entries after $\alpha_1$.
\end{example}

\begin{lem}\label{lem: allowable-contents}
A term $h_{\talpha}$ appears in the determinant 
 $$\det \begin{pmatrix}
 h_{k-1} & h_k & 0 & \ldots & 0\\
h_{k-2} & h_{k-1} & h_k & \ldots & 0\\
\vdots &  & \ddots & & \\
 h_{2k-n} & h_{2k-n+1} & & & h_{k-1}
    \end{pmatrix}$$
    with nonzero coefficient if and only if the complementary term $h_\alpha$ appears in  the determinant
\begin{equation}
\label{eq: dual det}
\det\begin{pmatrix}
      h_{1} & h_{0} & 0 & \cdots & 0\\
h_{2} & h_{1} & h_{0} & \cdots & 0\\
% h_3 & h_2 & h_1 & \cdots & 0 \\
\vdots &  & \ddots & & \\
 h_{n-k} & h_{n-k-1} & & & h_{1}
\end{pmatrix}=\sum_{\alpha\text{ allowable}} (-1)^{\sgn(\alpha)}h_\alpha
\end{equation}
with nonzero coefficient. Here we order the subscripts $\alpha_i$ as they appear in the columns of the matrix from left to right, and we treat the entries $1$ as $h_0$.  For instance, $\alpha$ can be $(2,0,1,1,1,\ldots,1)$. 

The tuples $\alpha$ that appear in this way are precisely the allowable contents for $n-k$, and the sign of $h_\alpha$ is $(-1)^{\sgn(\alpha)}$.
\end{lem}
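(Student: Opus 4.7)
The plan is to expand both determinants via the Leibniz formula, match their terms via the complementation $\talpha_j+\alpha_j=k$, and then give an explicit bijection between the surviving permutations and the allowable contents.

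First, I expand each determinant as a signed sum over $\sigma\in S_{n-k}$ and read the subscripts column by column, as specified in the statement. In the first matrix the $(i,j)$-entry is $h_{k-1-i+j}$ (zero outside the range $[0,k]$), so the column-read monomial for a given $\sigma$ is $h_\talpha$ with $\talpha_j=k-1-\sigma^{-1}(j)+j$; in the second matrix the $(i,j)$-entry is $h_{i-j+1}$ (zero for negative subscripts), yielding $h_\alpha$ with $\alpha_j=1+\sigma^{-1}(j)-j$. These satisfy $\talpha_j+\alpha_j=k$ termwise, so the ``if and only if'' holds via the complementation map. Moreover the surviving permutations in both expansions coincide: they are exactly the $\sigma$ with $\sigma(i)\le i+1$ for all $i$, or equivalently $\tau:=\sigma^{-1}$ satisfying $\tau(j)\ge j-1$ for all $j$.

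Next, I classify the surviving permutations by a block decomposition. The claim is that every such $\tau$ splits uniquely into disjoint cyclic blocks on consecutive intervals: if $\tau(1)=t$, then the constraint $\tau(j)\ge j-1$ forces $\tau(j)=j-1$ for $2\le j\le t$, since each value $v<t$ must lie at some position $\le v+1$, and an induction on $v$ shows the positions $2,3,\ldots,t$ get filled in order. After this first block of length $t$, the permutation $\tau$ restricts to an admissible permutation on $\{t+1,\ldots,n-k\}$, and iterating yields a composition $(t_1,\ldots,t_r)$ of $n-k$ into positive parts. In particular, distinct admissible $\tau$ produce distinct column-read compositions $\alpha$, so no cancellation occurs and each $h_\alpha$ we see genuinely appears with nonzero coefficient.

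Third, I identify the image of the bijection with allowable contents and verify signs. Under the block decomposition, $\alpha=(1+\tau(j)-j)_j$ has value $t_i$ at the start of the $i$-th block and zeros at the other $t_i-1$ positions in that block; this is precisely the ``positive entry $t$ followed by $t-1$ zeros'' form of an allowable content, and conversely every allowable content comes from a unique admissible $\tau$. Furthermore, each length-$t_i$ block is a $t_i$-cycle contributing $(-1)^{t_i-1}$ to $\sgn(\sigma)$, so $\sgn(\sigma)=(-1)^{(n-k)-r}$, while $\sgn(\alpha)=\sum_{\alpha_j>0}(\alpha_j-1)=\sum_i(t_i-1)=(n-k)-r$, so the two signs agree.

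The main obstacle is the block decomposition itself: once one shows that the constraint $\tau(j)\ge j-1$ rigidly propagates and fixes the positions of the values $1,\ldots,t-1$ the moment $\tau(1)=t$ is chosen, the rest is bookkeeping. Combining this with the complementation argument gives simultaneously the term-wise correspondence between the two determinants and the explicit evaluation of the second as $\sum_{\alpha\text{ allowable}}(-1)^{\sgn(\alpha)}h_\alpha$.
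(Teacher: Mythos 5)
Your proof is correct and follows essentially the same route as the paper's: a Leibniz expansion of the determinants, the observation that the non\-vanishing permutations decompose into cycles on consecutive intervals (you phrase this via $\tau=\sigma^{-1}$ and its blocks, the paper via $\sigma$ directly, finding the first row whose selected entry is not $h_0$ and closing the cycle there), and the resulting identification of the surviving terms with allowable contents together with the sign count $(-1)^{\sum_i(t_i-1)}=(-1)^{\sgn(\alpha)}$. Your explicit complementation formula $\talpha_j+\alpha_j=k$ and the no-cancellation remark spell out what the paper dispatches in one line, but the argument is the same.
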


\begin{proof}
The first statement is clear from the fact that each entry $h_i$ in the first determinant corresponds to entry $h_{k-i}$ in the second determinant (in particular, $h_{k}$ corresponds to $h_0=1$), so we analyze the determinant \eqref{eq: dual det}.  We expand this determinant using the Leibniz formula that says that for an $(n-k)\times (n-k)$ matrix $M$, $$\det(M)=\sum_{\sigma\in S_{n-k}} \sgn(\sigma) \prod_i M_{i,\sigma(i)}.$$

For our matrix, the only nonvanishing terms in the formula above will correspond to permutations whose entries avoid the $0$'s in \eqref{eq: dual det}. For a given such $\sigma$, consider the smallest value $\alpha_1$ for $M_{\alpha_1,\sigma(\alpha_1)}\neq 1$.  Then in particular $\sigma(1)=2$, $\sigma(2)=3$, and so on up to $\sigma(\alpha_1-1)=\alpha_1$, and since $\sigma$ is a permutation with $\sigma(\alpha_1)\neq \alpha_1+1$, we must have $\sigma(\alpha_1)=1$.  Thus $\sigma$ starts with a cycle $(1\,2\,\cdots \,\alpha_1)$ in cycle notation, and has selected the entries in the top left corner of the form:
$$\begin{pmatrix}
    & h_0 &  & &  \\
    &   & h_0 &  &  \\
     &   &  & \ddots &  \\
    &  & & & h_0 \\
    h_{\alpha_1} & & & &
\end{pmatrix}$$
resulting in a factor $(-1)^{\alpha_1-1}h_{\alpha_1}h_0^{\alpha_1-1}$.  Continuing inductively, we see that $\sigma$ is a product of cycles of consecutive elements, each contributing a factor of the form $(-1)^{t-1}h_{t}h_0^{t-1}$ for some $t$. Thus, the resulting term of subscripts on the product of $h_i$'s precisely corresponds to the definition of an allowable sequence.
\end{proof}

\begin{defn}
    The \textbf{reading word} of the complementary filling $\overline{P}$ of a filling $P$ of the big boxes is formed by reading the entries of the top row from right to left, then the second-to-top row from right to left, and so on.  (See Figure \ref{fig:sign-reversing-PF}.)
\end{defn}

\begin{defn}
A \textbf{tied inversion} in a word $w$ is a pair of numbers $i,j$ in $w$ with $i\ge j$ and $i$ to the left of $j$ in $w$.
\end{defn}
The following is clear by the definition of the reading word and diagonal inversions.

\begin{lem}\label{lem:Complement invs}
      If $u,v$ contribute to $\overline{\Inv}$ in a complementary tableau $\overline{P}$, then they form a tied inversion in the reading word as well.  Thus, any operation on $\overline{P}$ that changes an $i+1$ to an $i$, or vice versa, and preserves all tied inversion pairs (and non-inversion pairs) in its reading word also preserves all $\overline{\Inv}$ pairs in $\overline{P}$ itself. 
      
      In particular, such an operation retains the property that the columns of $\overline{P}$ are increasing top to bottom, since these pairs do not form an inversion in the reading word.
\end{lem}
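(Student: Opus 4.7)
The plan is to handle the three assertions of the lemma separately, using a case analysis that tracks the precedence of two cells in the reading word against the geometry of attacking positions.

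For the first assertion, I would split on the two configurations that produce an $\overline{\Inv}$ pair. In the same-row case, with $x$ strictly to the left of $y$ and $x\le y$, the reading word scans the common row from right to left, so $y$ appears before $x$; since $y\ge x$, the pair $(y,x)$ is a tied inversion in the reading word. In the adjacent-row case, with $y$ above and $x$ below (and $x$ strictly to the right of $y$), $y$ is in a strictly higher row, hence read earlier; again $y\ge x$ gives a tied inversion. So every $\overline{\Inv}$ pair contributes a tied inversion in the reading word.

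For the second assertion, the key point is a \emph{partial converse} at attacking positions. The case analysis from the first part also shows that whenever two fixed positions are in attacking configuration, the reading-word order of the two cells is determined by geometry alone, and that order is precisely what makes ``tied inversion in the reading word'' equivalent to ``$\overline{\Inv}$ pair'' at those positions. Therefore, restricting the bijection ``position pairs $\leftrightarrow$ tied-inversion status in the reading word'' to attacking position pairs identifies it with the bijection ``position pairs $\leftrightarrow$ $\overline{\Inv}$ status''. Since an operation that changes some $i\leftrightarrow i+1$ only alters entry values, not positions, and by hypothesis preserves tied-inversion status at every pair of positions, it in particular preserves it at every attacking pair of positions, which is exactly preservation of $\overline{\Inv}$ pairs.

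For the column statement, I would observe that two cells in the same column of $\overline{P}$, with the upper entry $a$ and lower entry $b$, satisfy $a<b$ by construction; and since the upper entry is in a strictly higher row, it is read earlier. Strict inequality then means this pair is \emph{not} a tied inversion in the reading word. An operation preserving tied-inversion pairs automatically preserves the complementary collection of non-inversion pairs, so after the operation the same positions still contain entries with the upper one strictly less than the lower one, keeping columns increasing top to bottom.

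The only real subtlety is keeping the reading-word orientation straight: because rows are read right-to-left while the attacking configurations are described left-to-right, ``the earlier entry in the reading word'' is not ``the left entry in the tableau,'' and one must verify in each case that the entry playing the role of $y$ in the inequality $x\le y$ is the one encountered first. Once that matching is made explicit, all three parts follow immediately.
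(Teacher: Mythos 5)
Your proposal is correct and is exactly the verification the paper has in mind: the paper omits the proof entirely (declaring the lemma ``clear by the definition of the reading word and diagonal inversions''), and your case analysis --- checking that in both attacking configurations the higher/righter cell is read first, so that the condition $x\le y$ defining an $\overline{\Inv}$ pair coincides with the tied-inversion condition at those positions, and that same-column pairs are non-inversions in the reading word --- is precisely that omitted unwinding of definitions. No gaps.
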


It follows that to complete Step 3 of the proof of Theorem~\ref{thm: perping big main}, it suffices to construct a sign-reversing involution that lowers or raises one letter on the set of all possible reading words with allowable contents, since such an involution will induce an involution on the possible complementary parking functions $\overline{P}$ for any given shape of $\overline{P}$. Our sign-reversing involution will leave one fixed point, the word $123\cdots (n-k)$, which is always the reading word of a unique way of filling any diagram, namely in reading order.  It will also preserve tied inversions, hence canceling all desired terms of the sum. 
This involution then automatically leads to an involution on the $\underline{b}$-parking functions $P$ on a fixed Dyck path $D$ that preserves the number of tied inversions of the complement $\overline{P}$ by Lemma \ref{lem:complement-sums}.

\begin{defn}
\label{def: sign reversing}
    We define a sign-reversing involution $\varphi$ on the set of words $w$ of length $n-k$ with allowable contents, as follows.  

    Let $i$ be the largest entry in $w$ such that (a) there is only one $i$, and (b) if $j$ is the largest letter in $w$ that is less than $i$, then $i$ is to the left of every $j$. 
    Let $m$ be the largest repeated letter in $w$.  Note that $i\neq m$.

    \textbf{Case 1.}  If $i>m$ or $m$ does not exist, replace $i$ with $j$.

    \textbf{Case 2.} If $i<m$ or $i$ does not exist, let $t$ be the smallest letter larger than $m$ in $w$ (or $t=n-k+1$ if $m$ is the largest letter in $w$).  Then replace the first $m$ with $t-1$.

    \textbf{Case 3.} If neither $m$ nor $i$ exist, do nothing.

    The resulting word is $\varphi(w)$.
\end{defn}

\begin{example}
Consider the case $n-k=4$. The determinant we are considering is $$\det \begin{pmatrix}
    h_1 & h_0 & 0 & 0 \\
    h_2 & h_1 & h_0 & 0 \\
    h_3 & h_2 & h_1 & h_0 \\
    h_4 & h_3 & h_2 & h_1
\end{pmatrix}
=h_{1111}-h_{2011}-h_{1201}-h_{1120}+h_{3001}+h_{1300}+h_{2020}-h_{4000}$$
and the pairings of the corresponding words under $\varphi$ are shown in Figure \ref{fig:pairings}.

\begin{figure}
\begin{center}
\begin{tabular}{c c c @{\hspace{0.5cm}}|@{\hspace{0.5cm}} c c c @{\hspace{0.5cm}}|@{\hspace{0.5cm}} c c c}
1243 & $\leftrightarrow$ & 1233 & 3124 & $\leftrightarrow$ & 2124 & 4221 & $\leftrightarrow$ & 2221 \\
1324 & $\leftrightarrow$ &1224 & 3142 &$\leftrightarrow$ & 2142 & 4212 &$\leftrightarrow$ & 2212 \\
1342 & $\leftrightarrow$ &1242 & 3214 &$\leftrightarrow$ & 2214 & 4122 &$\leftrightarrow$ & 2122\\
1423 & $\leftrightarrow$ &1323 & 3241 &$\leftrightarrow$ & 2241 & 1422 &$\leftrightarrow$ & 1222 \\
1432 & $\leftrightarrow$ &1332 & 3412 &$\leftrightarrow$ & 2412 & & & \\
2134 & $\leftrightarrow$ &1134 & 3421 &$\leftrightarrow$ & 2421 & 1143 &$\leftrightarrow$ & 1133 \\
2143 & $\leftrightarrow$ &2133 & 4123 &$\leftrightarrow$ & 3123 & 1413 &$\leftrightarrow$ & 1313 \\
2314 & $\leftrightarrow$ &1314 & 4132 &$\leftrightarrow$ & 3132 & 4113 &$\leftrightarrow$ & 3113 \\
2341 &$\leftrightarrow$ & 1341 & 4213 &$\leftrightarrow$ & 3213 & 1431 &$\leftrightarrow$ & 1331 \\
2413 &$\leftrightarrow$ & 2313 & 4231 &$\leftrightarrow$ & 3231 & 4131 &$\leftrightarrow$ & 3131 \\
2431 &$\leftrightarrow$ & 2331 & 4312 &$\leftrightarrow$ & 3312 & 4311 &$\leftrightarrow$ & 3311 \\ 
     &&   & 4321 &$\leftrightarrow$ & 3321 & & & \\
 4111 &$\leftrightarrow$ & 1111  &&&& 2411 &$\leftrightarrow$ & 1411 \\
    &&   &&&& 2141 &$\leftrightarrow$ & 1141 \\
    && &&&& 2114 &$\leftrightarrow$ & 1114
\end{tabular}
 \end{center}
 \caption{\label{fig:pairings} The pairings of words of length $4$ under the sign-reversing involution.  Not shown: the unique fixed point $1234$ maps to itself.}
 \end{figure}
 \end{example}

 \begin{thm}
 \label{thm: sign reversing}
   The map $\varphi$ (on complement reading words) is a sign-reversing involution on words with allowable content, has exactly one fixed point, and preserves tied inversions. The unique fixed point is the word $12\ldots(n-k)$, which has positive sign.
 \end{thm}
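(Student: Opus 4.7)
The plan is to verify six properties of $\varphi$: that it is well-defined (produces a word with allowable content), that the only fixed point is $12\cdots(n-k)$, that this fixed point has positive sign, that $\varphi$ reverses sign off the fixed set, that $\varphi$ is an involution, and that it preserves tied inversions. The fixed-point sign is immediate, since $(1,1,\ldots,1)$ contains zero zeros. To see that $12\cdots(n-k)$ is fixed, note that no letter repeats (so $m$ does not exist) and for each $i>1$ the letter $i-1$ (which plays the role of $j$) lies strictly to the left of $i$, so condition (b) fails for every candidate; hence Case 3 applies. For uniqueness, if $w\neq 12\cdots(n-k)$ then either $w$ has a repeat (so $m$ exists) or $w$ is a non-identity permutation of $\{1,\ldots,n-k\}$, in which case some $i>1$ lies left of $i-1$ and Case 1 applies.

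The key structural fact underlying everything is that allowability of $\alpha$ forces: whenever $\alpha_x=c>0$, the next nonzero entry of $\alpha$ occurs at position $x+c$. Combined with the definitions of $i,j$ (resp.\ $m,t$), this yields $i=j+\alpha_j$ in Case 1 and $t=m+\alpha_m$ in Case 2. I would use this to verify well-definedness: in Case 1, $\alpha'_j=\alpha_j+1$ and $\alpha'_i=0$, and the zero-run after position $j$ now extends through position $i$, giving $\alpha_j=\alpha'_j-1$ zeros, exactly as required; Case 2 is symmetric. As a corollary, Case 1 creates a single new zero while Case 2 destroys one, so $(-1)^{\sgn(\alpha)}$ flips in each case. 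Note also that $t-1>m$ in Case 2, since $\alpha_m\geq 2$ forces $t\geq m+2$.

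For the involution property, the plan is to show that Case 1 and Case 2 are mutually inverse. Suppose $w$ is in Case 1 with parameters $(i,j)$ and $w'=\varphi(w)$. The old position of $i$ now carries a $j$ and, by (b), lies strictly to the left of every previous $j$, so it is the leftmost $j$ in $w'$. Because no letter of $w$ lies strictly between $j$ and $i$, the same holds in $w'$; this makes the largest repeated letter of $w'$ equal to $m'=j$. Allowability forces $\alpha_{i+1}>0$ when $i<n-k$, so the smallest letter of $w'$ exceeding $j$ is $i+1$ (or $n-k+1$ when $i=n-k$); either way $t'-1=i$, so Case 2 applied to $w'$ recovers $w$. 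The reverse direction is analogous: after Case 2 the inserted value $t-1$ becomes the largest unique letter satisfying (b) in $w'$, using that $t\in w$ (so no letter above $m$ newly acquires property (b)) and that the structural lemma pins down the position of the inserted $t-1$ relative to the copies of $m$.

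The hardest step is preservation of tied inversions, where the bookkeeping is most delicate. In Case 1, tied inversions not involving the changed position $p$ are trivially unaffected, so the analysis reduces to pairs $(p,q)$. Any letter $l\neq i,j$ satisfies $l<j$ or $l>i$ (since no letter of $w$ lies in the open interval $(j,i)$), so $l$ compares to $j$ and to $i$ in the same direction and the contribution at $(p,q)$ is unchanged. For the $n_j$ copies of $j$, all of which lie to the right of $p$ by (b), the $n_j$ tied inversions between the $i$ at $p$ and each $j$ become $n_j$ tied inversions between the new $j$ at $p$ and each old $j$, preserving the count. Case 2 requires the analogous casework but must track the interpolated value $t-1$ between $m$ and $t$; the decisive facts are $m<t-1<t$, $t-1\notin w$ before the move, that the changed $m$ was the leftmost one, and that no letter of $w$ lies in $(m,t)$ by the structural lemma. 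Together these six verifications establish the theorem.
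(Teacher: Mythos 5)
Your proposal is correct and follows essentially the same route as the paper: the same three-case analysis, the same verification that Cases 1 and 2 are mutually inverse, the same content bookkeeping for well-definedness and sign, and the same positional argument (no letter of $w$ lies strictly between $j$ and $i$, resp.\ between $m$ and $t$) for preservation of tied inversions. Your explicit structural lemma $i=j+\alpha_j$ and $t=m+\alpha_m$ is a clean formulation of what the paper uses implicitly, but it does not change the substance of the argument.
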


 \begin{proof}
     We first show that $\varphi$ sends a word of allowable content to another word of allowable content.  If it changes $i$ to $j$ as in Case 1, then the content tuple $\alpha$ had $\alpha_i=1$, and then by the definition of allowable content, we must have $(\alpha_j,\ldots,\alpha_i,\alpha_{i+1})=(i-j,0,0,\ldots,0,1,a)$ where $a>0$.  Changing the $i$ to $j$ results in this substring of the content changing to $(i-j+1,0,0,\ldots,0,0,a)$, and so the content is still allowable.  

     Otherwise, if $\varphi$ changes the leftmost $m$ in $w$, and $t$ is the smallest letter larger than $m$, then we have $\alpha_m=t-m$ and a content subsequence $(\alpha_m,\alpha_{m+1},\ldots,\alpha_t)=(t-m,0,0,\ldots,0,0,c)$ where $c$ is the multiplicity of $t$.  When we change an $m$ to $t-1$, the content subsequence changes to $(t-m-1,0,0,\ldots,0,1,c)$, and the content is still allowable.  

 We now show $\varphi$ is an involution. Let $v= \varphi(w)$, and let $i,j,m$ be the corresponding values for $w$ in Definition~\ref{def: sign reversing}, and similarly let $i',j',m'$ be the values for $v$. 

 Before we proceed, observe that $m < t-1$ in Case 2  (so in particular Case 2 does not result in a fixed point) since $m<t-1$ even when $t=n-k+1$, by the definition of allowable content.  
 
 First, suppose that Case 1 applies to $w$, so that $i>m$ and $v$ is formed by changing $i$ to $j$ in $w$.  Then $j$ is a repeated letter in $v$, with the first $j$ being where the $i$ was, and $j>i>m$, so $j$ is now the largest repeated letter in $v$. Thus, $m'=j$.  Since we changed $i$, there is no singleton larger than $i$ satisfying the condition for $i'$ in $v$, so either $i'<j=m'$ or $i'$ does not exist.  Either way, Case 2 applies to $v$.  Since $w$'s content was allowable and $i$ was a singleton, there is an $i+1$ in $w$ (and hence in $v$), and so the smallest letter $t'$ larger than $m'=j$ in $v$ is now $t'=i+1$ (since there are no $i$'s remaining in $v$). Thus, the first appearance of $m'=j$ in $v$ changes back to $i=t-1$ when we apply $\varphi$, and we have $\varphi(\varphi(w))=\varphi(v)=w$.

 Second, suppose Case 2 applies to $w$ so that $v$ is formed by changing the first $m$ to $t-1$, where $t$ is the smallest entry in $w$ larger than $m$ (or otherwise $t=n-k+1$).  We claim that $i'=t-1$, $j'=m$, and $i'>m'$ (or $m'$ does not exist).    In particular, there is only one $t-1$ in $v$ since $t-1$ did not exist in $w$ by the definition of allowable content.  Furthermore, $t-1>m>i$ since we are in Case 2, and so $t-1$ is a larger singleton than $i$ in $v$.  The largest entry smaller than $t-1$ is $m$ by definition, and since we changed the leftmost $m$, we have the $t-1$ is a singleton in $v$ that is to the left of all the copies of the entry just smaller than it (namely, $m$).  Thus $i'=t-1$ and $j'=m$.  Since $i'=t-1>m$, then $i'>m'$ as well (or $m'$ does not exist), so Case 1 applies and $\varphi$ changes $i'=t-1$ back to $m$.  Thus $\varphi$ is an involution.

 We now show $\varphi$ changes sign.   
 Indeed, by \eqref{eq: sign alpha} the sign $\sgn(\alpha)$ equals the number of zeros in $\alpha$.
 Since $\varphi$ changes the number of $0$'s in the content by $\pm 1$, it is a sign-reversing involution.
The unique fixed point is the unique word satisfying Case 3, which must be $w=12\ldots (n-k)$. This fixed point $w$ has content $\alpha=(1,\ldots,1)$ with no zeros, and hence has positive sign.

 Finally, we show $\varphi$ preserves tied inversions.  Since we are always changing a letter to an adjacent-sized letter in order in the word, only the inversions between those two letters can change.  If we are in Case 1 and an $i$ changes to a $j$, then since the $i$ was left of every $j$, it formed an inversion with every $j$ and the new $j$ still forms those inversions since we count tied inversions.   In Case 2, an $m$ changes to a $t-1$. Since we are changing the leftmost $m$, it already forms a tied inversion with the $m$'s to its right, and then so does $t-1$ since $t-1>m$.  
 \end{proof}

\begin{figure}
    \begin{center}
        \includegraphics{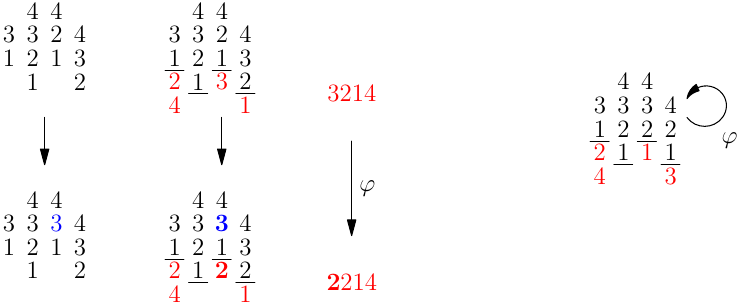}
    \end{center}
    \caption{\label{fig:sign-reversing-PF}At top left, a $\underline{b}$-parking function $P$ written in shrunken notation with $n-k=4$.  From left to right at top we find the complement $\overline{P}$ in red, and the reading word of $\overline{P}$, and then apply $\varphi$  and reinterpret on the parking function to obtain $\varphi(P)$.  At right, the unique fixed point $P$ for this shape.}
\end{figure}

 \begin{cor}
     The map $\varphi$ extends to a sign-reversing involution on the set 
     \[
     \bigsqcup_{\alpha\text{ allowable}} P(D,\underline{b},\widetilde{\alpha})
     \]
     that preserves the statistic $\mathrm{tdinv}_{\mathrm{big}}$ and has a unique fixed point $P_{D,\underline{b}}^0$. Consequently, the right-hand side of \eqref{eq: Jacobi Trudi expanded} is a single term $q^{\tdinv_{\mathrm{big}}(P_{D,\underline{b}}^0)}$.
 \end{cor}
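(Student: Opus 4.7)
The plan is to transfer the sign-reversing involution $\varphi$ constructed on words in Theorem~\ref{thm: sign reversing} to the set of $\underline{b}$-parking functions by going through the complement filling $\overline{P}$ and its reading word. Concretely, given $P \in P(D,\underline{b},\widetilde{\alpha})$ for some allowable $\alpha$, I would form the shrunken diagram, take its column-complement $\overline{P}$ (which has content $\alpha$ by the complementarity lemma), read off the reading word $w(\overline{P})$, apply $\varphi$ to obtain a new word $\varphi(w(\overline{P}))$, place the resulting letters back into the same shape in reading order to obtain $\overline{\varphi(P)}$, and then take the column complement again to recover $\varphi(P)$.

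Next I would check that this procedure is well-defined, i.e.\ that $\overline{\varphi(P)}$ really is a valid filling (columns strictly increasing top to bottom). Since $\varphi$ changes at most one letter in the word by $\pm 1$, and by Lemma~\ref{lem:Complement invs} such a change preserves all tied inversions of the reading word, the resulting filling still has increasing columns in $\overline{\varphi(P)}$, and hence its complement $\varphi(P)$ has strictly increasing columns as well. Because the new content is still allowable (Theorem~\ref{thm: sign reversing}), the image lies in another piece $P(D,\underline{b},\widetilde{\alpha}')$ of the disjoint union, and $\varphi$ is an involution on the whole union because it is one on reading words.

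To check that $\mathrm{tdinv}_{\mathrm{big}}$ is preserved, I would combine three observations already in place: Proposition~\ref{prop: shrink} identifies $\mathrm{tdinv}_{\mathrm{big}}(P)$ with $\Inv(\mathrm{shrink}(P))$; Lemma~\ref{lem:complement-sums} says that $\Inv(C_1,C_2)+\overline{\Inv}(\overline{C}_1,\overline{C}_2)$ depends only on the column heights and relative positions, so it suffices to show $\overline{\Inv}(\overline{P})$ is preserved; and Lemma~\ref{lem:Complement invs} says that preservation of tied inversions in the reading word implies preservation of $\overline{\Inv}$ in $\overline{P}$. Since $\varphi$ on words preserves tied inversions by Theorem~\ref{thm: sign reversing}, the statistic $\mathrm{tdinv}_{\mathrm{big}}$ is preserved on parking functions. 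The fixed point is now forced: the unique fixed word is $12\cdots(n-k)$, whose corresponding filling $\overline{P}$ is the unique reading-order filling of the complement shape, and so $P_{D,\underline{b}}^0$ is the unique parking function whose complement is filled in reading order with content $(1,1,\ldots,1)$.

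The main conceptual obstacle is simply verifying the compatibility of all these identifications at once, namely that the word-level involution genuinely transports back through the reading-word and complementation operations without losing column-strictness or breaking the bijection between allowable contents $\alpha$ and terms of the Jacobi--Trudi expansion. Once that bookkeeping is in place, signs cancel in pairs by Theorem~\ref{thm: sign reversing} in the sum \eqref{eq: Jacobi Trudi expanded}, leaving a single surviving term $q^{\mathrm{tdinv}_{\mathrm{big}}(P_{D,\underline{b}}^0)}$, which is the claimed form; the evaluation of this exponent as $c_{D,\underline{b}}$ is then deferred to Theorem~\ref{thm: shrunken fixed point} in the next subsection.
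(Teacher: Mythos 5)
Your proposal is correct and follows essentially the same route as the paper: transport $\varphi$ through complementation and the reading word, use Lemma~\ref{lem:Complement invs} and Theorem~\ref{thm: sign reversing} for well-definedness and preservation of tied inversions, and Lemma~\ref{lem:complement-sums} together with Proposition~\ref{prop: shrink} to convert preservation of $\overline{\Inv}$ on complements into preservation of $\tdinv_{\bg}$. The paper's own proof is just a terser version of this same argument, likewise deferring the evaluation of the exponent to Theorem~\ref{thm: shrunken fixed point}.
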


 \begin{proof}
     As Figure~\ref{fig:sign-reversing-PF} illustrates, the induced involution on the set of $\underline{b}$-parking functions on Dyck path $D$ is defined as follows: Given $P$ such a parking function, first find the complement $\overline{P}$ and the reading word $w$ of $\overline{P}$. Then define $\overline{P'}$ to be the unique complement parking function with reading word $\varphi(w)$ on Dyck path $D$. Then let $P$ map to $P'$, the complement of $\overline{P'}$. The desired properties of this map follow by Lemma~\ref{lem:Complement invs} and Theorem~\ref{thm: sign reversing}.

     Since there is a unique fixed point for the complement parking function, namely the filling that has reading word $12\cdots(n-k)$, there is a unique fixed point parking function that we call $P_{D,\underline{b}}^0$.
 \end{proof}

\subsection{Final coefficient}
\label{sec: coefficient}

Let us recall Equation \eqref{eq: Jacobi Trudi expanded}, 
\begin{equation*}
\langle s_{(k-1)^{n-k}}, f_{D,\underline{b}}[X;q]\rangle=\sum_{\alpha\ \mathrm{allowable}}\sum_{P\in P(D,\underline{b},\talpha)}(-1)^{\sgn(\talpha)}q^{\tdinv_{\bg}(P)}%=q^{\tdinv_{\mathrm{big}}(P_{D,\underline{b}}^0)}.
\end{equation*}

Since the sign-reversing involution $\varphi$ cancels all terms in the right-hand sum except the one corresponding to the unique fixed point $P^0_{D,\underline{b}}$, the sum simplifies to  $q^{\tdinv_{\mathrm{big}}(P_{D,\underline{b}}^0)}$.
To complete the proof of Theorem~\ref{thm: perping big main}, we now only need to show that the exponent $\tdinv_{\mathrm{big}}(P_{D,\underline{b}}^0)$ is correct, which we showed in Equation \eqref{eq: fixed-point equality} amounts to proving that
$$\tdinv_\bg(P^0_{D,\underline{b}})=\maxtdinv(D)-\pathdinv(D)-d(\underline{s},\underline{b}).$$

We make a few simplifications to this formula before proving it.  First, if we fill all of the small boxes (which are unfilled in $P^0_{D,\underline{b}}$) with the letter $0$ to form a full parking function $P'$, then $\tdinv_\bg(P^0_{D,\underline{b}})+d(\underline{s},\underline{b})=\tdinv(P')$.  Thus the equation we wish to prove becomes
\begin{equation}
\label{eq:tdinvs}    \tdinv(P')+\pathdinv(D)=\maxtdinv(D),
\end{equation}
which we rephrase and prove in Theorem \ref{thm: shrunken fixed point}.  We first reinterpret each of these statistics on the corresponding shrunken diagrams. 

\begin{defn}
    A \textbf{complete shrunken diagram} (see Figure \ref{fig:genshrunken}) with parameters $n,k$ is a partial labeling of a grid with $k$ columns, and row heights indexed $1,2,3,\ldots$ from bottom to top, such that:
    \begin{itemize}
        \item Each column $C_i$ contains labeled and unlabeled boxes, and the labeled boxes form a finite interval from height $h_i$ to $h_i'$ inclusive with each $h_i'\ge n-k+1$.   The labels have values in $\{0,1,\ldots,n-k\}$, weakly increase from bottom to top, and only the label $0$ may repeat in a column.
        \item We have $h_1=1$ and $h_k'=n-k+1$.  %That is, it starts on the bottom row and the last column ends at height $n-k+1$.
        \item For any $i$, we have $h_{i+1}=h_i'-(n-k)$.  
    \end{itemize}
\end{defn}

\begin{defn}
    The \textbf{complement} of a column $C_i$ in a generalized shrunken diagram is the set $\overline{C}_i$ of letters in $\{1,2,\ldots,n-k\}$ that do not appear in $C_i$, and we draw the letters of each $\overline{C}_i$ in red underneath the nonzero letters of $C_i$, increasing downwards as in Figure \ref{fig:sign-reversing-PF}.  The complement of a generalized shrunken diagram is the union of the columns $\overline{C}_i$.
\end{defn}

\begin{defn}
     We say that a complete shrunken diagram is a \textbf{generalized shrunken fixed point} if the reading word of its complement is strictly increasing.  In particular, all the labels in the complement are distinct.
\end{defn}

It is not hard to see that if the complement reading word is $123\cdots(n-k)$, then we have the result of shrinking one of the fixed points of the sign-reversing involution.  We also write $\mathrm{shrink}(P)$ to denote the shrunken drawing of a parking function $P$, as in the case of $\underline{b}$-parking functions.

\begin{rmk}
In the notations of Section \ref{sec: stacks}, we have
\begin{equation}
\label{eq: hi shrink}
h_i(\mathrm{shrink}(P))=1+(s_1+b_1)+\ldots+(s_{i-1}+b_{i-1})-i(n-k+1)
\end{equation}
and $$h'_i(\mathrm{shrink}(P))=1+(s_1+b_1)+\ldots+(s_{i}+b_{i})-i(n-k+1).$$
A column $C_i$ has $s_i$ zeros and $b_i$ non-zero entries.
\end{rmk}

\begin{figure}
    \centering
 \includegraphics{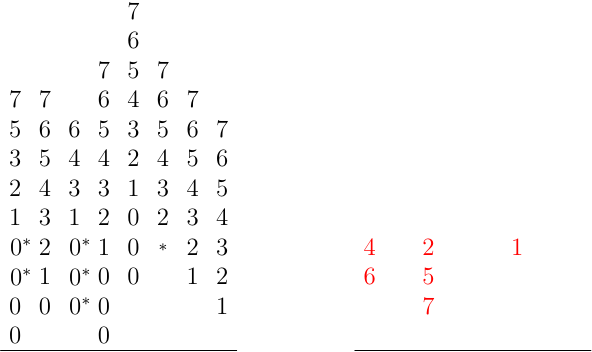} 
    \caption{A generalized shrunken fixed point for $k=8$ and $n=15$ (so $n-k=7$).
    The complement is shown at right, and its reading word is $124567$, which is strictly increasing.  Their corresponding positions are shown at left with $\ast$ symbols.}    \label{fig:genshrunken}
\end{figure}

We now redefine the statistics $\maxtdinv$, $\tdinv$, and $\pathdinv$ for the generalized shrunken fixed points.  Note that \textbf{maxtdinv} simply becomes the number of attacking pairs among labeled boxes, defined as pairs $x,y$ such that either $x$ is left of $y$ in the same row, or $y$ is in the row above $x$ and to the left of $x$.  Then, \textbf{tdinv} is the number of such pairs for which $x<y$.  

\begin{defn}
    We define \textbf{maxInv} of a shrunken diagram to be the maximum number of attacking pairs among labeled entries in the sense of Definition \ref{def:inv}.  We also define its \textbf{Inv} to be the number of such pairs $(x,y)$ that form inversions, that is, $x<y$. 
\end{defn}

It remains to reinterpret $\pathdinv$, as in the following lemma.

\begin{defn}
        The \textbf{pathdinv} of a complete shrunken diagram is the number of pairs of boxes $x,y$ in the same row, with $x$ left of $y$, such that $y$ contains a label (possibly $0$), and $x$ is at most $n-k+1$ steps below the top entry of its column.  (Note that box $x$ does not have to contain a label; it may be an empty box.)
\end{defn}

\begin{lem}
\label{lem: pathdinv}
  We have $\pathdinv(\mathrm{shrink}(P))=\pathdinv(D)$ where $D$ is the path of $P$.
\end{lem}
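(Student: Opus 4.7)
My plan is to decompose both sides of the identity by pairs of columns $(i,j)$ with $i<j$ and show that the count of shrunken pathdinv pairs across columns $i$ and $j$ equals the count of boxes above $D$ in column $i$ with arm exactly $j-i-1$ contributing to $\pathdinv(D)$; summing then finishes the proof. Set $m = n-k+1$ and, for $1\le c\le k$, let $v_c$ be the number of vertical steps of $D$ in column $c$ and $V_c = v_1+\cdots+v_c$, so that $V_k=K=km$. The shrinking operation shifts column $i$ down by $(i-1)m$, so an original row $r$ in column $i$ becomes shrunken row $r-(i-1)m$, and in particular the top labeled row of shrunken column $i$ sits at $h_i' = V_i-(i-1)m$.

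First I will translate the shrunken $\pathdinv$ conditions on a pair $(x,y)$ with $x$ in column $i$, $y$ in column $j>i$, both in the same shrunken row. The three constraints (``$y$ labeled'', ``$x$ at most $m$ steps below the top of column $i$'', and same shrunken row) become, in original row coordinates, $r_y\in[V_{j-1}+1, V_j]$, $r_x\in[V_i-m, V_i]$, and $r_x=r_y-(j-i)m$. Combining these, the valid pairs at columns $(i,j)$ are in bijection with integers
$$r_y \in \bigl[\max(V_{j-1}+1,\; V_i + m(j-i-1)),\; \min(V_j,\; V_i + m(j-i))\bigr].$$

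Next I will derive the same interval from the definition of $\pathdinv(D)$. For a box $b=(c,r)$ above $D$ one has $\leg(b)=r-V_c-1$, and since $V_{c'}$ is weakly increasing in $c'$, $\arm(b)=a$ iff $r\in(V_{c+a}, V_{c+a+1}]$. Writing $k/K=1/m$, the inequality $\arm(b)/(\leg(b)+1)\le k/K<(\arm(b)+1)/\leg(b)$ simplifies to $r\in[V_c+ma,\, V_c+m(a+1)]$. Intersecting with the arm range yields
$$r \in \bigl[\max(V_{c+a}+1,\; V_c + ma),\; \min(V_{c+a+1},\; V_c + m(a+1))\bigr],$$
which under the identification $(c,a)=(i,\,j-i-1)$ is precisely the interval above. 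Summing the equal counts over all $(i,j)$ with $1\le i<j\le k$ (equivalently over $(c,a)$ with $0\le a\le k-c-1$, a restriction automatic because $V_k=K$ forbids $a=k-c$) gives $\pathdinv(D)=\pathdinv(\mathrm{shrink}(P))$.

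The main obstacle will be careful coordinate bookkeeping: getting the translation ``at most $m$ steps below the top of its column'' into the correct original-row window $[V_i-m, V_i]$, and recognizing the identification $a=j-i-1$ that pairs boxes above $D$ with column-pairs of the shrunken diagram. Once those are in hand, the two intervals match mechanically and the summation is routine.
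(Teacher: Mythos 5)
Your proof is correct and follows essentially the same route as the paper's: a box above $D$ with arm $a$ in column $c$ is matched to a pair of columns $(i,j)=(c,c+a+1)$ in the shrunken diagram, and the leg/slope window $\ell\in[ma-1,ma+m-1]$ becomes the ``top $m+1$ boxes'' condition after the relative downward shift of $(j-i)m$. Your version just carries out the coordinate bookkeeping more explicitly (and correctly — note the paper's formula for $h_i(\mathrm{shrink}(P))$ in the preceding remark has an off-by-one that your $V_i-(i-1)m$ avoids), pinning down the identical intervals on both sides rather than arguing qualitatively.
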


\begin{proof}
  By Definition \ref{def:arm-leg}, a box $B$ that counts towards the $\pathdinv$ statistic of $D$ is a box above the path such that if $a$ is the arm of $B$ and $\ell$ is the leg of $B$ we have 
  \begin{equation}
  \ell\in \{as-1,as,as+1,as+2,\ldots,as+(s-1)\} \label{eq:legs}
  \end{equation}
  where $s=n-k+1$ is the slope of the parking function.

  Consider the column $C_1$ that $B$ is in, and the unique column $C_2$ of $D$ containing a label $y$ in the same row as $B$.  We also call $C_1,C_2$ the corresponding columns in $\mathrm{shrink}(P)$, and identify $y$ in $\mathrm{shrink}(P)$ as well.  
  
  Since we moved column $C_2$ down $(a+1)s$ steps vertically (relative to $C_1$) to make $\mathrm{shrink}(P)$, it follows that the new distance above the top of $C_1$ is $\ell-(a+1)s$, which by \eqref{eq:legs} is one of
  $$-(s+1),-s,-s+1,\ldots,-1.$$  Thus, the box $x$ in $C_1$ in $\mathrm{shrink}(P)$ that corresponds to $B$ in $D$ is one of the top $s+1$ boxes in column $C_1$ (possibly empty). 
\end{proof}

Putting together all of the above with Equation \ref{eq:tdinvs}, our goal is to show the theorem below.  We will use the following definition throughout.

\begin{defn}
    We call a column $C_i$ of a complete shrunken diagram {\bf basic}  if $h_i=1,h'_i=n-k+1$ and the labels in $C_i$ are exactly $0,1,\ldots,n-k$ from bottom to top. Note that the complement of a basic column is empty. 
\end{defn}

\begin{thm}
\label{thm: shrunken fixed point}
    For any generalized shrunken fixed point $P$, we have \begin{equation}
\Inv(P)+\pathdinv(P)=\maxInv(P) \label{eqn:equality}
    \end{equation}
\end{thm}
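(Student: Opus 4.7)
The plan is to use the fixed-point condition to eliminate label dependence from $\Inv$, reducing the theorem to a shape-only identity, and then establish that identity by induction.

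First, I would show that the fixed-point condition forces $\overline{\Inv}(\overline{P})=0$. For any attacking pair $(x,y)$ in $\overline{P}$, the box $y$ lies either to the right of $x$ in the same row or in the row above $x$, so $y$ is read strictly before $x$ in the reading word (top-to-bottom, right-to-left). Since the reading word is strictly increasing, the label of $y$ is strictly less than the label of $x$, and in particular the pair violates the weak inequality $x\le y$ that defines a tied inversion. Combined with Lemma \ref{lem:complement-sums}, this forces the inversion count between the big-label parts of any two columns to equal a shape-determined quantity.

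Next, I would decompose $\Inv(P)$ and $\maxInv(P)$ according to whether each label in a pair is a zero (small) or a big (positive) entry. Two zeros never invert, a zero on the left with a big on the right always does, and a big on the left with a zero on the right never does. Combined with the previous step, the rearrangement $\maxInv(P) - \Inv(P) = \pathdinv(P)$ becomes a sum over column pairs in which every term depends only on the diagram's shape parameters $(h_i, h_i', b_i)$; since $\pathdinv$ is also purely shape-dependent, the theorem reduces to a shape identity.

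I would then prove the shape identity by induction on the number $m$ of complement labels. The base case $m=0$ has every column of the form $0^{s_i}\,1\,2\cdots(n-k)$, and a direct row-by-row computation verifies the identity. For the inductive step, I would locate the smallest complement label in its column $i^*$ and promote it to a big label in $C_{i^*}$, producing a generalized shrunken fixed point $P'$ with $m-1$ complement labels. Applying the inductive hypothesis to $P'$ and tracking the changes under the promotion would complete the argument.

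The main obstacle is verifying the inductive step. Because $\pathdinv$ and $\maxInv$ depend only on which boxes are labeled, both are invariant under the promotion, so the delicate point is controlling $\Inv(P) - \Inv(P')$ and the change in the Lemma \ref{lem:complement-sums} constants as $b_{i^*}$ increases by one. The bookkeeping is nontrivial because the promotion can rearrange the positions of existing big labels in $C_{i^*}$ when the promoted label is not smaller than all current big labels, cascading into the attacking-pair counts. Boundary cases, most notably $s_{i^*}=0$ where no zero is available to convert, require a separate analysis. An alternative strategy is to construct a direct bijection between non-inversion attacking pairs in $P$ and pathdinv pairs, exploiting the fixed-point structure to resolve the row-level mismatches that arise in small examples; this would trade the column-by-column bookkeeping for a more delicate combinatorial correspondence.
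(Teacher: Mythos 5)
Your opening reduction is correct and is a genuinely nice reframing that the paper leaves implicit: since every $\overline{\Inv}$ pair of $\overline{P}$ is a tied inversion of the reading word (Lemma \ref{lem:Complement invs}), a strictly increasing reading word forces $\overline{\Inv}(\overline{P})=0$, so by Lemma \ref{lem:complement-sums} every big--big inversion count between two columns is determined by the shape, and the zero--big inversions are purely positional. The theorem really does collapse to a shape identity. The promotion move in your induction is also essentially the paper's own reduction (remove the highest $0$ in a column, bump down the labels below the missing letter, insert that letter), and it does preserve the fixed-point property, $\maxInv$, and $\pathdinv$ exactly as you say.

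However, what remains after your reduction is precisely where all the work lives, and you have not supplied it. First, the inductive step --- that $\Inv(P)=\Inv(P')$ under the promotion --- is the paper's Lemma \ref{lem:reduction-1}, whose proof is the technical heart of the theorem: one shows that the ``$0$-gains'' and ``$m$-losses'' alternate in the left-to-right, bottom-to-top reading of the columns, and this alternation argument uses the fixed-point condition in an essential, non-obvious way. Naming the obstacle is not the same as resolving it. Second, your choice of column (the one containing the globally smallest complement label) genuinely breaks in the $s_{i^*}=0$ case you flag: there is then no zero to convert, and extending the column violates the height constraints $h_{i+1}=h_i'-(n-k)$. The paper avoids this by always operating on the \emph{leftmost non-basic} column $C_i$, for which it proves $h_i=1$, whence $s_i+b_i\ge n-k+1>b_i$ guarantees a zero; your induction needs this (or an equivalent) fix to be well-defined. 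Third, your base case $m=0$ is not a single diagram but a whole family (every column contains $1,\ldots,n-k$, with $k$ zeros distributed subject to the height constraints); verifying the identity there is the content of the paper's Lemma \ref{lem:reduction-2} iterated down to the all-basic diagram of Lemma \ref{lem: base case}, and the offsets between columns make the ``row-by-row computation'' a real argument rather than a check. In short: the skeleton matches the paper's reduction scheme, and your shape-identity observation would streamline the exposition, but the two lemmas that constitute the actual proof are still missing.
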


\begin{proof}
We prove the statement by changing $P$ in a series of reduction steps. At each step, we prove that the new diagram $P$ is again a generalized shrunken fixed point, and the quantity $\maxInv -\pathdinv-\Inv$ does not change.

As the base case, let $P_0$ be the unique generalized shrunken fixed point in which all columns are basic. Then  the complement of $P_0$ is empty, and $P_0$ is indeed a generalized shrunken fixed point. By Lemma \ref{lem: base case} below, \eqref{eqn:equality} holds for $P_0$.

Now let $P\neq P_0$, and let $C=C_i$ be the leftmost non-basic column. We claim that $h_i=1$. Indeed,  if $i=1$ then $h_i=1$ by assumption. If $i>1$ then $C_{i-1}$ is basic, so $h'_{i-1}=n-k+1$ and $h_i=1$.  Moreover, $h'_i\ge n-k+1$ by the definition of a shrunken diagram, and so $C$'s labels start at the bottom of the diagram, and $C$ contains at least one $0$. Since $C$ is not basic, either it does not contain all of $1,\ldots,n-k$, or it contains $1,\dots, n-k$ and more than one $0$.  We consider these two cases separately:

\textbf{Case 1.} Suppose $C$ does not contain all of $1,2,\ldots,n-k$, and let $m$ be the smallest positive integer that does not appear in $C$.  Let $P'$ be obtained from $P$ by removing the highest $0$ in column $C$ (which exists because $h'_i\ge n-k+1$), bumping down $1,2,\ldots,m-1$ each by one row, and inserting $m$ in the row that used to contain $m-1$ in column $C$ (see  Figure \ref{fig:m-bump}). Note that this step does not change $h_i$ or $h'_i$, so $P'$ is a complete shrunken diagram.

Let us prove that $P'$ is a generalized shrunken fixed point. Indeed, in the complement of $C$, we delete $m$ and do not shift any other labels. Therefore, the reading word for the complement of $P'$ is obtained from the one for $P$ by deleting $m$ and is strictly increasing. 

Finally, by Lemma \ref{lem:reduction-1} below, the quantity $\maxInv -\pathdinv-\Inv$ does not change.

\textbf{Case 2.} Suppose $C=C_i$ contains all labels $1,2,\ldots,n-k$ and more than one $0$.
In particular, $h'_i>n-k+1$ and $C$ is not the rightmost column (since $h_k'=n-k+1$). Also, $h_{i+1}>1$.

Let $P'$ be obtained from $P$ by removing the highest $0$ in column $C$, bumping down $1,2,\ldots,n-k$ each by one row in $C$, then placing an additional $0$ at the bottom of the column  $C_{i+1}$ to the right of $C$ (without shifting the rest of $C_{i+1})$. This decreases both $h'_i$ and $h_{i+1}$ by 1, so the equation $h'_{i}-(n-k)=h_{i+1}$ still holds and $P'$ is a complete shrunken diagram. See Figure \ref{fig:lower-C}.

The complement of $C$ is empty both in $P$ and $P'$, and the complement of $C_{i+1}$ is unchanged and not shifted. So the complements of $P$ and $P'$ are the same, and $P'$ is a generalized shrunken fixed point.

Finally, by Lemma \ref{lem:reduction-2}
below, the quantity $\maxInv -\pathdinv-\Inv$ does not change.
\end{proof}

\begin{lem}
\label{lem: base case}
  Define $P_0$ to be the unique generalized shrunken fixed point that consists of $k$ basic columns.  Then \eqref{eqn:equality} holds for $P_0$.
\end{lem}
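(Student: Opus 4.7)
The plan is to explicitly describe the unique diagram $P_0$ and then compute each of the three statistics $\maxInv(P_0)$, $\Inv(P_0)$, and $\pathdinv(P_0)$ directly, exploiting the strong horizontal symmetry. Since every column of $P_0$ is basic, each column $C_i$ satisfies $h_i = 1$ and $h_i' = n-k+1$ and contains exactly the labels $0, 1, \ldots, n-k$ from bottom to top. Consequently the labeled region is the $(n-k+1)\times k$ rectangle in rows $1$ through $n-k+1$, with every entry in row $r$ equal to the constant $r-1$ across all $k$ columns.

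Next I will count the attacking pairs of Definition \ref{def:inv} in two groups. Same-row attacking pairs contribute $(n-k+1)\binom{k}{2}$ (one per row and per ordered pair of columns $i<j$), while second-type pairs (with $y$ one row above $x$ and strictly to the left of $x$) contribute $(n-k)\binom{k}{2}$. This yields $\maxInv(P_0) = (2(n-k)+1)\binom{k}{2}$. For $\Inv(P_0)$, the same-row attacking pairs all have equal labels and so do not contribute, whereas every second-type pair has $x = r-1 < r = y$ and is therefore an inversion; thus $\Inv(P_0) = (n-k)\binom{k}{2}$. For $\pathdinv(P_0)$, since the top entry of every column sits in row $n-k+1$, the condition ``$x$ is at most $n-k+1$ steps below the top entry of its column'' is automatically satisfied for every row of the labeled rectangle; moreover every $y$ in such a row is labeled. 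Hence every triple (row, ordered column pair $i<j$) contributes, giving $\pathdinv(P_0) = (n-k+1)\binom{k}{2}$.

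Adding these,
\[
\Inv(P_0) + \pathdinv(P_0) = (n-k)\binom{k}{2} + (n-k+1)\binom{k}{2} = (2(n-k)+1)\binom{k}{2} = \maxInv(P_0),
\]
which is exactly \eqref{eqn:equality}. There is no real obstacle here: the computations are immediate from the uniform structure of $P_0$. The only minor verification is that any unlabeled boxes above row $n-k+1$ contribute nothing to any of the three statistics, since they are strictly above the top entry of their column (so the $\pathdinv$-window excludes them) and, being unlabeled, they are invisible to both $\Inv$ and $\maxInv$.
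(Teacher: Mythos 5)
Your proof is correct and is essentially the same as the paper's: the paper counts $n-k+1$ pathdinvs, $n-k$ inversions, and $2(n-k)+1$ maxInvs between each pair of columns and notes the equality holds pairwise, which is exactly your count divided by $\binom{k}{2}$. Your version just spells out the row-by-row bookkeeping that the paper leaves implicit.
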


\begin{proof}
   Note that any two columns have $n-k+1$ pathdinv's between them, $n-k$ (off-row) $\Inv$'s, and a total of $2n-2k+1$ $\maxInv$'s.  Thus the equality holds between each pair of columns, and hence holds globally for $P_0$.
\end{proof}

We now establish the first of two types of reduction steps.

\begin{figure}
    \centering
    \includegraphics{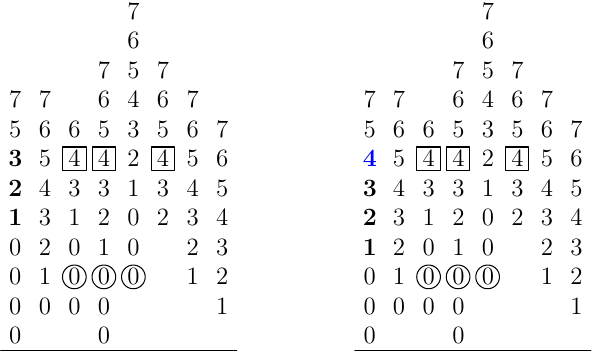}
    \caption{A parking function $P$ at left, and the parking function $P'$ at right formed by moving down the boldfaced elements $1,2,3$ in the first column and inserting a $4$ above them.  The $m$-losses and $0$-gains used in the proof of Lemma \ref{lem:reduction-1} are boxed and circled, respectively.  Notice that they alternate, when we read the columns from left to right, upwards within each column.
    \label{fig:m-bump}}
\end{figure}

\begin{lem}\label{lem:reduction-1}
    Let $P$ be a generalized shrunken fixed point, and let $C$ be the leftmost non-basic column of $P$.  Suppose $C$ does not contain all of $1,2,\ldots,n-k$, and let $m$ be the smallest positive integer that does not appear in $C$.  
    
    Let $P'$ be obtained from $P$ by removing the highest $0$ in column $C$, bumping down $1,2,\ldots,m-1$ each by one row, and inserting $m$ in the row that used to contain $m-1$ in column $C$.  Then $\Inv(P)=\Inv(P')$ (and $\pathdinv$ and $\maxInv$ remain unchanged as well).  
\end{lem}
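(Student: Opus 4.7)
The plan is to verify preservation of each of the three statistics separately, with $\Inv$ carrying most of the content. For $\maxInv$ and $\pathdinv$, the key observation is that the operation changes only column $C_i$ and preserves the set of labeled rows in $C_i$: before the operation those rows are $\{r, r+1, \ldots, h'_i\}$ carrying $0, 1, \ldots, m-1$ at the bottom and unchanged labels above, while after the operation the same rows carry $1, 2, \ldots, m$ at the bottom and the same labels above. Since $\maxInv$ counts attacking pairs between labeled cells irrespective of label values and $\pathdinv$ depends only on column heights and the set of labeled rows, both statistics are unaffected.

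For $\Inv$, I would work row by row. For each modified row $\rho = r + \ell$ with $\ell \in \{0, 1, \ldots, m-1\}$, the $C_i$-label increments from $\ell$ to $\ell + 1$, so only attacking pairs whose other cell contains $\ell$ or $\ell + 1$ can change status. The four configurations of attacking pair ($C_j$ same row or with a one-row offset, and $j$ smaller or larger than $i$) give the net change from row $\rho$ as
\[
\Delta_\ell = \#\{j<i : C_j(\rho)=\ell\} + \#\{j>i : C_j(\rho-1)=\ell\} - \#\{j>i : C_j(\rho)=\ell+1\} - \#\{j<i : C_j(\rho+1)=\ell+1\}.
\]
Summing over $\ell$, the contributions from $j < i$ cancel at each $\ell$ individually because such $C_j$ are basic ($C_j(\rho) = \rho - 1$) and both terms evaluate to $(i-1)\cdot[r=1]$. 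The contributions from $j > i$ telescope, as the loss term at level $\ell$ and the gain term at level $\ell + 1$ impose the same condition $C_j(r + \ell) = \ell + 1$. What survives is the boundary difference
\[
\Delta \Inv = \#\{j > i : C_j(r - 1) = 0\} - \#\{j > i : C_j(r + m - 1) = m\}.
\]

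The hard part, and where the generalized shrunken fixed point hypothesis enters crucially, is showing this boundary term vanishes. Since $P$'s complement reading word is strictly increasing, each label in $\{1, \ldots, n-k\}$ is missing from at most one column of $P$; in particular $m$, being the smallest positive label missing from $C_i$, must belong to every $C_j$ with $j > i$. My plan is to establish a per-column equivalence $C_j(r-1) = 0 \iff C_j(r+m-1) = m$ for each $j > i$ by expanding the position of $m$ in $C_j$ as $h_j + s_j + |C_j \cap \{1,\ldots,m\}| - 1$ and using the fixed point structure (which constrains $s_j$ and the cardinality of labels below $m$ missing from $C_j$) to match both conditions to the single equation $h_j + s_j = r + |M_j|$, where $M_j$ is the set of labels below $m$ missing from $C_j$. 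The alternating arrangement of $0$-gains and $m$-losses highlighted in Figure~\ref{fig:m-bump}, read columns left-to-right and upward within each, is the visual manifestation of this per-column matching; if the per-column identity proves too rigid to verify directly, the fallback is to build a bijection between the two sets out of this alternating reading order.
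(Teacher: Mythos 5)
Your treatment of $\maxInv$ and $\pathdinv$, and your telescoping computation reducing $\Delta\Inv$ to the boundary difference $\#\{j>i: C_j(r-1)=0\}-\#\{j>i: C_j(r+m-1)=m\}$, are correct and agree with the paper's reduction (the paper isolates the same two exceptional situations, which it calls ``$0$-gains'' and ``$m$-losses''). The gap is in the final step: the per-column equivalence $C_j(r-1)=0 \iff C_j(r+m-1)=m$ that your plan rests on is false. A counterexample with $n-k=3$, $k=4$: take $C_1=(0,1,2,3)$ in rows $1$--$4$, $C_2=C=(0,0,1,2)$ in rows $1$--$4$ (so $m=3$ and $r=2$), $C_3=(0,0,1,2,3)$ in rows $1$--$5$, and $C_4=(0,0,3)$ in rows $2$--$4$. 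The complement consists of $3$ in row $2$ of column $2$ and $1,2$ in rows $3,2$ of column $4$, with reading word $123$, so this is a valid generalized shrunken fixed point. Column $3$ has a $0$ in row $r-1=1$ but its $3$ sits in row $5\neq r+m-1=4$, while column $4$ has a $3$ in row $4$ but no labeled box in row $1$ at all. Indeed, your identity $h_j+s_j=r+|M_j|$ characterizes only the $m$-loss (an equality), whereas the $0$-gain is the interval condition $h_j\le r-1\le h_j+s_j-1$; the two need not coincide.

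What is actually true, and what the paper proves, is the weaker global statement that the $0$-gains and $m$-losses strictly alternate (beginning with a $0$-gain and ending with an $m$-loss) in the reading order that scans columns left to right and proceeds upward within each column; this forces the two counts to be equal without any column-by-column matching. Proving this alternation is the real content of the lemma: it uses the increasing-reading-word hypothesis non-locally, by propagating height constraints from column to column (a column whose $m$ lies in row $r+m-1$ cannot be too tall, so subsequent columns start low enough to contain a $0$ in row $r-1$ before the next $m$-loss can occur, and dually a column with a $0$-gain but no $m$-loss must be tall, pushing later columns up until an $m$-loss intervenes). Your ``fallback'' of extracting a bijection from the alternating reading order is exactly the right idea, but as written it is the entire unproved crux of the argument rather than a safety net.
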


\begin{proof}
Since the shape formed by the entries doesn't change, $\pathdinv$ and $\maxInv$ do not change from $P$ to $P'$. 

We now examine the changes in $\Inv$, which all involve boxes in column $C$.  First consider the $
\Inv$'s between column $C$ and boxes to its left; since the columns to the left are all filled with $0,1,2,\ldots,n-k$ and $C$ has height at least $n-k+1$, then for any entry $b$ in column $C$ of either $P$ or $P'$, the entries to its left in the same row are some entry $a\ge b$.  Thus, the $\Inv$'s to the \textit{left} with $C$ are precisely the off-row attacking pairs, and this does not change under switching to $P'$.

We now claim that, for any entry $y$ to the \textit{right} of column $C$, the number of $\Inv$'s involving $y$ and an entry of column $C$ remains unchanged except in the following two situations:

\begin{itemize}
    \item \textbf{$0$-gain:} $y$ is a $0$ in the row $r_0$ that contains the top $0$ in column $C$ of $P'$.
    \item \textbf{$m$-loss:} $y$ is an $m$ in the row $r_0+m$ that contains the new $m$ in column $C$ in $P'$.
\end{itemize}

Indeed, first suppose $y$ is in row $r_0$.  If $y=0$, we gain an off-row $\Inv$ between $y$ and the new position of the $1$ in column $C$ in $P'$, and we do not gain a horizontal $\Inv$.  If $y\neq 0$, it would not gain an off-row $\Inv$ and would retain its same-row $\Inv$.

Next suppose $y$ is in row $r_0+m$.  If $y=m$, we lose the horizontal $\Inv$ between the $m-1$ that was in row $r_0+m$ and $y$, and the off-row $\Inv$ value remains unchanged.  If $y\neq m$, both the horizontal and off-row $\Inv$ values are unchanged.

Finally, if $y$ is not in row $r_0$ or $r_0+m$, then either it is above row $r_0+m$ (in which case no $\Inv$'s change), below row $r_0$ (in which case no $\Inv$'s change), or between these two rows.  When it is between, the only time an $\Inv$ can change with $y$ is if $y=a$ where $a-1,a$ are the entries in its row and above it in column $C$ in $P$, and they change to $a,a+1$ in $P'$. Then $y$ loses a horizontal $\Inv$ with column $C$ and gains an off-row $\Inv$, and so its total $\Inv$ contribution remains unchanged.

We will now show the following two claims.  Consider the ordering on the letters of $P$ (and $P'$) formed by reading the columns from left to right, where we read bottom to top within each column.
\begin{enumerate}
    \item There is a $0$-gain between any two consecutive $m$-losses, and a $0$-gain before the first $m$-loss.
    \item There is an $m$-loss between any two consecutive $0$-gains, and an $m$-loss after the final $0$-gain.
\end{enumerate}

The two claims above will suffice, since then the $0$-gains and $m$-losses alternate and cancel with each other.  

To prove claim (1): let $C_1$ and $C_2$ be two consecutive columns containing $m$-losses (or, $C_1$ may be column $C$ itself, and $C_2$ the first column containing an $m$-loss).  Since the $m$ in $C_1$ is in row $r_0+m$, the highest possible height of column $C_1$ is if $m+1,\ldots,n-k$ all occur above the $m$, in which case its top entry has at most height $r_0+n-k$.  Thus the bottom of the next column is in or below row $r_0$.  If there is no $0$-gain between $C_1$ and $C_2$, the subsequent columns all remain sufficiently short that each next column's bottom entry is in or below row $r_0$.  Thus $C_2$'s bottom entry is in or below row $r_0$ as well, and since it has an $m$ in row $r_0+m$, the lowest row that can contain a nonzero entry in $C_2$ is row $r_0+1$ (if we put all of $1,2,\ldots,m-1$ below the $m$ in $C_2$).  Thus $C_2$ contains a $0$ in row $r_0$, so there is indeed a $0$-gain before the $m$-loss in $C_2$ (and after $C_1$).

To prove claim (2): let $C_1$ and $C_2$ now be two consecutive columns to the right of $C$ containing $0$-gains.  That is, each has a $0$ in row $r_0$.  Notice that since $m$ is in the complement of column $C$ in $P$, it is not in the complement of any other column, so in particular $m$ appears in all columns to the right of $C$.  

Consider the height of $m$ in column $C_1$.  If it is in row $r_0+m$, we have found an $m$-loss between the two $0$-gains and we are done.  Now suppose it occurs below row $r_0+m$.  Consider the letters from the complement $\overline{C}_1$ plus those in $C_1$ that are less than $m$.  Then these letters stretch at least as low as row $r_0$, but there is a zero in row $r_0$ so some letter $x<m$ from $\overline{C}_1$ must be in row $r_0$ in the complement.  Since $m$ occurs in row $r_0+1$ in the complement of $P$ in column $C$, this letter $x<m$ occurs after $m$ in reading order in the complement, a contradiction.  Thus the $m$ in $C_1$ occurs above row $r_0+m$.

We now show that this forces the height of column $C_1$ to reach at least row $r_0+(n-k)+1$.  To do so, it suffices to show that the letters $m+1,m+2,\ldots,n-k$ must all appear in $C_1$ above the $m$.  Since $m$ is strictly above row $r_0+m$, the letters $1,2,\ldots,m-1$ in both $C_1$ and the complement $\overline{C}_1$ occur strictly above row $r_0+1$ as well.  Thus if any letter among $m+1,m+2,\ldots,n-k$ is missing in $C_1$, then the smallest such, say $s$, is in $\overline{C}_1$ and occurs in or above row $r_0$ in the complement $\overline{P}$.  But then $s$ occurs before $m$ in reading order in $\overline{P}$ and $s>m$, a contradiction.  Hence $C_1$ reaches row height at least $r_0+(n-k)+1$.

Then, the column just to the right of $C_1$ has its bottom (possibly $0$) entry above row $r_0$, and the same analysis shows that if we do not have an $m$-loss in this column, the subsequent bottom entries will continue to lie above row $r_0$.  Thus if there is no $m$-loss between $C_1$ and $C_2$, the bottom entry of $C_2$ is also above $r_0$, a contradiction since it contains a $0$-gain.  This completes the proof.
\end{proof}

We now establish the second reduction step.

\begin{lem}
\label{lem:reduction-2}
    Let $P$ be a generalized shrunken fixed point, and let $C$ be the leftmost non-basic column of $P$.  Suppose $C$'s entries are $0,0,\ldots,0,1,2,\ldots,n-k$ from bottom to top, so in particular there is more than one $0$.
    
    Let $P'$ be obtained from $P$ by removing the highest $0$ in column $C$, bumping down $1,2,\ldots,n-k$ each by one row in $C$, then placing an additional $0$ at the bottom of the column to the right of $C$.  Then $$\maxInv(P)-\pathdinv(P)-\Inv(P)=\maxInv(P')-\pathdinv(P')-\Inv(P').$$
\end{lem}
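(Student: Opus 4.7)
The plan is to decompose the operation $P \to P'$ into transparent pieces and then compute the change in each of the three statistics $\maxInv$, $\pathdinv$, $\Inv$ via a case analysis by column pairs. First I would set $z = h_i' - (n-k)$, the row of the topmost zero of $C_i$ in $P$, and describe the operation $P \to P'$ equivalently as: (i) remove the box at $(i, h_i')$ (holding label $n-k$); (ii) for each row $r \in \{z, z+1, \ldots, z+n-k-1\}$, increment by one the label at $(i, r)$ (so $0 \mapsto 1$, $1 \mapsto 2$, $\ldots$, $n-k-1 \mapsto n-k$); (iii) place a new box labeled $0$ at $(i+1, z-1)$, the new bottom of $C_{i+1}$. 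Write $\Delta X := X(P') - X(P)$; the target is $\Delta \maxInv = \Delta \pathdinv + \Delta \Inv$.

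The key initial observation is that $\maxInv$ and $\pathdinv$ depend only on \emph{which} positions are labeled (and, for $\pathdinv$, which one plays the role of $y$), not on label values. Hence only $\Delta \Inv$ is sensitive to the label increments in (ii), while $\Delta \maxInv$ and $\Delta\pathdinv$ are determined entirely by the position changes (i) and (iii). In particular, the top of $C_i$ drops from row $h_i'$ to row $h_i' - 1$ by (i), and the bottom of $C_{i+1}$ drops from row $z$ to $z-1$ by (iii); all other column tops and bottoms are unchanged.

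Next, I would partition attacking pairs by the unordered pair of columns they span and verify $\Delta\maxInv = \Delta\pathdinv + \Delta\Inv$ for each column pair separately. Columns $C_j$ with $j < i$ are basic (labels $0, 1, \ldots, n-k$ in rows $1, \ldots, n-k+1$), and columns $C_j$ with $j > i+1$ are unchanged, so only column pairs involving $C_i$ or $C_{i+1}$ contribute. For a pair $(C_j, C_i)$ or $(C_j, C_{i+1})$ with $j \notin \{i, i+1\}$, the structure of $C_j$ is fixed and known, so the changes arising from (i), (ii), and (iii) can be enumerated directly: same-row and off-row pairs lost or gained due to (i) and (iii) give $\Delta\maxInv$; the label increments (ii) flip inversion status of certain attacking pairs with $C_j$, contributing to $\Delta\Inv$; and the downward shift of the top of $C_i$ (or the bottom of $C_{i+1}$) shifts the $\pathdinv$-band of $C_i$ (or changes which row is $y$-eligible in $C_{i+1}$) accordingly. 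A routine count shows these contributions balance on each such column pair.

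The main obstacle will be the column pair $(C_i, C_{i+1})$, where both columns change simultaneously. Here one must simultaneously account for the loss of $(i, h_i')$ (eliminating attacking pairs with $C_{i+1}$), the gain of $(i+1, z-1)$ (creating new attacking pairs with $C_i$ at rows $z-1$ and $z$), and the label increments in $C_i$ that alter inversion status relative to the unchanged labels of $C_{i+1}$. I expect the crucial inputs to be: that $C_i$ contains all labels $\{1, \ldots, n-k\}$ (so the increment map in (ii) acts as a bijection on the set of labels present in the affected rows), and that the new zero at $(i+1, z-1)$ sits one row below and immediately right of the deleted zero at $(i, z)$. A case split on the relationship between $h_{i+1}'$ and $h_i'$ should then complete the verification, and summing the contributions over all column pairs yields the lemma.
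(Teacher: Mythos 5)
Your reduction of $P\to P'$ to a deletion at the top of $C=C_i$, a sequence of unit label increments, and an insertion of a zero at the bottom of $C_{i+1}$ is correct, as is the observation that $\maxInv$ and $\pathdinv$ depend only on which boxes are present while the increments affect only $\Inv$. The gap is the central claim that the identity $\Delta\maxInv=\Delta\pathdinv+\Delta\Inv$ can be ``verified for each column pair separately'': this refined, pairwise statement is \emph{false}, so the ``routine count'' you defer to cannot come out balanced. A small counterexample: take $n-k=1$, $k=4$ (slope $2$), with $C_1$ holding $0,0,0,1$ in rows $1$--$4$, $C_2$ a single $0$ in row $3$, $C_3$ holding $0,1$ in rows $2$--$3$, and $C_4$ a single $1$ in row $2$; the complement reading word is the single letter $1$, so this is a generalized shrunken fixed point to which the lemma applies with $C=C_1$. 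For the column pair $(C_1,C_3)$ one computes $\Delta\maxInv=-1$ (the off-row attacking pair between row $3$ of $C_3$ and row $4$ of $C_1$ disappears), while $\Delta\pathdinv=0$ and $\Delta\Inv=0$ (the same-row inversion $0<1$ at row $3$ is lost, but a new off-row inversion between the $0$ at row $2$ of $C_3$ and the relocated $1$ at row $3$ of $C_1$ appears). For the pair $(C_2,C_3)$ one gets $\Delta\maxInv=+1$ with $\Delta\pathdinv=\Delta\Inv=0$. Only the sum over column pairs balances.

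The reason a columnwise argument cannot work is that the required cancellations genuinely cross column pairs, always between $(C_i,C_j)$ and $(C_{i+1},C_j)$ for columns $C_j$ to the right. In the paper's bookkeeping: the ``lower pathdinv gains'' (pairs joining $C_i$ to a box of $C_j$ in row $r-(n-k)-2$) are cancelled by the ``rightwards off-row maxInv gains'' of the new zero (pairs joining $C_{i+1}$ to the same boxes), and a new inversion in the pair $(C_i,C_j)$ created by a zero of $C_j$ in row $r-(n-k)-1$ is cancelled by a horizontal maxInv gain of the new zero in the pair $(C_{i+1},C_j)$ --- this is exactly what happens in the example above. So any correct proof must track, for each column $C_j$ to the right, the combined effect of the changes in $C_i$ and in $C_{i+1}$ simultaneously; the paper does this with a global row-by-row accounting of gains and losses rather than a pair-by-pair verification. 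Your plan, which isolates $(C_i,C_{i+1})$ as the only delicate pair and treats every pair involving exactly one of $C_i$, $C_{i+1}$ as independently balanced, fails at precisely this point and would need to be restructured around that transfer between pairs.
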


\begin{figure}
    \centering
    \includegraphics{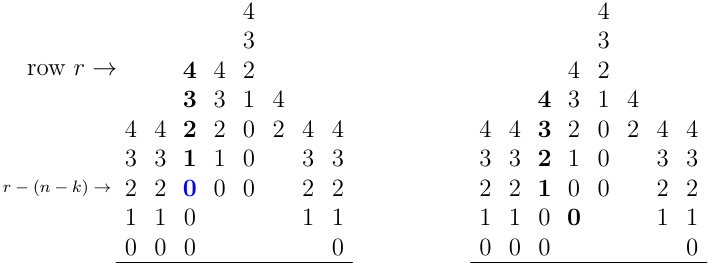}
    \caption{Lowering the column $C$ shown with boldface entries one step, to form  $P'$ at right.}
    \label{fig:lower-C}
\end{figure}

\begin{proof}
    Let $r$ be the row of the box in column $C$ of $P$ that is labeled by $n-k$.  Then the difference $\pathdinv(P')-\pathdinv(P)$ consists of the following changes:
    \begin{itemize}
        \item \textbf{Upper pathdinv losses:} There is one pathdinv lost for each entry in row $r$ to the right of column $C$ (from removing the top box from column $C$). 
        \item \textbf{Lower pathdinv gains:} There is one pathdinv gained for each entry in row $r-(n-k)-2$ to the right of column $C$ (since column $C$ lowered its height by $1$, so the $x$ in the pathdinv can now be one row lower in $C$ than before),  
        \item \textbf{Extra pathdinvs} with the new $0$ entry:  We gain one more pathdinv between the new $0$ entry to the right of column $C$ and every entry to its left.  (Note that every box to its left is filled with an entry that is close enough to the top of their columns to count towards pathdinv.) 
    \end{itemize}

    The difference $\maxInv(P')-\maxInv(P)$ consists of the following:
    \begin{itemize}
        \item \textbf{Upper maxInv losses, row $r$:}  There is one maxInv lost for each entry in row $r$ to the right of column $C$.
        \item \textbf{Upper maxInv losses, row $r-1$:} There is one maxInv lost for each entry in row $r-1$ to the right of column $C$.
  
        \item \textbf{Rightwards off-row maxInv gains} with the new $0$ entry:  We gain one more maxInv between the new $0$ entry to the right of column $C$ and every entry to its right in the row below it, row $r-(n-k)-2$.  
        \item \textbf{Leftwards off-row maxInv gains} with the new $0$ entry: We gain one more maxInv between the new $0$ entry and every entry to its left in the row $r-(n-k)$ above it.
        \item \textbf{Rightwards horizontal maxInv gains} with the new $0$ entry:  We gain one more maxInv between the new $0$ entry to the right of column $C$ and every entry to its right in row, $r-(n-k)+1$.
        
        \item \textbf{Leftwards horizontal maxInv gains} with the new $0$ entry:  We gain one more maxInv between the new $0$ entry to the right of column $C$ and every entry to its left in row, $r-(n-k)+1$.
        
    \end{itemize}

    Clearly the upper pathdinv losses cancel with the upper maxInv losses in row $r$, and the lower pathdinv gains cancel with the rightwards off-row maxInv gains.  The extra pathdinvs also cancel with the leftwards horizontal maxInv gains.  It follows that we only need to match the remaining changes in maxInv with the changes in $\Inv$, where the remaining maxInv changes are:
    \begin{itemize}
        \item \textbf{Upper maxInv losses, row $r-1$}.
        \item \textbf{Leftwards off-row maxInv gains} with the new $0$ entry.
        \item \textbf{Rightwards horizontal maxInv gains} with the new $0$ entry.
    \end{itemize}

    Notice that the leftwards off-row maxInv gains all involve boxes to the left filled with something greater than $0$, which are precisely the new $\Inv$'s formed to the left.   Thus we can now focus on the $\Inv$'s only involving boxes weakly to the right of column $C$ (not counting the new $\Inv$ between the new $0$ and the $1$ in column $C$, which we have already counted), and show that they cancel with:
    \begin{itemize}
        \item \textbf{Upper maxInv losses, row $r-1$}.
        \item \textbf{Rightwards horizontal maxInv gains} with the new $0$ entry.
    \end{itemize}

    Consider any numbered entry $y$ to the right of column $C$, occurring in some row other than row $r-1$ or row $r-(n-k)-1$.  We claim that the number of Inv's with $y$ as the right hand entry is unchanged.  Indeed, if $y$ is in row $r$, there were no $\Inv$'s formed with it from column $C$, because the entry $n-k$ in that row from column $C$ is as large as possible, and there are no entries above.  Thus if $y$ is in row $r$ or above, its $\Inv$ contribution does not change. 

    If $y$ is in a row lower than $r-(n-k)-1$, it also does not gain or lose $\Inv$'s because it only sees $0$'s to the left of it in column $C$, and the new $0$ in row $r-(n-k)-1$ in $P'$ does not form an $\Inv$ with $y$ since it is smaller than, and above, $y$.

    If $y$ is in a row between $r-(n-k)$ and $r-2$ inclusive, its $\Inv$'s do not change with any column besides column $C$ since those entries are not altered.  Comparing $y$ to column $C$, let $a-1$ and $a$ be the entries in column $C$ in $P$ that are in the same row, and in the row above, $y$ respectively.  Then in $P'$, these entries change to $a$ and $a+1$ respectively.  If $y\le a-1$, the upper entry forms an $\Inv$ and the lower does not in both $P$ and $P'$.  If $y\ge a+1$, the lower entry forms an $\Inv$ in both and the upper does not.  If $y=a$, it changes from being one horizontal $\Inv$ to one off-row $\Inv$.  In all cases, the $\Inv$ contribution of $y$ is unchanged.

    Now, consider an entry $y$ to the right of column $C$ in row $r-(n-k)-1$ in $P$ (where $y$ is also not the box that becomes the new $0$).  If $y=0$, then it did not form an $\Inv$ with column $C$ in $P$, but it does form an $\Inv$ with the new position of the $1$ in column $C$ in $P'$.  If $y>0$, it forms the same number of $\Inv$'s with column $C$ as before, but it forms a unique new $\Inv$ with the new $0$ in the column to the right of $C$.  Either way it contributes one new $\Inv$ in $P'$.  These extra $\Inv$'s cancel with the rightwards horizontal maxInv gains with the new $0$ entry.

    Finally, consider an entry $y$ to the right of column $C$ in row $r-1$.  If $y$ is less than $n-k$, then we lose an $\Inv$ from the $n-k$ in column $C$ moving down to row $r-1$.  If $y$ equals $n-k$, then we lose the $\Inv$ that we had from the $n-k-1$ in column $C$.  Either way we lose exactly one $\Inv$ for each such $y$, and so these cancel precisely with the upper maxInv losses in row $r-1$.  
\end{proof}

\subsection{Relation to LLT and Kazhdan-Lusztig polynomials}

In this subsection, we give a different interpretation of Theorem \ref{thm: perping big main} following \cite{HHLRU}.

Let $\underline{\mu}=(\mu^{(0)},\ldots,\mu^{(k-1)})$ be a $k$-tuple of Young diagrams, and let $\underline{c}=(c_0,\ldots,c_{k-1})$ be a sequence of integers called \emph{shifts}. Given a box $x\in \mu^{(i)}$, we define its \textbf{position content} $c(x)$ to be the number of squares to its left minus the number of squares below it.  The {\bf adjusted position content} of $x$ is
$$
\widetilde{c}(x)=kc(x)+c_i.
$$
Let $\underline{T} = (T_0,\dots, T_{k-1})$ be a tuple of semistandard tableaux such that $T_i$ has shape $\mu^{(i)}$.  A pair of boxes $x,y$ is called an \textbf{LLT inversion pair} if
$$
x<y\ \mathrm{and}\ 0<\widetilde{c}(x)-\widetilde{c}(y)<k.
$$
We denote by $\inv_{\mathrm{LLT}}(\underline{T})$ the number of LLT inversion pairs in $\underline{T}$.

\begin{lem}
\label{lem: LLT inversions}
Consider a $(K,k)$ Dyck path $D$ with a choice $(\underline{b},\underline{s})$ of big and small labels, as in Theorem \ref{thm: perping big main}. Let $\underline{\mu}=(1^{b_k},\ldots,1^{b_1})$ and $c_i=i-k(h_{k-i}+s_{k-i})$ where $h_i$ is given by \eqref{eq: hi shrink}.
Then:
\begin{itemize}
\item There is a bijection between $\WPF(D,\underline{b})$ and the set of semistandard tableaux of shape $\underline{\mu}$.
\item Under this bijection, we have
 $\tdinv_{\bg}(\pi_{\bg})=\Inv_{\mathrm{LLT}}(\underline{T})$.
 \end{itemize}
\end{lem}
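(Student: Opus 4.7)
My plan is to verify the two claims of the lemma by direct computation, which mostly amounts to matching indexing conventions. For the bijection, I would observe that a semistandard tableau of the single-column shape $(1^{b_j})$ is exactly the data of a strictly increasing column of $b_j$ positive integers, i.e.\ a column-strict labeling of the $b_j$ big boxes in column $j$ of $D$. The bijection therefore sends $\pi_\bg\in \WPF(D,\underline{b})$ to the tuple $\underline{T}=(T_0,\ldots,T_{k-1})$ where $T_{k-j}$ records the labels of column $j$'s big boxes read from bottom to top; that $\underline{T}$ has the claimed shape $(1^{b_k},\ldots,1^{b_1})$ follows from the reverse indexing.

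For the statistic match, I would index a big box by its column $j\in\{1,\ldots,k\}$ and its position $r\in\{0,\ldots,b_j-1\}$ counted from the bottom of the big block, then compute which diagonal it lies on. Setting $H_{j-1}=\sum_{i<j}(s_i+b_i)$, such a box has Cartesian coordinates $(j-1,\,H_{j-1}+s_j+r)$, so it lies on the diagonal indexed by $d_j+r$ with $d_j=H_{j-1}+s_j-(n-k+1)(j-1)$. Using the formula \eqref{eq: hi shrink} for $h_j$, a short algebraic check produces the identity $d_j=h_j+s_j+(n-k)$, and hence $d_{j_2}-d_{j_1}=(h_{j_2}+s_{j_2})-(h_{j_1}+s_{j_1})$. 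On the LLT side, the $r$-th box from the bottom of $\mu^{(k-j)}=(1^{b_j})$ has position content $-r$, so the adjusted content equals $\widetilde{c}=-kr+(k-j)-k(h_j+s_j)$. Subtracting and simplifying, I obtain
\[
\widetilde{c}(x)-\widetilde{c}(y)=k\bigl[(r_2-r_1)+(d_{j_2}-d_{j_1})\bigr]+(j_2-j_1).
\]

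The conclusion is then immediate: since $j_1,j_2\in\{1,\ldots,k\}$, the right-hand side lies strictly in $(0,k)$ in exactly two cases. Either the bracket vanishes and $j_1<j_2$, which by the formula for $d_j$ means $x$ is to the left of $y$ on the same diagonal; or the bracket equals $1$ and $j_1>j_2$, which means $x$ is to the right of $y$ and exactly one diagonal lower. These are precisely the two configurations in the definition of an attacking pair in a rational parking function, and the label condition $x<y$ is identical on both sides. Hence $\tdinv_\bg(\pi_\bg)=\inv_{\mathrm{LLT}}(\underline{T})$. I expect the main (and essentially the only) potential obstacle to be carefully verifying the identity $d_j=h_j+s_j+(n-k)$ and the reverse indexing convention $\mu^{(i)}=(1^{b_{k-i}})$ hidden inside the shift $c_i=i-k(h_{k-i}+s_{k-i})$; once these two bookkeeping checks are in hand the statistic match falls out of the single display above.
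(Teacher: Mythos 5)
Your proof is correct and follows essentially the same route as the paper's: both arguments come down to computing the adjusted content of each big box and checking that the inequality $0<\widetilde{c}(x)-\widetilde{c}(y)<k$ singles out exactly the two attacking-pair configurations, with the same label condition on both sides. The only cosmetic difference is that the paper first passes to the shrunken diagram (via Proposition \ref{prop: shrink}) and computes $\widetilde{c}(x)=k-i-kt_x$ in terms of shrunken heights, whereas you work directly with diagonal indices in the $(K,k)$ grid; the identity $d_j=h_j+s_j+(n-k)$ you verify is precisely what makes the two bookkeeping schemes agree.
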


\begin{proof}
The first part is clear, as we match a box in column $C_i$ with the corresponding box in the diagram $\mu^{(k-i)}$ (we recall that SSYT must be strictly increasing in columns and weakly increasing in rows). 

For the second part, we shrink $\pi_{\bg}$ and use the equation $\tdinv_{\bg}(\pi_{\bg})=\Inv(\mathrm{shrink}(\pi_{\bg}))$ from Proposition \ref{prop: shrink}.
To conclude the proof, we need to prove that $\Inv(\mathrm{shrink}(\pi_{\bg}))=\Inv_{\mathrm{LLT}}(\underline{T})$. Indeed, suppose that a box $x$ in column $C_i$ in $\mathrm{shrink}(\pi_{\bg})$ has height $t_x$. Since there are $(h_i-1)$ empty spaces and $s_i$ zeros in column $C_i$ below $x$, the position content of the corresponding box in $\mu^{(k-i)}$ equals $c(x)=-(t_x-h_i-s_i)$, so the adjusted content equals
$$
\widetilde{c}(x)=-k(t_x-h_i-s_i)+c_{k-i}=-k(t_x-h_i-s_i)+(k-i)-k(h_i+s_i)=k-i-kt_x.
$$
Now suppose that $y$ is in column $j$ at height $t_y$. The inequality
$0<\widetilde{c}(x)-\widetilde{c}(y)<k$ can be rewritten as
$0<(j-i)+k(t_y-t_x)<k$, which occurs if either $i<j,t_x=t_y$ or $i>j,t_y=t_x+1$.  Thus $x,y$ form an LLT inversion pair if and only if they contribute to $\tdinv$.
\end{proof}

Given a tuple of diagrams $\underline{\mu}=(\mu^{(0)},\ldots,\mu^{(k-1)})$ and a collection of shifts $\underline{c}=(c_0,\ldots,c_{k-1})$, \cite{HHLRU} associate a partition $\mu=\mu(\underline{\mu},\underline{c})$ such that the $k$-quotient of $\mu$ is $\underline{\mu}$ and the $k$-core of $\mu$ is determined by $\underline{c}$.
We refer to \cite{HHLRU} for more details.
Furthermore, for such $\mu$ \cite{LLT} define an LLT polynomial $G_{\mu}(X;q)$. The following theorem can be used as an alternative definition.

\begin{thm}\cite[Corollary 5.2.4]{HHLRU}
One has
$$
q^{e}G_{\mu(\underline{\mu},\underline{c})}[X;q^{-1}]=\sum_{\underline{T}\in \mathrm{SSYT}(\underline{\mu})}q^{\Inv_{\mathrm{LLT}}(\underline{T})}x^{\underline{T}}
$$
for some exponent $e$.
\end{thm}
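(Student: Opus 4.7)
The plan is to follow the strategy of Haglund--Haiman--Loehr--Remmel--Ulyanov~\cite{HHLRU}. The LLT polynomial $G_\mu(X;q)$ is originally defined as a generating function over column-strict $k$-ribbon tableaux $T$ of shape $\mu$, each weighted by $q^{\mathrm{spin}(T)} x^T$. The first step would be to invoke the Stanton--White (or Littlewood) bijection: column-strict $k$-ribbon tableaux of shape $\mu$ are in bijection with $k$-tuples $\underline{T}=(T_0,\ldots,T_{k-1})$ of ordinary semistandard Young tableaux whose shapes form the $k$-quotient $\underline{\mu}$ of $\mu$. Under this bijection the multiset of entries is preserved, so $x^T = x^{\underline{T}}$ and the $k$-core of $\mu$ (which is determined by $\underline{c}$ via the abacus model) plays no role in the monomial weight.

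The second step, which is the main obstacle, is to identify the spin statistic with the combinatorial $\Inv_{\mathrm{LLT}}$ statistic. Spin sums the quantity $(\mathrm{height}-1)$ over all $k$-ribbons in $T$, where the height is the number of rows a ribbon occupies. One computes spin in the $k$-abacus picture of $\mu$, where each ribbon corresponds to sliding a single bead up along its runner; the spin contribution of that move equals the number of beads on other runners lying strictly between the bead's old and new positions. A direct calculation then shows that this ``jumping beads'' count, summed over all entries, reproduces the number of pairs $(x,y)$ in $\underline{T}$ with $x<y$ and $0 < \widetilde{c}(x)-\widetilde{c}(y) < k$, up to an additive constant depending only on $\underline{\mu}$ and $\underline{c}$ (essentially the total ``potential spin'' of the shape configuration). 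This constant becomes the exponent $e$. The key bookkeeping check is that the shift $c_i$ used to define $\widetilde{c}$ matches the offset of the $i$-th runner in the abacus, which is precisely how the partition $\mu(\underline{\mu},\underline{c})$ is constructed.

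Finally, to pass from $q^{\mathrm{spin}}$ to $q^{\Inv_{\mathrm{LLT}}}$ after inversion of $q$, I would use the fact that $\Inv_{\mathrm{LLT}}$ and spin (suitably normalized) are complementary: on each fixed set of fillings of the same shape $\underline{\mu}$ and content, their sum is a constant $N(\underline{\mu},\underline{c})$ equal to the number of attacking pairs. Consequently,
\[
\sum_{\underline{T}} q^{\mathrm{spin}(\underline{T})}\,x^{\underline{T}} \;=\; q^{N}\sum_{\underline{T}} q^{-\Inv_{\mathrm{LLT}}(\underline{T})}\,x^{\underline{T}},
\]
and substituting $q\mapsto q^{-1}$ together with the choice $e = N - (\text{spin shift constant})$ yields the stated identity. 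The whole argument is essentially abacus bookkeeping, and the nontrivial content is the local-to-global translation of spin into the content-inequality form of $\Inv_{\mathrm{LLT}}$.
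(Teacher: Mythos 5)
This statement is not proved in the paper at all: it is imported verbatim as \cite[Corollary 5.2.4]{HHLRU} and used only as an alternative characterization of $G_\mu$, so there is no internal argument to compare yours against. Your outline is essentially the derivation that HHLRU themselves give: pass from $k$-ribbon tableaux to $k$-tuples of semistandard tableaux on the $k$-quotient via the Stanton--White/Littlewood correspondence (which preserves content, so the monomial weight is untouched), and then translate spin into the inversion statistic on the quotient side, with the $k$-core (equivalently the shifts $\underline{c}$) entering only through the runner offsets on the abacus. So the approach is the right one and matches the source.

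Two caveats about your write-up as a proof rather than a roadmap. First, the entire mathematical content is concentrated in the sentence ``a direct calculation then shows that this jumping-beads count \dots reproduces the number of pairs $(x,y)$ with $x<y$ and $0<\widetilde{c}(x)-\widetilde{c}(y)<k$''; that calculation is precisely the nontrivial proposition in \cite{HHLRU} that Corollary 5.2.4 rests on, and asserting it is not proving it. Second, your closing complementarity claim is imprecise: the affine relation is between spin and inversions with the constant being the maximal spin of the shape (a quantity determined by $\underline{\mu}$ and $\underline{c}$), not the number of attacking pairs, and depending on whether spin is normalized as $h-1$ or $(h-1)/2$ per ribbon a factor of $2$ appears that must be reconciled with the exponent of $q$ in the stated identity. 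Since the exponent $e$ is left unspecified in the statement, these slips do not make the conclusion false, but they are exactly the bookkeeping one has to get right for the identity to hold in the form quoted.
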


\begin{cor}
For $\underline{\mu}$, $\underline{c}$ as in Lemma \ref{lem: LLT inversions} and we have
$$
q^{e}G_{\mu(\underline{\mu},\underline{c})}[X;q^{-1}]=f_{D,\underline{b}}[X;q].
$$
\end{cor}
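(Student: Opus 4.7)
The plan is to derive this corollary as essentially immediate consequence of Lemma \ref{lem: LLT inversions} combined with the preceding theorem from \cite{HHLRU}. All of the substantial combinatorial work has already been carried out in Lemma \ref{lem: LLT inversions}, so the argument will only assemble existing pieces.

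Concretely, I would proceed in three short steps. First, I would unfold the definition
$$
f_{D,\underline{b}}[X;q] = \sum_{\pi_{\bg} \in \WPF(D,\underline{b})} q^{\tdinv_{\bg}(\pi_{\bg})} x^{\pi_{\bg}}.
$$
Second, I would apply the bijection of Lemma \ref{lem: LLT inversions} to transport this sum to one over $\mathrm{SSYT}(\underline{\mu})$. For this I would verify that the monomial $x^{\pi_{\bg}}$ is sent to $x^{\underline{T}}$, which is immediate from the construction of the bijection: a box labeled $\ell$ in column $C_i$ of $\mathrm{shrink}(\pi_{\bg})$ is matched to a box of $\mu^{(k-i)}$ also carrying the label $\ell$, so the multiplicity of each $\ell$ is preserved. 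Combined with the statistic identity $\tdinv_{\bg}(\pi_{\bg})=\Inv_{\mathrm{LLT}}(\underline{T})$ already supplied by the lemma, this yields
$$
f_{D,\underline{b}}[X;q] = \sum_{\underline{T} \in \mathrm{SSYT}(\underline{\mu})} q^{\Inv_{\mathrm{LLT}}(\underline{T})} x^{\underline{T}}.
$$
Third, I would invoke \cite[Corollary 5.2.4]{HHLRU} to recognize the right-hand side as $q^{e} G_{\mu(\underline{\mu},\underline{c})}[X;q^{-1}]$ for the exponent $e$ produced there, completing the proof.

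The only point that warrants any care—and which I view as the main (mild) obstacle—is to confirm that the specific shifts $c_i = i - k(h_{k-i}+s_{k-i})$ coming from Lemma \ref{lem: LLT inversions} produce a genuine partition $\mu(\underline{\mu},\underline{c})$ with the prescribed $k$-core and $k$-quotient in the sense of \cite{HHLRU}, so that $G_{\mu(\underline{\mu},\underline{c})}$ is actually defined. This reduces to checking that the $c_i$ lie in distinct residue classes modulo $k$ and assemble into an admissible $k$-core, which in turn follows from the fact that $(b_1+s_1,\ldots,b_k+s_k)$ records the column heights of a valid $(K,k)$ Dyck path $D$ and hence satisfies the monotonicity constraints inherited from $D$ lying above the diagonal.
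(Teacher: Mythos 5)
Your proposal is correct and is essentially the argument the paper intends: the corollary is stated without an explicit proof precisely because it follows immediately by combining the weight- and statistic-preserving bijection of Lemma \ref{lem: LLT inversions} with \cite[Corollary 5.2.4]{HHLRU}, exactly as you describe.
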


Finally, we can make a connection to parabolic Kazhdan-Lusztig polynomials.

\begin{thm}\cite[Proposition 5.3.1]{HHLRU}
Suppose that $\mu=\mu(\underline{\mu},\underline{c})$ and $\nu$ is the $k$-core of $\mu$. Then for all $\lambda$ one has
$$
q^{\mathrm{smin}(\mu)}\langle G_{\mu}[X,q^2],s_{\lambda}\rangle = P^{-}_{\mu+\rho,\nu+k\lambda+\rho}(q)
$$
where the right hand side is the parabolic Kazhdan-Lusztig polynomial, and $\mathrm{smin}$ is a certain combinatorial statistic.
\end{thm}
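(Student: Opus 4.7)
The plan is to attack this through the $q$-deformed Fock space representation of $U_q(\widehat{\mathfrak{sl}}_k)$, following the framework of Leclerc--Thibon. First I would identify the LLT polynomial $G_\mu[X;q]$ as the symmetric function associated to the standard basis vector $|\mu\rangle$ under the boson-fermion correspondence that sends the Fock space $\mathcal{F}_q$ to (a completion of) $\Lambda(q)$. In this picture, the Schur function $s_\lambda$ corresponds to a dual basis vector with respect to a natural bilinear form, so the inner product $\langle G_\mu[X;q^2], s_\lambda\rangle$ becomes (up to an explicit substitution $q \mapsto q^2$) a single entry of the transition matrix between two bases of the weight space of $\mathcal{F}_q$ indexed by partitions of $n$.

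Next, I would invoke the theorems of Varagnolo--Vasserot (and Kashiwara--Tanisaki) identifying the transition coefficients between the standard basis and Kashiwara's lower global (canonical) basis of $\mathcal{F}_q$ with parabolic Kazhdan--Lusztig polynomials for the affine symmetric group $\widetilde{S}_n$ relative to the parabolic subgroup $S_k$. Under the standard bijection between partitions of $n$ and cosets in $\widetilde{S}_n / S_k$, the $k$-core $\nu$ pins down the coset component while the $k$-quotient $\underline{\mu}$ and the shifts $\underline{c}$ determine the specific representative; this is exactly the content of $\mu = \mu(\underline{\mu},\underline{c})$. The offset by $\rho$ on each side is the usual conversion between Weyl group elements and dominant weights, and the $k\lambda$ summand encodes the choice of $\lambda$ as a translation by $k$ times a weight of $\mathrm{GL}_k$, reflecting the finite Schur function being paired against $G_\mu$.

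The main obstacle will be matching the normalization factor $q^{\mathrm{smin}(\mu)}$. The canonical basis element in $\mathcal{F}_q$ and the LLT polynomial $G_\mu[X;q]$ differ by a monomial power of $q$ arising from the discrepancy between the Lusztig length function on $\widetilde{S}_n$ and the cospin grading on ribbon tableaux used to define $G_\mu$; identifying this shift with $\mathrm{smin}(\mu)$ requires a careful combinatorial computation of lengths of minimal coset representatives. I would handle this by first verifying the identity in the base case where $\mu$ is itself the $k$-core $\nu$ (so $\mathrm{smin}(\mu) = 0$ and the Kazhdan--Lusztig polynomial trivializes), and then propagating through addition of horizontal $k$-ribbons using the Heisenberg subalgebra action on $\mathcal{F}_q$, which on one hand is well understood to intertwine with the $q$-graded action of $k$-strip operators on LLT polynomials, and on the other hand corresponds to translations in the affine Weyl group that shift lengths in a controlled way. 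Once this normalization step is pinned down, the remaining identification on both sides becomes a bookkeeping comparison of indexing conventions.
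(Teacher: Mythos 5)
The paper does not actually prove this statement---it is imported verbatim as \cite[Proposition 5.3.1]{HHLRU}, and the justification given there is exactly the chain you describe: Leclerc--Thibon's realization of the ribbon/LLT functions inside the level-one Fock space of $U_q(\widehat{\mathfrak{sl}}_k)$, combined with the Varagnolo--Vasserot and Kashiwara--Tanisaki identification of the transition matrix between the standard and global (canonical) bases with parabolic Kazhdan--Lusztig polynomials of affine type $A$. Your outline therefore follows essentially the same route as the source; the one step not to underestimate is Leclerc--Thibon's theorem that the vectors obtained by applying the Heisenberg (vertex) operators attached to $s_\lambda$ to the highest-weight vector indexed by the $k$-core are bar-invariant and unitriangular with respect to the standard basis, hence literally equal global basis elements---that is the lemma that converts your ``entry of a transition matrix between two bases'' into a canonical-basis coefficient to which Varagnolo--Vasserot applies.
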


As a consequence, we can interpret the pairing 
$$
\langle f_{D,\underline{b}}[X,q^{-2}],s_{(k-1)^{(n-k)}}\rangle
$$
from Theorem \ref{thm: perping big main} as a parabolic Kazhdan-Lusztig polynomial, up to monomial factors $q^{e}$ and $q^{\mathrm{smin}(\mu)}$. Theorem \ref{thm: perping big main} then implies that this parabolic Kazhdan-Lusztig polynomial is in fact a monomial. 
This can be proved directly as follows.

\begin{lem}
For $\underline{\mu},\underline{c}$ as in Lemma \ref{lem: LLT inversions}, and $\lambda=s_{(k-1)^{(n-k)}}$ the parabolic Kazhdan-Lusztig polynomial $P^{-}_{\mu+\rho,\nu+k\lambda+\rho}(q)$ is a monomial in $q$.
\end{lem}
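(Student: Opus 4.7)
The strategy is to deduce this as a near-immediate consequence of Theorem~\ref{thm: perping big main} by chaining together the two HHLRU results stated just above the lemma. The observation is that both the LLT polynomial $G_\mu[X;q]$ and the parabolic Kazhdan--Lusztig polynomial $P^{-}_{\mu+\rho,\nu+k\lambda+\rho}(q)$ are tied to the pairing $\langle f_{D,\underline{b}}[X;q], s_{(k-1)^{n-k}}\rangle$ through explicit monomial factors in $q$, and this pairing has already been computed to be a single power of $q$.

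Concretely, I would begin by substituting $q\mapsto q^{-2}$ in the corollary $q^{e}G_{\mu}[X;q^{-1}]=f_{D,\underline{b}}[X;q]$ to obtain
\[
G_{\mu}[X;q^{2}] \;=\; q^{2e}\, f_{D,\underline{b}}[X;q^{-2}].
\]
Pair both sides with $s_{(k-1)^{n-k}}$ under the Hall inner product, and invoke Theorem~\ref{thm: perping big main} with the parameter $q$ replaced by $q^{-2}$; since that identity holds in the ring of Laurent polynomials in $q$, the substitution is legitimate. This yields
\[
\langle G_{\mu}[X;q^{2}],\, s_{(k-1)^{n-k}}\rangle \;=\; q^{2e}\cdot q^{-2c_{D,\underline{b}}} \;=\; q^{\,2e-2c_{D,\underline{b}}}
\]
when $\underline{b}$ is admissible (and $0$ otherwise, in which case there is nothing to prove). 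Plugging this into the HHLRU formula $q^{\mathrm{smin}(\mu)}\langle G_{\mu}[X,q^2],s_{\lambda}\rangle = P^{-}_{\mu+\rho,\nu+k\lambda+\rho}(q)$ gives
\[
P^{-}_{\mu+\rho,\nu+k\lambda+\rho}(q) \;=\; q^{\,\mathrm{smin}(\mu)+2e-2c_{D,\underline{b}}},
\]
which is a single monomial in $q$.

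The only items needing verification are routine: one must check that the hypotheses of \cite[Proposition~5.3.1]{HHLRU} apply to our choices of $\underline{\mu}=(1^{b_k},\ldots,1^{b_1})$, shifts $c_i=i-k(h_{k-i}+s_{k-i})$, and rectangular shape $\lambda=(k-1)^{n-k}$, so that the partition $\mu=\mu(\underline{\mu},\underline{c})$ and core $\nu$ are well-defined and of the required form. Neither step presents a real obstacle: the shape data for $\underline{\mu}$ is a tuple of single columns, and the cited HHLRU results are already stated for arbitrary tuples of partitions. If one wished to extract the exact monomial, the only additional work would be bookkeeping on the three exponents $e$, $\mathrm{smin}(\mu)$, and $c_{D,\underline{b}}$; but since the lemma asserts only that the KL polynomial is \emph{some} monomial, this is unnecessary.
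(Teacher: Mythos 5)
Your derivation is logically valid, but it is not the paper's proof, and in context it somewhat defeats the purpose of the lemma. The sentence immediately preceding the lemma already records exactly your implication: Theorem~\ref{thm: perping big main}, combined with the two HHLRU results, shows that the parabolic Kazhdan--Lusztig polynomial is a monomial up to the factors $q^e$ and $q^{\mathrm{smin}(\mu)}$. The lemma is then introduced with the words ``this can be proved \emph{directly} as follows,'' and the paper's proof deliberately avoids Theorem~\ref{thm: perping big main}: it invokes the deep positivity result of Lascoux--Leclerc--Thibon that all coefficients of $P^{-}_{\mu+\rho,\nu+k\lambda+\rho}(q)$ are nonnegative, so that it suffices to check the value at $q=1$ is $1$; and that evaluation follows from \cite[Lemma 3.5]{GG} because $f_{D,\underline{b}}[X;1]$ specializes to $e_{b_1}\cdots e_{b_k}$, whose pairing with $s_{(k-1)^{n-k}}$ is $1$. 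The payoff of the paper's route is stated in the closing remark of the section: an independent proof of the lemma could, if one computed the exponent of $q$ by Kazhdan--Lusztig methods, yield an \emph{alternate} proof of Theorem~\ref{thm: perping big main}; your argument, which takes that theorem as input, cannot serve this role. As a minor point, in the non-admissible case the pairing is $0$ and the zero polynomial is not a monomial, so you should restrict to the admissible setup of Lemma~\ref{lem: LLT inversions} rather than claim ``there is nothing to prove.''
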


\begin{proof}
It is a deep result of \cite{LLT} that  all coefficients of the  parabolic Kazhdan-Lusztig polynomial  are nonnegative, thus it is sufficient to prove that the value of this polynomial at $q=1$ equals 1. 

On the other hand, by the above at $q=1$ we get
$$
\langle f_{D,\underline{b}}[X;1],s_{(k-1)^{(n-k)}}\rangle=1.
$$
This follows from \cite[Lemma 3.5]{GG}, since at $q=1$ the polynomial $f_{D,\underline{b}}$ specializes to the product $e_{b_1}\cdots e_{b_k}$.
\end{proof}

It would be interesting to  compute the power of $q$ in Theorem \ref{thm: perping big main} directly by this method. This would give an alternate proof of this theorem.

\section{Further directions}

In this section, we list some further directions stemming from our work.

    \subsection*{Connection to other formulas} Another (conjectural) formula for the Delta Conjecture symmetric function in terms of a skewing operation has been previously discovered by Bergeron~\cite{BergeronConj}. He conjectured that
    \begin{equation}\label{eq:Bergeron}
    \Delta'_{e_{k-1}}e_n = (e_{n-k}^\perp \otimes \mathrm{Id})\mathcal{E}_n,
    \end{equation}
    where $\mathcal{E}_n$ is (the character of) the space of $m$-variate diagonal harmonic polynomials for sufficiently large $m$, thought of as a $GL_m\times S_n$-module. Here, $e_{n-k}^\perp$ is an operator on the $GL_m$ character of the space, and the skewed $GL_m$ characters yield the $q,t$ coefficients in the expansion into Schur functions. On the other hand, our skewing formula Theorem~\ref{thm:IntroSkewing} for $\Delta'_{e_{k-1}}e_n$ starts with a larger degree symmetric function and uses $s_{(k-1)^{n-k}}$ to reduce it to the correct degree $n$ symmetric function. Thus, it would be interesting to investigate the following.
    
    \begin{question}
        Is there a direct connection between these two skewing formulas? Is there an analogue of \eqref{eq:Bergeron} for the symmetric function $E_{K,k}\cdot 1$?
    \end{question}
    
\subsection*{Approach to the Valley formula}     In this article, we prove that the Rational Shuffle Theorem implies the Rise Delta Theorem. As we mentioned in the introduction, there is another version of the Delta Conjecture due to Haglund, Remmel, and Wilson~\cite{HRW}, called the Valley Delta Conjecture, that remains open. 

\begin{question}
    Can Theorem~\ref{thm:IntroSkewing} be used to prove the Valley Delta Conjecture?
\end{question} 

We expect that the basic structure of our proof still applies to this setting but that the combinatorics involved would be quite different.

\subsection*{Extensions of the skewing formula}
    
     There are many extensions of the skewing formula Theorem~\ref{thm:IntroSkewing} that one could explore for $\Delta'$ and the related $\Theta$ operators of D'Adderio, Iraci, and Vanden Wyngaerd~\cite{Theta}. For example:
     \begin{question}     
 Can $\Delta_{h_\ell}\Delta'_{e_{k-1}} e_n$ or  $\Delta'_{s_\lambda}s_\mu$ be expressed as a skewing operator applied to some evaluation of an Elliptic Hall Algebra operator on $1$?
    \end{question}
    
\bibliographystyle{plain}
\bibliography{refs.bib}

\end{document}